\newcommand{\innerp}[1]{\langle {#1} \rangle}
\newcommand{\abs}[1]{\lvert#1\rvert}
\newcommand\Tr{\text{Tr}}
\newcommand{\E}{{\mathbb E}}
\newcommand{\ve}{{\mathbf e}}
\newcommand{\vz}{{\mathbf z}}
\newcommand{\vt}{{\mathbf t}}
\newcommand{\vu}{{\mathbf u}}
\newcommand{\vv}{{\mathbf v}}
\newcommand{\vx}{{\mathbf x}}
\newcommand{\Disc}{{\rm Disc}}
\newcommand{\vw}{{\mathbf w}}
\newcommand{\vh}{{\mathbf h}}
\newcommand{\R}{{\mathbb R}}
\newcommand{\cS}{{\mathcal S}}
\newcommand{\cD}{{\mathcal D}}
\newcommand{\MC}{{\mathbb C}}
\newcommand{\cC}{{\mathcal C}_0}
\numberwithin{equation}{section}
\theoremstyle{definition}
\newtheorem{definition}{Definition}[section]
\theoremstyle{plain}
\newtheorem{corollary}[definition]{Corollary}
\newtheorem{prop}[definition]{Proposition}
\newtheorem{theorem}[definition]{Theorem}
\newtheorem{lemma}[definition]{Lemma}
\newtheorem{conjecture}[definition]{Conjecture}
\theoremstyle{remark}
\newtheorem{remark}[definition]{Remark}
\date{}
\begin{document}
\baselineskip 18pt
\title[Upper and Lower bounds  for matrix discrepancy]
{Upper and Lower bounds  for matrix discrepancy }
\author{Jiaxin Xie}
\address{LMIB of the Ministry of Education, School of Mathematical Sciences, Beihang University, Beijing, 100191, China}
\email{xiejx@buaa.edu.cn}
\author{Zhiqiang Xu}
\address{LSEC, Inst.~Comp.~Math., Academy of
Mathematics and System Science,  Chinese Academy of Sciences, Beijing, 100091, China \newline
School of Mathematical Sciences, University of Chinese Academy of Sciences, Beijing 100049, China}
\email{xuzq@lsec.cc.ac.cn}
\author{Ziheng Zhu}
\address{LSEC, Inst.~Comp.~Math., Academy of
Mathematics and System Science,  Chinese Academy of Sciences, Beijing, 100091, China \newline
School of Mathematical Sciences, University of Chinese Academy of Sciences, Beijing 100049, China}
\email{zhuziheng@lsec.cc.ac.cn}

\begin{abstract}
The aim of this paper is to study the matrix discrepancy problem.
Assume that $\xi_1,\ldots,\xi_n$ are independent scalar random variables with finite support and $\vu_1,\ldots,\vu_n\in
\MC^d$.
Let $\cC$ be the minimal constant for which the following holds:
\[
 \Disc(\vu_1\vu_1^*,\ldots,\vu_n\vu_n^*; \xi_1,\ldots,\xi_n)\,\,:=\,\,\min_{\varepsilon_1\in \cS_1,\ldots,\varepsilon_n\in \cS_n}\bigg\|\sum_{i=1}^n\mathbb{E}[\xi_i]\vu_i\vu_i^*-\sum_{i=1}^n\varepsilon_i\vu_i\vu_i^*\bigg\|\leq \cC\cdot\sigma,
\]
where $\sigma^2 = \big\|\sum_{i=1}^n \mbox{{\bf Var}}[\xi_i](\vu_i\vu_i^*)^2\big\|$ and $\cS_j$ denotes the support of $\xi_j, j=1,\ldots,n$.
Motivated by the technology developed by Bownik, Casazza, Marcus, and Speegle
\cite{bownik2019improved}, we prove  $\cC\leq 3$. This improves  Kyng, Luh and Song's method with
which $\cC\leq 4$ \cite{kyng2019four}.
For the case where $\{\vu_i\}_{i=1}^n\subset \MC^d$ is a unit-norm tight frame with $ n\leq 2d-1$ and $\xi_1,\ldots,\xi_n$ are independent Rademacher random variables, we  present the exact value of $\Disc(\vu_1\vu_1^*,\ldots,\vu_n\vu_n^*; \xi_1,\ldots,\xi_n)=\sqrt{\frac{n}{d}}\cdot\sigma$, which implies $\cC\geq \sqrt{2}$.
\end{abstract}

\maketitle

\section{Introduction}

\subsection{Problem setup}
\emph{Matrix discrepancy} is an active topic recently, which has numerous applications in
mathematics and computer science (see
\cite{chazelle2001discrepancy,matousek2009geometric,CST14}). It also has  deep
connections with many topics  in mathematics which includes the  Kadison-Singer problem
\cite{Marcus2015InterlacingII,kyng2019four}, the Lyapunov-type theorem
\cite{akemann2014a}  and Spencer's ``six standard deviations" theorem
\cite{SpencerSix} etc.

 Assume that $\xi_1,\ldots,\xi_n$ are independent scalar random variables with finite
 support. For $j=1,\ldots,n$, we use $\cS_j$ to denote the support of $\xi_j$. Let $\mathbf{A}_1,\ldots, \mathbf{A}_n\in \MC^{d\times
 d}$ be Hermitian matrices.
 The aim of matrix discrepancy  is to estimate
 $$
 \Disc(\mathbf{A}_1,\ldots,\mathbf{A}_n; \xi_1,\ldots,\xi_n)\,\,:=\,\,\min_{\varepsilon_1\in \cS_1,\ldots,\varepsilon_n\in \cS_n}\bigg\| \sum_{j=1}^n \varepsilon_j\mathbf{A}_j-\sum_{j=1}^n\E[\xi_j]\mathbf{A}_j \bigg\|,
 $$
 where $\|\cdot\|$ denotes the standard spectral matrix norm.
 We call $\Disc(\mathbf{A}_1,\ldots,\mathbf{A}_n; \xi_1,\ldots,\xi_n)$ the {\em matrix discrepancy}
corresponding to matrices $\mathbf{A}_1,\ldots,\mathbf{A}_n$ and random variables $\xi_1,\ldots,\xi_n$.
In \cite{Oli10} (see also \cite[Theorem 1.2]{Tro12}),  Oliveira employed a probability method to prove that
\begin{equation}\label{eq:tropp}
	\Disc(\mathbf{A}_1,\ldots,\mathbf{A}_n; \xi_1,\ldots,\xi_n) \leq O(\sqrt{\log d})\sigma,
\end{equation}
where  $\xi_1,\ldots,\xi_n$ are independent Rademacher random variables, i.e., uniformly distributed in $\{\pm 1\}$ and $\sigma^2 = \big\|\sum_{i=1}^n \mathbf{A}_i^2\big\|$.


\subsection{Our contribution}

\subsubsection{An improved upper bound for rank-1 matrices}\label{subsection-s-norm}
Under some conditions on the matrices $\{\mathbf{A}_i\}_{i=1}^n$,
it is possible to remove the factor $ \sqrt{\log d}$  on the right side of (\ref{eq:tropp}).
Recently, Kyng,
Luh and Song \cite{kyng2019four} considered the case of Hermitian matrices $\mathbf{A}_1,\ldots,\mathbf{A}_n$ with ${\rm rank}(\mathbf{A}_i)=1$ for $1\leq i\leq n$.
Let $\cC$ be the minimal constant  satisfying
\begin{equation}\label{eq:c0}
\Disc(\vu_1\vu_1^*,\ldots,\vu_n\vu_n^*; \xi_1,\ldots,\xi_n)\leq \cC\cdot\sigma,
\end{equation}
for all the vectors $\vu_1,\ldots,\vu_n\in\mathbb{C}^d$ and independent random variables $\xi_1,\ldots,\xi_n$ with finite support, where $\sigma^2 = \big\|\sum_{i=1}^n \mbox{{\bf Var}}[\xi_i](\vu_i\vu_i^*)^2\big\|$.
In \cite{kyng2019four}, Kyng, Luh and Song showed that one can take $\cC\leq 4$.

\begin{theorem}[{\cite[Theorem 1.4]{kyng2019four}}]\label{th:old}
	Suppose that $\xi_1,\ldots,\xi_n$ are independent scalar random variables with finite
	support.  Let $\vu_1,\ldots,\vu_n\in\mathbb{C}^d$ and
	$$
	\sigma^2 = \bigg\|\sum_{i=1}^n \mbox{{\bf Var}}[\xi_i](\vu_i\vu_i^*)^2\bigg\|.
	$$
	Then
	\begin{equation} \label{eq:old}
		\Disc(\vu_1\vu_1^*,\ldots,\vu_n\vu_n^*;\,\, \xi_1,\ldots,\xi_n)\,\, \leq\,\, 4\cdot\sigma.
	\end{equation}
\end{theorem}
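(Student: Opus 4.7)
I would attack the theorem via the method of interlacing polynomials of Marcus, Spielman and Srivastava (MSS), adapted to random variables with general finite support. First I would recenter by setting $\eta_i := \xi_i - \mathbb{E}[\xi_i]$, so the goal becomes producing $e_i$ in the translated supports $\cS_i - \mathbb{E}[\xi_i]$ making $\bigl\|\sum_i e_i \vu_i \vu_i^*\bigr\| \le 4\sigma$. Since the spectral norm controls eigenvalues of both signs, after a standard symmetrization/lifting (either a $2d\times 2d$ block doubling or running two sibling interlacing families simultaneously), the problem reduces to bounding the largest eigenvalue $\lambda_{\max}\bigl(\sum_i e_i \vu_i\vu_i^*\bigr)$.

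\textbf{Interlacing family.} For each partial assignment $(e_1,\dots,e_k) \in (\cS_1 - \mathbb{E}[\xi_1]) \times \cdots \times (\cS_k - \mathbb{E}[\xi_k])$, define
$$p_{e_1,\dots,e_k}(x) \;:=\; \mathbb{E}_{\eta_{k+1},\dots,\eta_n}\det\Bigl[xI - \sum_{j\le k} e_j\, \vu_j\vu_j^* - \sum_{j>k}\eta_j\, \vu_j\vu_j^*\Bigr].$$
I would verify that these polynomials form an interlacing family indexed by $\prod_i (\cS_i - \mathbb{E}[\xi_i])$; the required real-rootedness of convex combinations of siblings follows from a Borcea--Brändén real-stability argument applied to the multivariate determinantal polynomial $\det\bigl[xI - \sum_i z_i\, \vu_i\vu_i^*\bigr]$, together with preservation of stability under real specialization and under convex combinations. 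The MSS conclusion then produces a leaf $(e_1^*,\dots,e_n^*)$ whose largest root is at most the largest root of $p_\varnothing$, so the task reduces to bounding this root by $4\sigma$.

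\textbf{Barrier analysis.} I would encode each expectation as a differential operator on $z_i$: with $m_{i,k} := \mathbb{E}[\eta_i^k]$,
$$\mathbb{E}_{\eta_i}[g(z_i)]\big|_{z_i = 0} \;=\; \sum_{k \ge 0}\frac{m_{i,k}}{k!}\,\partial_{z_i}^k g(0),$$
and applying these operators successively to $\det\bigl[xI - \sum_j z_j \vu_j\vu_j^*\bigr]$ recovers $p_\varnothing(x)$. I would then run the MSS multivariate barrier: maintain an upper bound $u$ on the largest root together with the upper-barrier function $\Phi^u(z) := \tr\bigl[(uI - \sum_j z_j \vu_j\vu_j^*)^{-1}\bigr]$, and show that each elimination of a variable $z_i$ admits a shift $u \mapsto u - \delta_i$ that does not increase the barrier, where $\delta_i$ is controlled by the local rank-one contribution $\mathrm{Var}[\xi_i](\vu_i\vu_i^*)^2$ to the variance. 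Summing the shifts via $\bigl\|\sum_i \mathrm{Var}[\xi_i](\vu_i\vu_i^*)^2\bigr\|\le \sigma^2$ and a Cauchy--Schwarz-type aggregation yields a total shift of order $\sigma$; with an initial barrier $u_0$ of order $\sigma$, the final largest-root bound is $4\sigma$.

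\textbf{Main obstacle.} The delicate point is the barrier update for an $\eta_i$ with arbitrary finite support rather than a Bernoulli. Higher-order derivatives $\partial_{z_i}^k$ for $k \ge 2$ appear in the expectation, each producing correction terms that must still be absorbed into a single scalar shift $u \mapsto u - \delta_i$ without losing the barrier inequality. The constant $4$ emerges only after balancing the initial barrier value against the accumulated shifts, and any tightening — to $3$, as this paper achieves — requires sharper control of exactly these shifts, which is where the refinement of Bownik, Casazza, Marcus and Speegle enters.
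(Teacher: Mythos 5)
Your overall strategy (MSS interlacing families plus a multivariate barrier argument) is indeed the framework behind Kyng--Luh--Song's theorem and behind this paper's sharpening of it, but as written the proposal has two genuine gaps. First, the family you actually define is one-sided and carries no information: since $\det[x\mathbf{I}-\sum_j e_j\vu_j\vu_j^*-\sum_{j>k}\eta_j\vu_j\vu_j^*]$ is affine in each $\eta_j$ (rank-one update, Lemma \ref{lemma:maxdet}) and the $\eta_j$ are independent with mean zero, your root polynomial is $p_\varnothing(x)=\det[x\mathbf{I}]=x^d$, so the interlacing machinery would only yield a leaf with $\sum_j e_j\vu_j\vu_j^*\preceq 0$ --- which is trivially achievable by taking each $\varepsilon_j$ to be the minimum of $\cS_j$ and says nothing about the norm. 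All of the content therefore sits in the ``standard symmetrization/lifting'' you defer: but the $2d\times 2d$ block doubling replaces the PSD rank-one matrices $\vu_i\vu_i^*$ by indefinite rank-two blocks, so the stability input (Lemma \ref{le:sta1} requires PSD coefficient matrices) no longer applies as stated, and running two sibling families ``simultaneously'' fails because the leaf that is good for $\lambda_{\max}$ need not be good for $\lambda_{\min}$. The actual resolution, which is the nontrivial step of \cite{kyng2019four} quoted here as Lemma \ref{lemma:defineinterfaimily}, is to work with $\prod_j\mathbb{P}(\xi_j=\varepsilon_j)\det\big[x^2\mathbf{I}-\big(\sum_i(\E[\xi_i]-\varepsilon_i)\vv_i\vv_i^*\big)^2\big]$, i.e.\ the product of the characteristic polynomials of $\pm M$, and to prove that this specific family is interlacing; nothing about this is routine symmetrization.

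Second, the ``main obstacle'' you flag --- absorbing higher-moment terms $m_{i,k}$, $k\geq 3$, into the barrier shifts --- is a phantom in the rank-one setting, and your sketch lacks the identity that makes it disappear. Because $\det[x\mathbf{I}+\sum_i z_i\tau_i\vv_i\vv_i^*]$ is affine in each $z_i$, its square is quadratic in each $z_i$ (Lemma \ref{lemma:Qisquadratic}), so taking the expectation kills every moment beyond the second, and the expected polynomial collapses exactly to $p_\varnothing(x)=\prod_i\big(1-\tfrac12\partial_{z_i}^2\big)\big|_{z_i=0}\det[x\mathbf{I}+\sum_i z_i\tau_i\vv_i\vv_i^*]^2$ with $\tau_i=\sqrt{\mathbf{Var}[\xi_i]}$, as in \eqref{eq:pemp}. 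Without this identity you have no concrete polynomial on which to run the barrier, and your account of where the constant comes from (an initial barrier ``of order $\sigma$'' plus a Cauchy--Schwarz aggregation of shifts) does not match how the bound is actually obtained: after normalizing $\sigma=1$ one shows that the largest root of this fixed expected polynomial is at most an absolute constant ($4$ in \cite{kyng2019four}; $3$ in Theorem \ref{lemma:estimatethemaxroot} here, via the quadratic structure and mixed discriminants). So the plan needs both the two-sided squared formulation and the variance-only reduction before the barrier stage can even be set up.
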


As mentioned in \cite{kyng2019four}, this theorem  strengthens the Kadison-Singer theorem due to Marcus, Spielman and Srivastava \cite{Marcus2015InterlacingII}.  Kyng, Luh and
Song also used Theorem \ref{th:old} to
improve the original Lyapunov-type theorem in \cite{akemann2014a}, which says that the image of $[0,1]^n$ under the map $$\Phi:(t_1,\ldots,t_n)\mapsto\sum\limits_{i=1}^nt_i\vu_i\vu_i^*$$
can be
approximated by the image of its vertex set $\{0,1\}^n$.
\begin{corollary}[{\cite[Corollary 1.7]{kyng2019four}}]\label{co:old}
	Let $\vu_1,\ldots,\vu_n\in\mathbb{C}^d$ which satisfy
	$\|\sum_{i=1}^n \vu_i\vu_i^*\|\leq 1$ and $\|\vu_i\|^2\leq\epsilon$ for all $i$.
	Then for any $t_i\in [0,1], 1\leq i \leq n$, there exists  a  subset
	$S\subset \{ 1,2,\ldots,n\}$ such that
	\begin{equation*}
		\bigg\| \sum_{i\in S}\vu_i\vu_i^*-\sum_{i=1}^n t_i\vu_i\vu_i^*\bigg\|\,\,\leq\,\, 2\sqrt{\epsilon}.
	\end{equation*}
\end{corollary}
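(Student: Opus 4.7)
The plan is to derive Corollary \ref{co:old} as a direct application of Theorem \ref{th:old} by choosing the random variables $\xi_i$ to be independent Bernoulli variables whose expectations match the given coefficients $t_i$.

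Concretely, for each $i$ I take $\xi_i$ supported on $\cS_i = \{0,1\}$ with $\PP(\xi_i = 1) = t_i$ and $\PP(\xi_i = 0) = 1 - t_i$, so that $\E[\xi_i] = t_i$ and $\text{Var}[\xi_i] = t_i(1-t_i) \le 1/4$. Any choice of $\varepsilon_i \in \cS_i = \{0,1\}$ in the definition of $\Disc$ corresponds to selecting a subset $S = \{i : \varepsilon_i = 1\} \subset \{1,\ldots,n\}$, and for this choice the matrix inside the norm is exactly $\sum_{i\in S}\vu_i\vu_i^* - \sum_{i=1}^n t_i \vu_i\vu_i^*$. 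Thus the conclusion of the corollary will follow once I show that Theorem \ref{th:old} gives the right-hand side $2\sqrt{\epsilon}$.

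The key computation is to estimate $\sigma^2 = \bigl\|\sum_{i=1}^n \text{Var}[\xi_i](\vu_i\vu_i^*)^2\bigr\|$. Using the rank-one identity $(\vu_i\vu_i^*)^2 = \|\vu_i\|^2\,\vu_i\vu_i^*$, I can write
\[
\sigma^2 \;=\; \bigg\|\sum_{i=1}^n t_i(1-t_i)\,\|\vu_i\|^2\,\vu_i\vu_i^*\bigg\|.
\]
Since $t_i(1-t_i) \le 1/4$ and $\|\vu_i\|^2 \le \epsilon$, the scalar coefficient of each $\vu_i\vu_i^*$ is at most $\epsilon/4$, so by the hypothesis $\bigl\|\sum_i \vu_i\vu_i^*\bigr\| \le 1$ I obtain $\sigma^2 \le \epsilon/4$, i.e.\ $\sigma \le \sqrt{\epsilon}/2$. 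Applying Theorem \ref{th:old} then yields a subset $S$ with
\[
\bigg\|\sum_{i\in S}\vu_i\vu_i^* - \sum_{i=1}^n t_i\vu_i\vu_i^*\bigg\| \;\le\; 4\sigma \;\le\; 2\sqrt{\epsilon},
\]
which is the desired bound.

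There is no real obstacle here: the proof is essentially a one-line reduction, and the only substantive step is the algebraic simplification $(\vu_i\vu_i^*)^2 = \|\vu_i\|^2 \vu_i\vu_i^*$ followed by pulling out the uniform bounds on $t_i(1-t_i)$ and $\|\vu_i\|^2$. The constant $2$ in the statement of the corollary arises precisely as $4 \cdot \tfrac{1}{2}$, where the $4$ is inherited from Theorem \ref{th:old} and the $\tfrac{1}{2}$ comes from $\sqrt{1/4}$; in particular, any improvement to the constant $\cC$ in \eqref{eq:c0} immediately translates into a corresponding improvement in the Lyapunov-type bound.
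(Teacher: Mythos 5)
Your proof is correct and follows essentially the same reduction the paper uses (for its improved version with Theorem \ref{th:main}): take $\{0,1\}$-valued $\xi_i$ with $\E[\xi_i]=t_i$, note $\mbox{{\bf Var}}[\xi_i]\le 1/4$ and $(\vu_i\vu_i^*)^2=\|\vu_i\|^2\vu_i\vu_i^*$ to get $\sigma^2\le\epsilon/4$, and identify choices in the support with subsets $S$. The only difference is cosmetic: you carry out the estimate under the weaker hypothesis $\|\sum_i\vu_i\vu_i^*\|\le 1$ rather than $\sum_i\vu_i\vu_i^*=\mathbf{I}$, which is exactly what the corollary requires.
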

One of  aims of this paper is to improve the bound $4\cdot\sigma$ in (\ref{eq:old}) to
$3\cdot\sigma$ showing that one can take $\cC\leq 3$. To do that, we have to employ the
technology developed in \cite{bownik2019improved}. Particularly, we have
\begin{theorem}\label{th:main}
	Under the assumptions of  Theorem \ref{th:old}, we have
	\begin{equation} \label{eq:new}
		\Disc(\vu_1\vu_1^*,\ldots,\vu_n\vu_n^*;\,\, \xi_1,\ldots,\xi_n)\,\, \leq\,\, 3\cdot\sigma.
	\end{equation}
\end{theorem}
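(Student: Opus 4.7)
My plan is to follow the interlacing-families framework of Kyng-Luh-Song~\cite{kyng2019four} but replace their root-bound step with the sharper multivariate barrier analysis of Bownik-Casazza-Marcus-Speegle~\cite{bownik2019improved}; it is this replacement that sharpens $4\sigma$ to $3\sigma$. To obtain a two-sided norm bound from a one-sided root bound, I work with the symmetrized polynomial
\[
q_\varepsilon(x) \,:=\, \det(xI - M_\varepsilon)\det(xI + M_\varepsilon), \qquad M_\varepsilon := \sum_{i=1}^n (\varepsilon_i - \E[\xi_i])\vu_i\vu_i^*,
\]
whose largest real root equals $\|M_\varepsilon\|$. A Marcus-Spielman-Srivastava interlacing argument, built along the product tree of supports $\cS_1 \times \cdots \times \cS_n$, produces a leaf $\varepsilon$ whose $q_\varepsilon$ has largest root no bigger than that of the expected polynomial $\E_\xi[q_\xi(x)]$. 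It therefore suffices to bound the largest root of the latter by $3\sigma$.

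Next I derive an operator representation for this expected polynomial. Because each $\vu_i\vu_i^*$ has rank one, the polynomials $P(z) := \det(xI - \sum_j z_j\vu_j\vu_j^*)$ and $Q(w) := \det(xI + \sum_j w_j\vu_j\vu_j^*)$ are multi-affine in $z$ and $w$. Hence for any scalars $\eta_1,\dots,\eta_n$,
\[
\det\!\bigl(xI - \textstyle\sum_i \eta_i \vu_i\vu_i^*\bigr)\det\!\bigl(xI + \textstyle\sum_i \eta_i \vu_i\vu_i^*\bigr) \,=\, \prod_{i=1}^n (1+\eta_i\partial_{z_i})(1+\eta_i\partial_{w_i})\,P(z)Q(w)\Big|_{z=w=0}.
\]
Setting $\eta_i := \xi_i - \E[\xi_i]$ and taking expectation, all terms linear in a single $\eta_i$ vanish (since $\E[\eta_i]=0$), and the $\eta_i^2\partial_{z_i}\partial_{w_i}$ cross-term contributes $\text{Var}[\xi_i]\partial_{z_i}\partial_{w_i}$, giving
\[
\E_\xi[q_\xi(x)] \,=\, \prod_{i=1}^n \bigl(1 + \text{Var}[\xi_i]\,\partial_{z_i}\partial_{w_i}\bigr)\,P(z)Q(w)\Big|_{z=w=0}.
\]

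The main obstacle is the final step: bounding the largest root of this polynomial by $3\sigma$ via a multivariate barrier method. After verifying that $P(z)Q(w)$ is real-stable in $(x, z, w)$ and that each operator $1 + \text{Var}[\xi_i]\partial_{z_i}\partial_{w_i}$ preserves real-stability on the relevant multi-affine class (a Borcea-Br\"and\'en-type check), one tracks a barrier potential $\Phi(x, z, w)$ through successive applications of the operators. Kyng-Luh-Song use a single MSS-style barrier and obtain per-step shifts in $x$ that sum to $4\sigma$. The BCMS refinement maintains an auxiliary second potential whose monotonicity under each operator produces enough slack to tighten each shift; starting at $(x_0, 0, 0)$ with $x_0$ tuned so that the initial barriers are of order $\sigma^{-1}$, the aggregate shift adds up to at most $3\sigma$. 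The main technical hurdle is transcribing the BCMS barrier inequalities, originally tailored to the single-derivative operator $1 - \partial_{z_i}$ coming from Bernoulli variables, into mixed-partial analogues compatible with $1 + \text{Var}[\xi_i]\partial_{z_i}\partial_{w_i}$; this requires rederiving their monotonicity estimates with $\partial_{z_i}\partial_{w_i}$ in place of a single $\partial_{z_i}$. Once these analogues are in hand, the argument closes as in Kyng-Luh-Song, and combined with the interlacing reduction above this yields the claimed $3\sigma$ bound.
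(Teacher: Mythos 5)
Your setup is sound and matches the paper's: the symmetrized polynomial $q_\varepsilon$, the interlacing reduction to the expected polynomial, and the operator representation are all correct. Indeed your identity $\E_\xi[q_\xi(x)]=\prod_i\bigl(1+\mathrm{Var}[\xi_i]\,\partial_{z_i}\partial_{w_i}\bigr)P(z)Q(w)\big|_{z=w=0}$ is equivalent, after restricting to the diagonal $w=z$ and rescaling by $\tau_i=\sqrt{\mathrm{Var}[\xi_i]}$, to the representation the paper takes from Kyng--Luh--Song (Lemma \ref{lemma:defineinterfaimily}): $p_\emptyset(x)=\prod_i(1-\tfrac12\partial_{z_i}^2)\big|_{z_i=0}\det[x\mathbf{I}+\sum_i z_i\tau_i\vv_i\vv_i^*]^2$, where the key structural point is that the squared determinant is \emph{quadratic} in each $z_i$ (Lemma \ref{lemma:Qisquadratic}). (Minor caution: with your sign conventions $P(z)Q(w)$ is real stable in $(x,-z,w)$, not in $(x,z,w)$; this is fixable but should be stated correctly before invoking stability-preservation.)

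The genuine gap is that the step which actually produces the constant $3$ is only described, not proved. You write that the BCMS monotonicity estimates must be ``rederived'' for the operator $1+\mathrm{Var}[\xi_i]\partial_{z_i}\partial_{w_i}$ and that afterwards ``the argument closes''; but that rederivation \emph{is} the content of the improvement from $4\sigma$ to $3\sigma$, and nothing in your proposal supplies it. Concretely, what is missing is: (i) the quadratic-barrier monotonicity statement — in the paper, Lemma \ref{Lemma:QuadraticBarrier}, proved via the mixed-discriminant inequality $\tilde D(\hat{\mathbf{A}})\tilde D(\hat{\mathbf{L}})\geq\tilde D(\hat{\mathbf{A}},\hat{\mathbf{L}})$ and the BCMS quadratic lemma, which shows that applying $1-\tfrac12\partial_x^2$ and shifting the corresponding coordinate by $\delta$ does not increase the other barriers provided $\Phi_p^x\le\delta/(1-\delta^2)$; (ii) the induction that propagates this across all $n$ coordinates while staying above the roots (Lemma \ref{Lemma:mainlemma2}, which also needs the bound $\Phi_{Q_k}^{k+1}(\alpha,\vw_{k+1})<\sqrt2$ to keep the shifted point above the roots of $Q_{k+1}$); and (iii) the initial-point computation at $x=3$ (after normalizing $\sigma=1$): one checks $3\mathbf{I}-\sum_i\tau_i^2(\vv_i\vv_i^*)^2\succeq 2\mathbf{I}$ and $\Phi_Q^i(3,\vw_0)\le 2\,\mathrm{Tr}(2^{-1}\tau_i\vv_i\vv_i^*)=\delta_i$, which is exactly where the number $3$ comes from. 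Your description of the mechanism is also slightly off: in both KLS and this paper the point is shifted in the auxiliary variables $z_i$ (from $-\delta_i$ up to $0$) with $x$ held fixed, and the constant arises from balancing the initial spectral slack against the barrier hypothesis of the monotonicity lemma — not from ``per-step shifts in $x$ that sum to $4\sigma$'' or $3\sigma$. Without items (i)--(iii) the proposal establishes only the (known) reduction, not the bound \eqref{eq:new}.
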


Similarly with  \cite{kyng2019four}, we take $\xi_1,\ldots,\xi_n$ as independent
$\{0,1\}$-valued  random variables with $\mathbb{E}[\xi_i] =t_i\in[0,1]$. Then
$\mbox{{\bf Var}}[\xi_i]=t_i(1-t_i)\leq \frac{1}{4}$. If
$\max_i\|\vu_i\|^2\leq\epsilon$ and $\sum\limits_{i=1}^n\vu_i\vu_i^*=\mathbf{I}$, then we have
$\sigma^2\leq \epsilon/4$. Thus by Theorem \ref{th:main}, we have the following
improved Lyapunov-type  theorem \cite{akemann2014a}. This, in turn, improves
Corollary \ref{co:old}.
\begin{corollary}
	Let $\vu_1,\ldots,\vu_n\in\mathbb{C}^d$ which satisfy
	$\|\sum_{i=1}^n \vu_i\vu_i^*\|\leq 1$ and $\|\vu_i\|^2\leq\epsilon$ for all $i$.
	Then for any $t_i\in [0,1], 1\leq i \leq n$, there exists  a  subset
	$S\subset \{ 1,2,\ldots,n\}$ such that
	\begin{equation*}
		\bigg\| \sum_{i\in S}\vu_i\vu_i^*-\sum_{i=1}^n t_i\vu_i\vu_i^*\bigg\|\,\,\leq\,\, \frac{3}{2}\sqrt{\epsilon}.
	\end{equation*}
\end{corollary}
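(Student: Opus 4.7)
The plan is to derive this corollary as a direct specialization of Theorem~\ref{th:main}, following verbatim the recipe outlined in the paragraph immediately preceding the statement. All the ingredients are on the table; the only task is to choose the random variables, compute $\sigma$, and convert the resulting signs into a subset.

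First, I would introduce independent random variables $\xi_1,\ldots,\xi_n$ with $\xi_i$ supported on $\cS_i=\{0,1\}$ and $\PP(\xi_i=1)=t_i$. Then $\E[\xi_i]=t_i$ and $\mbox{{\bf Var}}[\xi_i]=t_i(1-t_i)\leq 1/4$. Next I would bound the variance proxy $\sigma$. Since $\vu_i\vu_i^*$ is rank one, $(\vu_i\vu_i^*)^2=\|\vu_i\|^2\,\vu_i\vu_i^*$, so
\[
\sigma^2 \;=\; \bigg\|\sum_{i=1}^n \mbox{{\bf Var}}[\xi_i]\,(\vu_i\vu_i^*)^2\bigg\|
\;=\; \bigg\|\sum_{i=1}^n \mbox{{\bf Var}}[\xi_i]\,\|\vu_i\|^2\,\vu_i\vu_i^*\bigg\|.
\]
Using $\mbox{{\bf Var}}[\xi_i]\leq 1/4$ and $\|\vu_i\|^2\leq \epsilon$ pointwise, monotonicity of the operator norm on the positive semidefinite cone gives $\sigma^2 \leq \frac{\epsilon}{4}\,\|\sum_{i=1}^n \vu_i\vu_i^*\|\leq \epsilon/4$, hence $\sigma \leq \tfrac{1}{2}\sqrt{\epsilon}$.

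Finally, I would invoke Theorem~\ref{th:main} to produce signs $\varepsilon_i\in\cS_i=\{0,1\}$ such that
\[
\bigg\|\sum_{i=1}^n \varepsilon_i\,\vu_i\vu_i^* - \sum_{i=1}^n t_i\,\vu_i\vu_i^*\bigg\| \;\leq\; 3\sigma \;\leq\; \tfrac{3}{2}\sqrt{\epsilon},
\]
and then take $S=\{i:\varepsilon_i=1\}\subseteq\{1,\ldots,n\}$, which turns the $\varepsilon_i$-weighted sum into $\sum_{i\in S}\vu_i\vu_i^*$ and yields the desired inequality.

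There is no genuine obstacle here: the entire argument is a clean substitution into Theorem~\ref{th:main}, and the improvement over Corollary~\ref{co:old} (from $2\sqrt{\epsilon}$ to $\tfrac{3}{2}\sqrt{\epsilon}$) is inherited in a one-to-one fashion from the improvement of the constant $\cC$ from $4$ to $3$. The only step requiring any care is the identity $(\vu_i\vu_i^*)^2=\|\vu_i\|^2\vu_i\vu_i^*$ together with the operator monotonicity argument used to pull $\|\vu_i\|^2\leq\epsilon$ and $\mbox{{\bf Var}}[\xi_i]\leq 1/4$ outside of the norm.
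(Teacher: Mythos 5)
Your proposal is correct and follows essentially the same route as the paper: the authors also take independent $\{0,1\}$-valued $\xi_i$ with $\mathbb{E}[\xi_i]=t_i$, note $\mbox{{\bf Var}}[\xi_i]\le 1/4$ and $\sigma^2\le\epsilon/4$, and apply Theorem \ref{th:main}, reading the resulting $\{0,1\}$-assignment as the subset $S$. Your handling of the hypothesis $\|\sum_{i=1}^n\vu_i\vu_i^*\|\le 1$ (rather than exact equality to $\mathbf{I}$) via the identity $(\vu_i\vu_i^*)^2=\|\vu_i\|^2\vu_i\vu_i^*$ and PSD monotonicity is a faithful and slightly more carefully stated version of the paper's sketch.
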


\subsubsection{Matrix discrepancy for tight frame.}
In this subsection, we consider the case where $\{\vu_i\}_{i=1}^n$ form a tight frame in ${\mathbb C}^d$.
 We say that 	vectors  $\{\vu_1,\ldots,\vu_n\}\subset\mathbb{C}^d$ form a  \emph{tight frame} in
$\mathbb{C}^d$ if there exists a constant $C>0$ such that
\[
\sum\limits_{i=1}^n\vu_i\vu_i^* = C \cdot \mathbf{I},
\] where $C$ is called the \emph{frame bound}.  For more details about tight frame, we refer the reader to \cite{Wal18}.
Futhermore, if unit vectors $\{\vu_1,\ldots,\vu_n\}\subset \mathbb{C}^d$ form a tight frame with frame bound $\frac{n}{d}$, i.e.
\begin{equation}\label{eq:FNTF}
\sum\limits_{i=1}^n\vu_i\vu_i^*=\frac{n}{d}\cdot\mathbf{I}\quad \text{and} \quad\|\vu_i\|=1\,\, \text{for}\,\, 1\leq i\leq n,
\end{equation}
then we call  that the  vectors $\{\vu_1,\ldots,\vu_n\}$ form a \emph{unit-norm tight frame} in $\mathbb{C}^d$.
For the exsistence and characterization of unit-norm tight frames, we refer the reader to \cite{GKK01,zim01,bf03}.
We next present an upper bound for $\Disc(\mathbf{u}_1\mathbf{u}_1^*,\ldots,\mathbf{u}_n\mathbf{u}_n^*;\xi_1,\ldots,\xi_n)$
 under the assumption of $\{\vu_1,\ldots,\vu_n\}$  being a tight frame and an exact value for $\{\vu_1,\ldots,\vu_n\}$  being a unit-norm tight frame.
\begin{theorem}\label{thm:sharp}
	Suppose that $d$ and $n$ are two positive integers  and $\xi_1,\ldots,\xi_n$ are independent Rademacher  random variables.
	Suppose that  $\{\vu_1,\ldots,\vu_n\} \subset \mathbb{C}^d$  is a tight frame in ${\mathbb C}^d$.
	Let
	$$
	\sigma^2 = \bigg\|\sum_{i=1}^n (\vu_i\vu_i^*)^2\bigg\|.
	$$
	Then
	\begin{equation}\label{eq:tf}
	\Disc(\mathbf{u}_1\mathbf{u}_1^*,\ldots,\mathbf{u}_n\mathbf{u}_n^*;\xi_1,\ldots,\xi_n)\leq \sqrt{\frac{n}{d}}\cdot\sigma.
	\end{equation}
Particularly, if $\{\vu_1,\ldots,\vu_n\} \subset \mathbb{C}^d$  form a unit-norm tight frame and $d\leq n\leq 2d-1$, then
	\begin{equation}\label{eq:utf}
	 	\Disc(\mathbf{u}_1\mathbf{u}_1^*,\ldots,\mathbf{u}_n\mathbf{u}_n^*;\xi_1,\ldots,\xi_n)= \sqrt{\frac{n}{d}}\cdot\sigma.
	\end{equation}

\end{theorem}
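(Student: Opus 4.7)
My plan is to exploit the tight-frame identity $\sum_{i=1}^n \vu_i\vu_i^* = C\mathbf{I}$ twice --- once to obtain a uniform spectral bound valid for every sign pattern, and once to pin down the extremal behavior in the unit-norm case. Since $\xi_1,\ldots,\xi_n$ are Rademacher we have $\mathbb{E}[\xi_i] = 0$, so for any $\varepsilon\in\{\pm 1\}^n$, setting $S=\{i:\varepsilon_i=1\}$, the decomposition
\begin{equation*}
\sum_{i=1}^n \varepsilon_i\vu_i\vu_i^* \;=\; 2X_S - C\mathbf{I}, \qquad X_S := \sum_{i\in S}\vu_i\vu_i^*,
\end{equation*}
together with $0 \preceq X_S \preceq C\mathbf{I}$ already gives the uniform upper bound $\bigl\|\sum_i \varepsilon_i \vu_i\vu_i^*\bigr\| \leq C$, and in particular $\Disc \leq C$.

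For \eqref{eq:tf} it then only remains to prove $C \leq \sqrt{n/d}\,\sigma$. Taking traces of the tight-frame relation gives $Cd = \sum_i \|\vu_i\|^2$; Cauchy--Schwarz on this sum gives $(Cd)^2 \leq n \sum_i \|\vu_i\|^4$; and
\begin{equation*}
\sum_i \|\vu_i\|^4 \;=\; \tr\Bigl(\sum_i (\vu_i\vu_i^*)^2\Bigr) \;\leq\; d\sigma^2,
\end{equation*}
where the final inequality uses $\tr(A) \leq d\|A\|$ for a positive semidefinite $d\times d$ matrix. Combining these yields $(Cd)^2 \leq nd\sigma^2$, i.e., $C \leq \sqrt{n/d}\,\sigma$, which is \eqref{eq:tf}.

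For the sharpness claim \eqref{eq:utf}, the UNTF hypothesis forces $C = n/d$ and $\|\vu_i\|=1$, hence $\sigma = \sqrt{n/d}$, so the target becomes $\bigl\|2X_S - (n/d)\mathbf{I}\bigr\| \geq n/d$ for \emph{every} $S \subset \{1,\ldots,n\}$. The key observation is that the constraint $n \leq 2d-1$ implies $\min(|S|,|S^c|) \leq d-1$, so at least one of $X_S, X_{S^c}$ is a sum of fewer than $d$ rank-one matrices and is therefore singular as an operator on $\mathbb{C}^d$. The relation $X_S + X_{S^c} = (n/d)\mathbf{I}$ converts singularity of one matrix into an eigenvalue $n/d$ for the other, so in either case $2X_S-(n/d)\mathbf{I}$ has $\pm n/d$ in its spectrum; combined with the universal upper bound from the first paragraph, this yields the exact equality $\bigl\|2X_S-(n/d)\mathbf{I}\bigr\| = n/d$ for every sign pattern, hence $\Disc = n/d = \sqrt{n/d}\,\sigma$.

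The only step that needs real thought is the sharpness argument, and what makes it work is precisely the dimensional threshold $n\leq 2d-1$. Beyond it the equality collapses: two disjoint orthonormal bases form a UNTF with $n=2d$ for which one can choose $X_S=\mathbf{I}$, driving the discrepancy to $0$. So the hypothesis $n\leq 2d-1$ is not an artifact of the proof but encodes exactly the range in which the Rademacher discrepancy saturates the tight-frame bound $C$; this is the content I would want to emphasize in the write-up.
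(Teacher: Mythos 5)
Your proposal is correct and follows essentially the same route as the paper: the tight-frame relation gives the uniform bound $\Disc \leq C$, the Cauchy--Schwarz/trace argument gives $C \leq \sqrt{n/d}\,\sigma$ (the paper's Lemma \ref{lemma-2021-5-5}), and the rank-deficiency of the smaller half of the partition under $n\leq 2d-1$ produces the eigenvalue $\pm n/d$ forcing equality (the paper's Lemma \ref{lemma-lower-321-1}). Your closing remark that $n=2d$ (two orthonormal bases) breaks the equality is a nice observation on the sharpness of the hypothesis, but the core argument matches the paper's.
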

\begin{remark}
A simple observation is that the upper bound (\ref{eq:tf}) is better than the one in Theorem \ref{th:main} provided $n<9d$.
 According to (\ref{eq:utf}), we can derive the constant $\mathcal{C}_0$ defined in (\ref{eq:c0}) is greater than $\sqrt{2-\frac{1}{d}}$ for any positive integer $d$, which implies that $ \mathcal{C}_0\geq \sqrt{2}$.
\end{remark}

 \subsubsection{Matrix discrepancy for diagonal matrices}
We next show that  one can not remove the $\sqrt{\log d}$ factor in (\ref{eq:tropp}) in general.
\begin{prop}\label{pr:lower}
	Suppose that $\xi_1,\ldots,\xi_n$ are independent Rademacher random variables.
	Let $n\in {\mathbb Z}_{\geq 1}$ and $d=2^n$. Then there exist
	$(1,-1)$-diagonal matrices $\mathbf{A}_1,\ldots,\mathbf{A}_n\in \MC^{d\times d}$ such that
	\[
	\Disc(\mathbf{A}_1,\ldots,\mathbf{A}_n;\xi_1,\ldots,\xi_n)\,\,\geq \,\, n.
	\]
\end{prop}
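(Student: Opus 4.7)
The plan is to exploit the fact that for diagonal matrices, the spectral norm is just the maximum absolute diagonal entry, which reduces the matrix discrepancy problem to a combinatorial question about sign patterns.

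Since $\xi_i$ is Rademacher, $\E[\xi_i] = 0$, so
\[
\Disc(\mathbf{A}_1,\ldots,\mathbf{A}_n;\xi_1,\ldots,\xi_n) = \min_{\varepsilon \in \{\pm 1\}^n} \bigg\|\sum_{i=1}^n \varepsilon_i \mathbf{A}_i\bigg\|.
\]
If each $\mathbf{A}_i$ is diagonal with $\pm 1$ entries, write $a_i^{(j)} := (\mathbf{A}_i)_{jj} \in \{\pm 1\}$. Then $\sum_i \varepsilon_i \mathbf{A}_i$ is diagonal with $j$-th entry $\sum_i \varepsilon_i a_i^{(j)}$, so the spectral norm equals $\max_{1\leq j \leq d} |\sum_i \varepsilon_i a_i^{(j)}|$. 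Each such sum lies in $\{-n,-n+2,\ldots,n-2,n\}$, and attains absolute value $n$ precisely when $a_i^{(j)} = \varepsilon_i$ for all $i$, or $a_i^{(j)} = -\varepsilon_i$ for all $i$.

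The strategy is therefore to choose the matrices so that every sign pattern $\varepsilon \in \{\pm 1\}^n$ is realized (up to global sign) as some column of the $d \times n$ matrix $[a_i^{(j)}]$. Concretely, since $d = 2^n$, I would enumerate the $2^n$ elements of $\{\pm 1\}^n$ as rows $r_1,\ldots,r_d$ and define $\mathbf{A}_i$ to be the diagonal matrix whose $(j,j)$-entry is the $i$-th coordinate of $r_j$.

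With this construction, for any choice $\varepsilon \in \{\pm 1\}^n$, there is some row index $j^*$ with $r_{j^*} = \varepsilon$, so $a_i^{(j^*)} = \varepsilon_i$ for every $i$, giving $\sum_i \varepsilon_i a_i^{(j^*)} = n$. Hence $\|\sum_i \varepsilon_i \mathbf{A}_i\| \geq n$ for every sign choice, which yields the desired lower bound.

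There is no real obstacle: the argument is essentially a counting/pigeonhole construction dressed up in matrix language, and the dimension $d = 2^n$ is chosen precisely so that all sign patterns can be accommodated as diagonal positions simultaneously.
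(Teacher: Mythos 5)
Your proof is correct and is essentially identical to the paper's: both enumerate all $2^n$ sign patterns as the diagonal positions, take $\mathbf{A}_i$ to record the $i$-th coordinate of each pattern, and observe that any choice of signs matches some diagonal entry exactly, forcing a diagonal value of $n$. No differences worth noting.
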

\begin{proof}
	We  assume that $\vh\in \MC^n$ is a $n$-dimensional vector whose entries are either
	$1$ or $-1$. We use $\vh_1,\ldots,\vh_d$ to denote all these possible vectors where
	$d=2^n$. For $i\in \{1,\ldots,n\}$, we take $\mathbf{A}_i={\rm
		Diag}(\vh_{1,i},\vh_{2,i},\ldots,\vh_{d,i})\in \MC^{d\times d}$, where we use
	$\vh_{k,i}$ to denote the $i$-th entry of $\vh_k$. Assume that
	$\varepsilon=(\varepsilon_1,\ldots,\varepsilon_n)$ is an arbitrary vector in $\{1,-1\}^n$.
	Noting that $\vh_1,\ldots\vh_d$ run over all the vectors in $\{1,-1\}^n$.
	Without loss of generality, we assume that
	$\vh_1=\varepsilon$. Then
	\[
	\bigg\|\sum_{i=1}^n\varepsilon_i\mathbf{A}_i\bigg\|\,\,\geq\,\, \abs{\innerp{\vh_1,\varepsilon}}\,\,\geq\,\, n,
	\]
	which implies the conclusion.
\end{proof}
\begin{remark}
	A simple calculation shows that
	\[
	\sigma^2 = \bigg\|\sum_{i=1}^n \mbox{{\bf Var}}[\xi_i]\mathbf{A}_i^2\bigg\|=n,
	\]
	where $\mathbf{A}_i, i=1,\ldots,n$ and $\xi_i, i=1,\ldots,n$ are defined in the proof of
	Proposition \ref{pr:lower}. Hence, if the factor $\sqrt{\log d}$  in
	(\ref{eq:tropp}) is removed, we have
	\[
	\Disc(\mathbf{A}_1,\ldots,\mathbf{A}_n;\xi_1,\ldots,\xi_n)\,\,\leq \,\,O( \sigma ) \,\,=\,\, O(\sqrt{n}),
	\]
	which contradicts with  Proposition  \ref{pr:lower}.
\end{remark}
%

Motivated by the results above, we present the following conjecture:
\begin{conjecture}
	Suppose that $\xi_1,\ldots,\xi_n$ are independent scalar random variables with finite
	support. Suppose that $\mathbf{A}_1,\ldots,\mathbf{A}_n\in \MC^{d\times d}$ are Hermitian matrices with
	${\rm rank}(\mathbf{A}_i)\leq r, i=1,\ldots,n$. Then we have
	\[
	\Disc(\mathbf{A}_1,\ldots,\mathbf{A}_n;\xi_1,\ldots,\xi_n)\,\,\leq \,\,O( \sqrt{C_{r,n}}\cdot \sigma),
	\]
	where $C_{r,n}:=\max\{\log(r/n),1\}$ and  $\sigma^2 = \big\|\sum_{i=1}^n \mbox{{\bf
			Var}}[\xi_i]\mathbf{A}_i^2\big\|$.
\end{conjecture}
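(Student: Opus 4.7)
The plan is to reduce to the rank-one setting handled by Theorem \ref{th:main} via spectral decomposition, and then to extract the $\sqrt{C_{r,n}}$ factor from an iterative partial-coloring argument.

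First, I would write each $\mathbf{A}_i = \sum_{k=1}^{r}\lambda_{i,k}\vu_{i,k}\vu_{i,k}^*$ via its eigendecomposition, rewriting the quantity of interest as
\[
\sum_{i=1}^n(\xi_i-\E[\xi_i])\mathbf{A}_i \;=\; \sum_{i=1}^n\sum_{k=1}^r(\xi_i-\E[\xi_i])\lambda_{i,k}\vu_{i,k}\vu_{i,k}^*.
\]
If the choices $\varepsilon_i\in\cS_i$ were allowed to be made independently for each of the $nr$ rank-one pieces, Theorem \ref{th:main} would immediately give $3\sigma$; meanwhile Oliveira's bound (\ref{eq:tropp}) applied in the subspace of dimension at most $\min(d,nr)$ spanned by the ranges of the $\mathbf{A}_i$ gives $O\bigl(\sqrt{\log\min(d,nr)}\bigr)\cdot\sigma$. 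The conjectured bound is strictly stronger than both whenever $r\le n$ (giving a dimension-free constant) and when $r\gg n$ (giving $\sqrt{\log(r/n)}$ instead of $\sqrt{\log d}$).

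Second, to respect the coupling that all $r$ rank-one pieces sharing index $i$ must carry the common sign $\varepsilon_i$, I would work with a block version of the Bownik--Casazza--Marcus--Speegle interlacing polynomials used to prove Theorem \ref{th:main}: replace the single-vector barrier functional by a suitable product over the $r$ eigen-pieces of $\mathbf{A}_i$, and prove the analogous three-term estimate for the grouped update $\xi_i\mapsto\varepsilon_i$. Specializing $r=1$ recovers the BCMS argument; for general $r$ the grouped barrier should still satisfy a Cauchy-type interlacing identity, so that some realization $\varepsilon\in\cS_1\times\cdots\times\cS_n$ attains the expected top-root bound, yielding a bound that is a constant multiple of $\sigma$ at a controlled price depending on $r$. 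To obtain the correct $\sqrt{C_{r,n}}$ dependence rather than a naive $\sqrt{r}$ loss, I would then iterate the grouped argument in a partial-coloring scheme: each round fixes the signs of a constant fraction of indices at incremental cost $O(\sigma)$, reducing the effective per-block rank by a constant factor, and terminates after $O(\log(r/n))$ rounds when the residual ranks drop to $O(1)$, at which point Theorem \ref{th:main} closes the argument. Telescoping the incremental errors via Cauchy--Schwarz produces the $\sqrt{\log(r/n)}\cdot\sigma$ factor; when $r\le n$ no iteration is needed and the bound collapses to $O(\sigma)$, recovering the $C_{r,n}=1$ regime.

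The main obstacle is the second step. The Bownik--Casazza--Marcus--Speegle technology is built around rank-one updates of the mixed characteristic polynomial, and extending it to a grouped version where $r$ rank-one perturbations are applied simultaneously requires a higher-rank analogue of the Cauchy interlacing identity together with an explicit three-term estimate that does not lose the constant $3$ of Theorem \ref{th:main}. A secondary difficulty lies in the partial-coloring iteration: because $\sigma^2 = \bigl\|\sum_i\mbox{{\bf Var}}[\xi_i]\mathbf{A}_i^2\bigr\|$ is an operator norm of a sum rather than a sum of norms, tracking its evolution under successive colorings requires a Gram-matrix style argument rather than a crude union bound, and absent such a refined update the combination of the grouped interlacing with dimension reduction falls short of the conjecture by a factor of $\sqrt{\log n}$.
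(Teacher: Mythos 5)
This statement is not a theorem of the paper but an open conjecture: the paper supplies no proof, so there is nothing to match your argument against, and your text is a research plan rather than a proof. As a plan it leaves the genuinely hard steps unresolved. The central gap is your second step. The paper's route to Theorem \ref{th:main} hinges on the fact that each update is rank one: Lemma \ref{lemma:Qisquadratic} shows $Q(x,z_1,\ldots,z_n)$ is \emph{quadratic} in each $z_i$, and it is exactly this quadraticity that makes the barrier lemmas of Bownik--Casazza--Marcus--Speegle (Lemma \ref{Lemma:quadraticbarriernonincreasing} and Lemma \ref{Lemma:QuadraticBarrier}) applicable; likewise the interlacing-family statement, Lemma \ref{lemma:defineinterfaimily}, is proved for rank-one matrices $\vv_i\vv_i^*$. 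If you group the $r$ eigen-pieces of $\mathbf{A}_i$ so that they share the sign $\varepsilon_i$, the relevant determinant becomes degree $2r$ in $z_i$, and no analogue of the quadratic barrier estimate, nor of the common-interlacing property for the grouped expansion, is known; asserting that ``the grouped barrier should still satisfy a Cauchy-type interlacing identity'' is precisely the open problem, not a step you can defer.

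The iteration you propose to extract $\sqrt{C_{r,n}}$ has independent problems. In this discrepancy model each $\varepsilon_i$ must be chosen from the finite support $\cS_i$ of $\xi_i$, so there is no notion of a partial or fractional coloring to carry between rounds; you give no mechanism by which fixing a constant fraction of indices ``reduces the effective per-block rank by a constant factor''; and to telescope to $\sqrt{\log(r/n)}\cdot\sigma$ rather than $\log(r/n)\cdot\sigma$ you would need the residual variance proxies to decay geometrically, which is exactly the Gram-matrix bookkeeping you concede is missing (note $\sigma^2$ is an operator norm of a sum, so per-round errors do not add in quadrature for free). Finally, your base case does not close: Theorem \ref{th:main} is strictly a rank-one statement, so even residual blocks of rank $2$, let alone the whole regime $r\leq n$ where the conjecture demands a dimension-free $O(\sigma)$, are not covered by anything in the paper (Cohen's and Br\"and\'en's higher-rank results apply to the Kadison--Singer normalization with $\sum_i\mathbf{A}_i=\mathbf{I}$ and $\delta=\max_i\mathrm{Tr}(\mathbf{A}_i)$, not to the general variance parameter $\sigma$ here). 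In short, the proposal reduces the conjecture to two unproven claims, each of which is essentially the conjecture itself in the regimes that matter.
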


\subsection{Related work}
\label{sec:relatedwork}
 In the past years, one already established many results
about the matrix discrepancy under various assumptions about  matrices $\mathbf{A}_1,\ldots,\mathbf{A}_n$.
We list some results as follows, which show some connections between matrix
discrepancy and other mathematical topics. In this subsection, we assume that $\xi_1,\ldots,\xi_n$ are independent Rademacher random variables.

\subsubsection{Discrepancy minimization: $(0,1)$-diagonal matrices}
Suppose that we have  a set system $\cD=\{D_1,\ldots,D_n\}$ with
$D_i\subseteq\{1,\ldots,d\}$.
 We would like  to find a bi-coloring ~$\chi$: $\{1,\ldots,d\}\to \{\pm 1\}$ such that the most
  imbalance 
  $\max\limits_{i\in\{1,\ldots,n\}}\big|\sum\limits_{j\in D_i}\chi(j)\big|$
  of  $\cD$ is minimized.
  The minimum value is called the \textit{discrepancy} of  the set system $\cD$, denoted
   by $\mbox{disc}(\cD)$.
In a celebrated paper \cite{SpencerSix}, Spencer established the famous six standard
deviations theorem: for any set system ~$\cD$  with $d$ sets and $d$ points, there
exists a bi-coloring $\chi$ such that $\mbox{disc}(\cD) \leq 6\sqrt{d}$. Later,
Gluskin \cite{Gluskin89} and Banaszczyk \cite{bana1988} strengthened Spencer's result
based on deep ideas from convex geometry.
 A well-known conjecture in discrepancy minimization is
Beck and Fiala's conjecture \cite{BF81}, which says
\begin{equation}\label{eq:BF}
\mbox{disc}(\cD)\,\,\leq \,\, O(\sqrt{t}),
\end{equation}
where 
 $t:=\max\limits_{j\in\{1,2,\ldots,d\}} \sum\limits_{i=1}^d\delta_{D_i}(j)$.
Here, we set
\begin{equation*}
\delta_{D_i}(j):=\left\{
        \begin{array}{cl}
        1, & j\in D_i\\
          0, & \mbox{else}
        \end{array}
      \right..
\end{equation*}
One can reformulate Spencer's result with the language of  the matrix discrepancy.
For $i\in \{1,\ldots,d\}$, set
\begin{equation}\label{eq:Ai}
\mathbf{A}_i={\rm Diag}(\delta_{D_1}(i),\ldots,\delta_{D_d}(i)),
\end{equation}
which is a $d\times d$ diagonal matrix.  Under this setting, Spencer's result is
equivalent to
\[
\Disc(\mathbf{A}_1,\ldots,\mathbf{A}_d; \xi_1,\ldots,\xi_d)\,\,\leq\,\, 6\sqrt{d}.
\]
Moreover, Beck and Fiala's conjecture, i.e., (\ref{eq:BF}), is equivalent to
\[
\Disc(\mathbf{A}_1,\ldots,\mathbf{A}_d; \xi_1,\ldots,\xi_d)\,\, \leq \,\, O(\max_i \sqrt{{\rm Tr}(\mathbf{A}_i)}).
\]
 Finally, we would like to mention that, for the
general symmetric matrices, Meka made the following interesting conjecture
 \cite{meka2014}:
\begin{conjecture}[\cite{meka2014}]
Assume that $\xi_1,\ldots,\xi_d$ are independent Rademacher variables.
 For any symmetric
matrices $\mathbf{A}_1,\ldots, \mathbf{A}_d\in \mathbb{R}^{d\times d }$  with $\|\mathbf{A}_i\|\leq 1$, one has
\[
 \Disc(\mathbf{A}_1,\ldots,\mathbf{A}_d; \xi_1,\ldots,\xi_d)\,\, \leq\,\, O(\sqrt{d}).
 \]
\end{conjecture}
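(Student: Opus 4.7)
The plan is to emulate Spencer's entropy-based partial coloring argument in the non-commutative matrix setting. Fix an absolute constant $c>0$ and define the symmetric convex body
\[
K_c \,:=\, \Big\{x\in\R^d \;:\; \Big\|\sum_{i=1}^d x_i\mathbf{A}_i\Big\|\leq c\sqrt{d}\Big\}\subset \R^d.
\]
The first step would be to show that the standard Gaussian measure $\gamma_d(K_c)$ is bounded below by a universal constant whenever $\|\mathbf{A}_i\|\leq 1$.

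With such a measure bound in hand, the second step invokes Giannopoulos's partial coloring lemma: any symmetric convex body $K\subset \R^d$ with $\gamma_d(K)\geq e^{-d/64}$ contains a point of $6K\cap[-1,1]^d$ with at least $d/2$ coordinates in $\{\pm 1\}$. Applied to $K_c$, this produces a ``half-signing'' $x\in[-1,1]^d$ with $\big\|\sum x_i\mathbf{A}_i\big\|=O(\sqrt{d})$. One then iterates on the uncolored coordinates, which form an instance of the same problem in dimension at most $d/2$; the contribution at level $k$ is $O(\sqrt{d\cdot 2^{-k}})$, and the geometric sum telescopes to $O(\sqrt{d})$, giving the desired discrepancy bound.

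The main obstacle is Step~1. A standard Gaussian signing $g\sim N(0,I_d)$ satisfies $\E\big\|\sum g_i\mathbf{A}_i\big\|=O(\sqrt{d\log d})$ by the matrix Khintchine inequality, so typical signings overshoot by exactly the $\sqrt{\log d}$ factor one wants to eliminate, and by Proposition~\ref{pr:lower} this bound is tight for diagonal instances. The conjecture therefore cannot be established by controlling a typical signing; one must show that an atypical, spectrally cancelling signing exists. Two complementary routes look promising. The first is a Talagrand generic-chaining argument tailored to bounded-norm non-commutative summands, in which the $\gamma_2$-functional of the induced stochastic process is shown to be much smaller than the crude matrix Khintchine bound when the summands genuinely interact. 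The second is an adaptation of the interlacing polynomial framework of Marcus-Spielman-Srivastava, as extended by Kyng-Luh-Song in Theorem~\ref{th:main}, applied to the mixed characteristic polynomial $p(z)=\E_\varepsilon\det\big(zI-\sum\varepsilon_i\mathbf{A}_i\big)$.

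The crux of route (ii), and the reason the problem remains open, is that the existing interlacing-polynomial machinery requires each $\mathbf{A}_i$ to be rank one so that the multiplicative identity $\det(M+uu^*)=\det(M)(1+u^*M^{-1}u)$ can be used to track how a rank-one update shifts the characteristic roots. For general bounded-norm symmetric $\mathbf{A}_i$ no such clean identity is available, and I expect the substantive work to lie in producing a replacement: either decomposing each $\mathbf{A}_i$ into rank-one pieces and controlling the resulting overcomplete system without incurring a dimension-dependent loss when Theorem~\ref{th:main} is invoked, or developing a determinantal identity for rank-$r$ updates that remains compatible with root interlacing. I do not expect this plan to close the conjecture outright, but it isolates the precise step, namely the Gaussian-measure estimate for $K_c$ (equivalently, the existence of a spectrally cancelling signing that beats generic concentration), at which genuinely new machinery is required.
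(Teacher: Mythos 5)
There is a genuine gap here, and it is the whole proof: the statement you are addressing is recorded in the paper only as an open conjecture of Meka (the paper itself offers no argument for it), and your text, as you yourself acknowledge in the final paragraph, is a research plan rather than a proof. The plan you outline --- lower-bound the Gaussian measure of the body $K_c=\{x\in\R^d:\|\sum_i x_i\mathbf{A}_i\|\le c\sqrt d\}$, apply Giannopoulos-type partial coloring, and recurse --- is a legitimate and well-known reduction, and you correctly locate the crux in the measure estimate (equivalently, in the existence of a spectrally cancelling signing that beats the matrix Khintchine bound $O(\sqrt{d\log d})$). But no part of that crux is carried out, so nothing beyond the reduction is established.

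Two further points in your sketch need care even as a plan. First, the recursion step is not literally ``an instance of the same problem in dimension at most $d/2$'': partial coloring halves the number of summands, but the matrices remain $d\times d$, so at level $k$ you need the Gaussian measure bound for $m\approx d2^{-k}$ matrices of size $d\times d$ with the body of radius $c\sqrt m$ in $\R^m$ --- a family of estimates uniform over all scales $m\le d$, which is strictly more demanding than the single top-level bound you state as Step 1. Second, your appeal to Proposition \ref{pr:lower} as evidence that typical signings overshoot by a $\sqrt{\log d}$ factor concerns the regime $n=\log_2 d$, where that loss is genuinely unavoidable; it says nothing directly about the regime $n=d$ of Meka's conjecture, so it cannot be used to calibrate what ``generic concentration'' gives there. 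Your assessment of the interlacing-family route --- that the machinery behind Theorem \ref{th:main} is tied to rank-one updates via the matrix determinant lemma and has no clean analogue for general bounded-norm summands --- is accurate and consistent with why the conjecture remains open.
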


\subsubsection{ Kadison-Singer problem: rank-$1$ matrices whose summation equals to an identity matrix}\label{subsec:KS}
Suppose that
 $\vu_1,\ldots,\vu_n\in\mathbb{C}^d$ with $\sum_{i=1}^n \vu_i\vu_i^*=\mathbf{I}$.
Set $\delta := \max_{i}\|\vu_i\vu_i^*\|$. In  \cite{Marcus2015InterlacingII}, Marcus,
Spielman and  Srivastava resolved Kadison-Singer problem. Their result implies that there
exists a partition $\{T_1,T_2\}$ of $\{1,2,\ldots,n\}$ such that
\[
\bigg\|\sum_{i\in T_j} \vu_i\vu_i^*\bigg\|\,\, \leq \,\, \left(\frac{1}{\sqrt{2}}+\sqrt{\delta}\right)^2,\quad
j\in \{1,2\},
\]
which is equivalent to
\begin{equation}\label{eq:KS}
\Disc(\vu_1\vu_1^*,\ldots,\vu_n\vu_n^*; \xi_1,\ldots,\xi_n)\leq
2(\sqrt{2\delta}+\delta).
\end{equation}
 In \cite{bownik2019improved}, Bownik,  Casazza, Marcus, and  Speegle improved
 (\ref{eq:KS}) by employing the theory of mixed discriminant,
   a multilinear generalization of the determinant function. Particularly, they showed that
\begin{equation}\label{eq:IKS}
\Disc(\vu_1\vu_1^*,\ldots,\vu_n\vu_n^*; \xi_1,\ldots,\xi_n)\leq 2\sqrt{2}(\sqrt{1-2\delta})\sqrt{\delta}
\end{equation}
for  vectors $\vu_1,\ldots,\vu_n\in\mathbb{C}^d$ with $\sum_{i=1}^n \vu_i\vu_i^*=\mathbf{I}$
and $\delta = \max_{i}\|\vu_i\vu_i^*\|<1/4$.

In \cite{coh2016}, Cohen adapted the proof in \cite{Marcus2015InterlacingII} to generalize the Kadison-Singer problem to positive semidefinite matrices with arbitrary ranks, which implies that
\begin{equation}
	\Disc(\mathbf{A}_1,\ldots,\mathbf{A}_n; \xi_1,\ldots,\xi_n)\,\,\leq\,\, 2(\sqrt{2\delta}+\delta),
\end{equation}
where  $\mathbf{A}_1,\ldots,\mathbf{A}_n$ are positive semidefinite matrices satisfying
$\sum_{i=1}^n\mathbf{A}_i=\mathbf{I}$ and ~$\delta = \max_i \text{Tr}(\mathbf{A}_i)$.
 In \cite{branden2018hyperbolic}, Br{\"a}nd{\'e}n extended the result into the realm of hyperbolic polynomials and slightly improved Cohen's result,  where   the determinant and matrices are replaced with a hyperbolic polynomial and   elements in the corresponding  hyperbolic cone  respectively.

\subsection{Organization} This paper is  organized as follows.
 In Section \ref{section:Preliminaries},
we introduce some notations and lemmas which are used in our proof.
Section \ref{subsec:reviewofKLS} introduces the  techniques of  mixed discriminant as well as presents a sketch of the proof of Theorem \ref{th:main}.
The proofs of main theorems are given in Sections  \ref{sec:proof} and \ref{sec:schatten}, respectively. In Section \ref{sec:conc}, we extend the matrix discrepancy to Schatten $p$-norm case and present a matrix discrepancy bound under this setting.



\section{Preliminaries}\label{section:Preliminaries}

\subsection{Notations and lemmas}
For a vector $\vv\in \MC^d$, we use $\|\vv\|$ to denote its Euclidean $2$-norm. We use $\R^d_{\geq 0}$ to denote the set of nonnegative vectors in $\R^d$. For
a matrix $\mathbf{M}\in \MC^{d\times d}$, we use $\|\mathbf{M}\|=\max\limits_{\|\vx\|=1}\|\mathbf{M}\vx\|$ to
denote its spectral norm.
We use ${\rm Tr}(\mathbf{M})$ to denote its trace,   and ${\rm adj}(\mathbf{M})$ to
denote its adjugate matrix, i.e. ${\rm adj}(\mathbf{M}) = \big((-1)^{i+j}\mathbf{M}_{ji}\big)_{1\leq
i,j\leq d}$, where $\mathbf{M}_{ji}$ is the $(j,i)$-minor of $\mathbf{M}$, the determinant of the
$(d-1) \times (d-1)$ matrix obtained by removing row $j$ and column $i$ of $\mathbf{M}$.

 We
write $\partial_{z_i}$ to indicate the partial differential
$\partial/\partial_{z_i}$.
 We say that a univariate polynomial is \emph{real-rooted} if all of its
coefficients and roots are real.
 For a real rooted polynomial $p(x)$, we use $\lambda_{\max}(p)$   to denote the largest root of $p(x)$.

We next introduce two lemmas, which are useful in our proofs.
 \begin{lemma}[Jacobi's Formula]\label{lemma:dermaxfun}  Let  $\mathbf{A}$  be a differentiable
 map form the real numbers to ~$\mathbb{C}^{d\times d}$ and $\mathbf{A}(t_0)$ be an invertible matrix. Then
	$$
\frac{\text{d}}{\text{dt}}\bigg|_{t=t_0}\det[\mathbf{A}(t)]=\det[\mathbf{A}(t_0)]{\rm Tr}\Bigg(\mathbf{A}(t_0)^{-1}\frac{\text{d}\mathbf{A}(t))}{\text{dt}}\bigg|_{t=t_0}\Bigg).
$$
\end{lemma}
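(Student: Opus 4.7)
The plan is to reduce the differentiation of $\det[\mathbf{A}(t)]$ at an arbitrary invertible point $\mathbf{A}(t_0)$ to the simpler problem of differentiating the determinant at the identity, where a direct Leibniz-type expansion yields the trace. First I would exploit the invertibility of $\mathbf{A}(t_0)$ to factor
\[
\mathbf{A}(t) \;=\; \mathbf{A}(t_0)\cdot \mathbf{B}(t), \qquad \mathbf{B}(t):=\mathbf{A}(t_0)^{-1}\mathbf{A}(t),
\]
so that $\det[\mathbf{A}(t)]=\det[\mathbf{A}(t_0)]\cdot \det[\mathbf{B}(t)]$. Note $\mathbf{B}(t_0)=\mathbf{I}$ and $\mathbf{B}'(t_0)=\mathbf{A}(t_0)^{-1}\mathbf{A}'(t_0)$, so by differentiability,
\[
\mathbf{B}(t) \;=\; \mathbf{I} + (t-t_0)\,\mathbf{A}(t_0)^{-1}\mathbf{A}'(t_0) + o(t-t_0).
\]

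The key local fact I would then establish is that for any $\mathbf{C}\in\mathbb{C}^{d\times d}$,
\[
\det(\mathbf{I}+s\,\mathbf{C}) \;=\; 1 + s\cdot \mathrm{Tr}(\mathbf{C}) + O(s^2) \quad\text{as } s\to 0.
\]
This drops out of the Leibniz formula $\det(\mathbf{M})=\sum_{\pi\in S_d}\mathrm{sgn}(\pi)\prod_i \mathbf{M}_{i,\pi(i)}$: only the identity permutation $\pi=\mathrm{id}$ can contribute to the coefficient of $s^1$, since any non-identity $\pi$ moves at least two indices and therefore kills at least two diagonal factors of $\mathbf{I}$, producing $O(s^2)$. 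The identity permutation contributes $\prod_i(1+sC_{ii})=1+s\sum_i C_{ii}+O(s^2)$, as claimed. Equivalently, one may appeal to the characteristic polynomial $\det(\lambda\mathbf{I}-\mathbf{C})=\lambda^d-\mathrm{Tr}(\mathbf{C})\lambda^{d-1}+\cdots$ evaluated at $\lambda=-1/s$ after a rescaling.

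Combining these pieces, I would set $\mathbf{C}=\mathbf{A}(t_0)^{-1}\mathbf{A}'(t_0)$ and $s=t-t_0$ to get
\[
\det[\mathbf{B}(t)] \;=\; 1 + (t-t_0)\,\mathrm{Tr}\!\bigl(\mathbf{A}(t_0)^{-1}\mathbf{A}'(t_0)\bigr) + o(t-t_0),
\]
where the $o(t-t_0)$ term absorbs both the $O(s^2)$ from the expansion and the impact of the higher-order remainder in $\mathbf{B}$ (the determinant is a polynomial in the entries, so continuity controls this). Multiplying by $\det[\mathbf{A}(t_0)]$ and reading off the linear coefficient yields Jacobi's formula. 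There is no real obstacle here: the only point requiring mild care is checking that the $o(t-t_0)$ error in $\mathbf{B}(t)$ indeed feeds only into an $o(t-t_0)$ error in the determinant, which is immediate from the polynomial (hence $C^\infty$) dependence of $\det$ on matrix entries.
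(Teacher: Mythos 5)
Your proof is correct: the reduction $\mathbf{A}(t)=\mathbf{A}(t_0)\mathbf{B}(t)$ with $\mathbf{B}(t_0)=\mathbf{I}$, the first-order expansion $\det(\mathbf{I}+s\mathbf{C})=1+s\,\mathrm{Tr}(\mathbf{C})+O(s^2)$ via the Leibniz formula (only the identity permutation contributes at order $s$), and the observation that the $o(t-t_0)$ remainder in $\mathbf{B}(t)$ only perturbs the determinant by $o(t-t_0)$ because $\det$ is a polynomial in the entries, together give exactly the claimed derivative. The paper states this lemma as classical Jacobi's formula without proof, so there is no in-paper argument to compare against; your argument is the standard one and fills that gap correctly.
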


\begin{lemma}[Matrix Determinant Lemma]\label{lemma:maxdet}
Suppose that $\mathbf{A} \in \MC^{d\times d}$ and $\vu,\vv\in \mathbb{C}^d$. Then
$$
	\det[\mathbf{A}+\vu\vv^*]=\det[\mathbf{A}] + \vv^* {\rm adj}(\mathbf{A}) \vu.
$$

\end{lemma}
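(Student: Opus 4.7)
The plan is to prove the identity by directly expanding $\det[\mathbf{A}+\vu\vv^*]$ using multilinearity of the determinant in its columns, exploiting the fact that $\vu\vv^*$ is a rank-one perturbation. Writing the $j$-th column of $\mathbf{A}+\vu\vv^*$ as $\mathbf{A}_{\cdot,j}+\overline{v_j}\,\vu$ and applying column-multilinearity, one obtains a sum over subsets $S\subseteq\{1,\ldots,d\}$, where the term indexed by $S$ is the determinant of the matrix whose $j$-th column equals $\overline{v_j}\,\vu$ for $j\in S$ and $\mathbf{A}_{\cdot,j}$ otherwise. The key observation is that whenever $|S|\geq 2$, the resulting matrix has at least two columns that are scalar multiples of $\vu$, hence linearly dependent, so its determinant vanishes. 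Only the empty set and the singletons $S=\{j\}$ survive.

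The empty-set contribution is $\det[\mathbf{A}]$. For $S=\{j\}$, the contribution is $\overline{v_j}\det[\mathbf{A}_j]$, where $\mathbf{A}_j$ is $\mathbf{A}$ with its $j$-th column replaced by $\vu$. Carrying out a cofactor expansion of $\det[\mathbf{A}_j]$ along column $j$ and matching the outcome with the convention $\mathrm{adj}(\mathbf{A})_{ij}=(-1)^{i+j}\mathbf{M}_{ji}$ stated in the excerpt, one reads off $\det[\mathbf{A}_j]=\sum_{i=1}^d u_i\,\mathrm{adj}(\mathbf{A})_{ji}$. Summing over $j$ yields
$$
\sum_{i,j}\overline{v_j}\,\mathrm{adj}(\mathbf{A})_{ji}\,u_i\,=\,\vv^{*}\,\mathrm{adj}(\mathbf{A})\,\vu,
$$
which, combined with the empty-set term, gives the claimed formula.

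The main (and rather minor) obstacle is purely bookkeeping: matching the sign $(-1)^{i+j}$ arising from cofactor expansion of $\det[\mathbf{A}_j]$ with the adjugate convention used in the paper, so that the coefficient of $\overline{v_j}u_i$ is exactly $\mathrm{adj}(\mathbf{A})_{ji}$ and not its transpose. A fully equivalent alternative route would be to first treat the case where $\mathbf{A}$ is invertible by factoring out $\mathbf{A}$ to reduce to the Sylvester identity $\det[\mathbf{I}+\vx\vy^{*}]=1+\vy^{*}\vx$ and then using $\mathrm{adj}(\mathbf{A})=\det[\mathbf{A}]\,\mathbf{A}^{-1}$; the general case then follows because both sides of the claimed identity are polynomials in the entries of $\mathbf{A}$, $\vu$, $\vv$, and they agree on the Zariski-dense set of invertible $\mathbf{A}$.
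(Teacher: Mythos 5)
Your proof is correct. Note that the paper itself offers no proof of this lemma: it is the classical Matrix Determinant Lemma, stated as a known fact and used later (in the proof of Lemma \ref{lemma:Qisquadratic}) without further justification. Your column-multilinearity expansion is a complete and valid derivation: writing the $j$-th column of $\mathbf{A}+\vu\vv^*$ as $\mathbf{A}_{\cdot,j}+\overline{v_j}\vu$, all terms with two or more columns proportional to $\vu$ vanish, and the surviving singleton terms give $\sum_j \overline{v_j}\det[\mathbf{A}_j]=\sum_{i,j}\overline{v_j}\,\mathrm{adj}(\mathbf{A})_{ji}\,u_i=\vv^*\,\mathrm{adj}(\mathbf{A})\,\vu$, with the sign bookkeeping matching the paper's convention $\mathrm{adj}(\mathbf{A})=\big((-1)^{i+j}\mathbf{M}_{ji}\big)_{i,j}$ exactly as you state. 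Your alternative route (reduce to $\det[\mathbf{I}+\vx\vy^*]=1+\vy^*\vx$ for invertible $\mathbf{A}$ via $\mathrm{adj}(\mathbf{A})=\det[\mathbf{A}]\,\mathbf{A}^{-1}$, then extend by polynomial identity/density) is equally standard and also sound; either argument would serve as a proof had the authors chosen to include one.
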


\subsection{Interlacing families} 
In this subsection, we introduce interlacing families of polynomials (see
\cite{marcus2015interlacingI,Marcus2015InterlacingII}), which  is an effective tool to show the existence of some combinatorial objects.

\begin{definition}[\cite{marcus2015interlacingI}]
We say that a real rooted polynomial  $g(x)=\alpha_0 \prod_{i=1}^{n-1}(x-\alpha_i)$
\textit{interlaces} a real rooted polynomial $p(x)=\beta_0\prod_{i=1}^n(x-\beta_i)$
if
\[
\beta_1\leq\alpha_1\leq\beta_2\leq\alpha_2\leq\cdots\leq\alpha_{n-1}\leq\beta_n.
\]
We say that  polynomials  $\{p_j\}_{j=1}^k$ have a \textit{common interlacing} if
there exists a polynomial $g$ such that ~$g$ interlaces $p_j$ for each $j$.
\end{definition}

\begin{definition}[\cite{marcus2015interlacingI}]
Let $\cS_1,\ldots,\cS_n$ be finite sets. For every assignment $(s_1,\ldots,s_n)\in
\cS_1\times\cdots\times \cS_n$, let $p_{s_1,\ldots,s_n}(x)$ be a real-rooted degree
$d$ polynomial with positive leading coefficient. For a partial assignment
$(s_1,\ldots,s_k)\in \cS_1\times\cdots\times \cS_k$ with $k<n$, define
$$
p_{s_1,\ldots,s_k}(x):=\sum_{s_{k+1}\in \cS_{k+1},\ldots,s_n\in \cS_n}p_{s_1,\ldots,s_k,s_{k+1},\ldots,s_n}(x)
$$
as well as
$$
p_{\emptyset}(x):=\sum_{s_{1}\in \cS_{1},\ldots,s_n\in \cS_n}p_{s_1,\ldots,s_n}(x).
$$

We say the polynomials $\{p_{s_1,\ldots,s_n}: (s_1,\ldots,s_n)\in \cS_1\times \cdots
\times \cS_n\}$ form an \emph{interlacing family} if for all $k=0,\ldots,n-1$ and all
$(s_1,\ldots,s_k)\in \cS_1\times\cdots\times \cS_k$, the polynomials $
\{p_{s_1,\ldots,s_k,t}\}_{t\in \cS_{k+1}} $ have a common interlacing.
\end{definition}

 The following lemma is necessary for our argument.
\begin{lemma}[{\cite[Theorem 3.4]{Marcus2015InterlacingII}}]
\label{lemma:propertiesofinterfamily} Let $\cS_1,\ldots,\cS_n$ be finite sets and let
$\{p_{s_1,\ldots,s_n}:(s_1,\ldots,s_n)\in \cS_1\times\cdots\times \cS_n\}$
be an
interlacing family.  Then there exist some $(s_1,\ldots,s_n)\in \cS_1\times\cdots\times
\cS_n$ such that
$$
\lambda_{\max}(p_{s_1,\ldots,s_n})\leq \lambda_{\max}(p_{\emptyset}).
$$
\end{lemma}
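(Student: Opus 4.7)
The plan is to prove Lemma~\ref{lemma:propertiesofinterfamily} by induction on $n$, where the key inductive step is a standard ``one-step shrinking'' property of families of polynomials sharing a common interlacer. Concretely, I will first isolate the following lemma: if $q_1,\ldots,q_m$ are real-rooted polynomials of the same degree with positive leading coefficients and a common interlacer $g$, then the sum $q=\sum_{j=1}^m q_j$ is also real-rooted, and there exists an index $j^*$ with $\lambda_{\max}(q_{j^*})\le \lambda_{\max}(q)$. Granting this auxiliary lemma, the main argument is almost immediate: apply it to the family $\{p_{s_1}\}_{s_1\in\cS_1}$ (whose sum is $p_\emptyset$) to choose $s_1\in\cS_1$ with $\lambda_{\max}(p_{s_1})\le\lambda_{\max}(p_\emptyset)$; then apply it to $\{p_{s_1,s_2}\}_{s_2\in\cS_2}$ (whose sum is $p_{s_1}$) to choose $s_2$ with $\lambda_{\max}(p_{s_1,s_2})\le\lambda_{\max}(p_{s_1})$; and continue. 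Chaining the $n$ inequalities yields $\lambda_{\max}(p_{s_1,\ldots,s_n})\le\lambda_{\max}(p_\emptyset)$, which is the claim. At each stage the interlacing-family definition ensures that the relevant children share a common interlacer, so the auxiliary lemma is applicable.

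To prove the auxiliary lemma, let $g$ be the common interlacer with roots $\gamma_1\le\cdots\le\gamma_{n-1}$, and denote the largest root of $q_j$ by $\alpha_j$; by the interlacing definition each $\alpha_j\ge\gamma_{n-1}$. Set $\alpha:=\max_j \alpha_j$ and $\beta:=\min_j \alpha_j$. For any $x>\alpha$, each $q_j(x)>0$ (positive leading coefficient), hence $q(x)>0$, so $\lambda_{\max}(q)\le\alpha$. At $x=\beta$, the index realizing the minimum gives $q_j(\beta)=0$, and every other $q_j$ has $\gamma_{n-1}\le\beta<\alpha_j$, forcing $q_j(\beta)\le 0$; thus $q(\beta)\le0$. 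Combined with $q(x)>0$ for large $x$, this gives $\lambda_{\max}(q)\ge\beta$. Therefore $\beta\le\lambda_{\max}(q)\le\alpha$, so the index achieving $\alpha_{j^*}=\beta$ satisfies $\lambda_{\max}(q_{j^*})\le\lambda_{\max}(q)$, as desired. The real-rootedness of $q$ follows by a similar sign analysis on the intervals $(-\infty,\gamma_1], [\gamma_k,\gamma_{k+1}], [\gamma_{n-1},\infty)$: on each such interval the common interlacing forces the signs of the $q_j$'s to agree at the endpoints, so $q$ inherits sign changes that produce a root in each interval, giving $n$ real roots counted with multiplicity.

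I would expect the main obstacle to be the clean verification of real-rootedness of the sum $q$; the root-comparison half is a short intermediate-value argument, but the sign-tracking on each gap $(\gamma_k,\gamma_{k+1})$ requires attention because $q_j$ can change sign inside a gap. The cleanest presentation is to note that, at the roots $\gamma_k$ of the interlacer, every $q_j(\gamma_k)$ has the same sign $(-1)^{n-k}$ (since exactly $n-k$ roots of $q_j$ lie strictly to the right of $\gamma_k$), so $q(\gamma_k)$ has that same sign, and consecutive $\gamma_k,\gamma_{k+1}$ produce opposite signs of $q$, yielding $n-2$ roots in the interior gaps plus one root in $(-\infty,\gamma_1]$ and one in $[\gamma_{n-1},\infty)$. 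Once this is in hand, the inductive construction and the chained inequalities complete the proof.
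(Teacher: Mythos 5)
Your proposal is correct in substance, but note that the paper itself does not prove this lemma at all: it is imported verbatim as Theorem 3.4 of Marcus--Spielman--Srivastava \cite{Marcus2015InterlacingII}, so there is no internal proof to compare against. What you have written is essentially the original MSS argument: the auxiliary fact that polynomials with a common interlacing have a real-rooted sum and contain a member whose largest root is at most that of the sum, applied down the tree of partial assignments and chained through the $n$ levels. Your intermediate-value argument for the root comparison is sound (with one cosmetic slip: you write $\gamma_{n-1}\le\beta<\alpha_j$ for the non-minimizing indices, but equality $\beta=\alpha_j$ can occur; it is harmless since then $q_j(\beta)=0\le 0$). The real-rootedness half needs the degenerate cases stated weakly: "exactly $n-k$ roots of $q_j$ lie strictly to the right of $\gamma_k$" can fail when roots of $q_j$ coincide with $\gamma_k$, so the correct statement is $(-1)^{n-k}q_j(\gamma_k)\ge 0$ for every $j$, whence $(-1)^{n-k}q(\gamma_k)\ge 0$; and when $q(\gamma_k)=0$ the alternating-sign root count must avoid double-counting the shared endpoint (standard fixes: observe that then every $q_j(\gamma_k)=0$, factor out $(x-\gamma_k)$ and induct, or perturb the $\gamma_k$). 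You flag exactly this point yourself, and with that care the argument is complete; an alternative shortcut for real-rootedness is the classical equivalence that two degree-$n$ polynomials with positive leading coefficients have a common interlacing iff all their convex combinations are real-rooted. In short: nothing is wrong, but you have reproved a cited black-box result rather than taken a route different from the paper, which uses it only as input to Lemmas \ref{lemma:defineinterfaimily} and Theorem \ref{lemma:estimatethemaxroot}.
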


\section{Mixed discriminants}\label{subsec:reviewofKLS}
This section aims to introduce the mixed discriminant techniques which play an important role in the proof of Theorem \ref{th:main}. Let us begin with  reviewing  the proof of Kyng-Luh-Song  \cite{kyng2019four}.

\subsection{Review of the techniques by Kyng-Luh-Song}
In \cite{kyng2019four},
Kyng, Luh and Song exploited the method of interlacing family to simultaneously control the largest and smallest eigenvalues of the matrices.
%

 Let $\xi_1,\ldots,\xi_n$ be independent  random variables  with finite support. 
 For $j=1,\ldots,n$, we use $\cS_j$ to denote the support of $\xi_j$.
  For any  $(\varepsilon_1,\ldots,\varepsilon_n)\in \mathcal{S}_1\times\cdots\times \mathcal{S}_n$ and  vectors
  $\vv_1,\ldots,\vv_n$ in $\mathbb{C}^d$, we  define
\begin{equation}\label{eq:thechildren}
p_{\varepsilon_1,\ldots,\varepsilon_n}(x):
=\prod_{j=1}^n \mathbb{P}(\xi_j=\varepsilon_j) \det\bigg[x^2\mathbf{I}-\bigg(\sum_{i=1}^n\big(\mathbb{E}[\xi_i]
-\varepsilon_i\big)
  \vv_i\vv_i^*\bigg)^2\bigg]
\end{equation}
  and
 \begin{equation}
  \label{f_emptydef}
  p_{\emptyset}(x) :=\mathop{\mathbb{E}}\limits_{\xi_1,\ldots,\xi_n}\det\bigg[x^2\mathbf{I}-\bigg(\sum_{i=1}^n
  \big(\mathbb{E}[\xi_i]-\xi_i\big)
  \vv_i\vv_i^*\bigg)^2\bigg]=\sum\limits_{\varepsilon_{1}\in\mathcal{S}_1,\ldots,\varepsilon_n\in\mathcal{S}_n}p_{\varepsilon_1,\ldots,\varepsilon_n}(x).
\end{equation}
It follows from \eqref{eq:thechildren} that for any choice of
$\varepsilon_1,\ldots,\varepsilon_n$,
\[
\bigg\|\sum\limits_{i=1}^n\mathbb{E}[\xi_i]\vv_i\vv_i^*-\sum\limits_{i=1}^n\varepsilon_i\vv_i\vv_i^*\bigg\|
=\lambda_{\max}( p_{\varepsilon_1,\ldots,\varepsilon_n}).
\]
 Thus, to prove Theorem \ref{th:main}, it is enough to show that
 there exists  a  choice of
$(\varepsilon_1,\ldots,\varepsilon_n)\in(\cS_{1}\times\cdots\times \cS_n)$ so that
$\lambda_{\max}(p_{\varepsilon_1,\ldots,\varepsilon_n})\leq 3\cdot\delta$.

The following lemma shows that	the polynomials
$\{p_{\varepsilon_1,\ldots,\varepsilon_n}:(\varepsilon_1,\ldots,\varepsilon_n)\in \mathcal{S}_1\times\cdots\times \mathcal{S}_n\}$ defined in (\ref{eq:thechildren})
   form an interlacing family.
 \begin{lemma}[{\cite[Proposition 3.3]{kyng2019four}}]
 \label{lemma:defineinterfaimily}
 	The polynomials $\{p_{\varepsilon_1,\ldots,\varepsilon_n}:(\varepsilon_1,\ldots,\varepsilon_n)\in \mathcal{S}_1\times\cdots\times \mathcal{S}_n\}$   defined in (\ref{eq:thechildren})
   form an interlacing family and
\begin{equation}\label{eq:pemp}
  p_{\emptyset}(x) =\prod_{i=1}^n\bigg(1-\frac{1}{2}\partial_{z_i}^2\bigg)\bigg{|}_{z_i=0}
Q(x,z_1,\ldots,z_n),
\end{equation}
where
\begin{equation}\label{eq:quadratic_poly}
Q(x,z_1,\ldots,z_n)=\det\bigg[x\mathbf{I}+\sum_{i=1}^n z_i\tau_i \vv_i \vv_i^{*}\bigg]^2
\end{equation}
and
 $\tau_i = \sqrt{\mbox{{\bf Var}}[\xi_i]}$, $i=1,\ldots,n$.
\end{lemma}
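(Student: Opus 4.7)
The plan is to establish the two assertions of the lemma --- the identity~\eqref{eq:pemp} for $p_\emptyset$ and the interlacing-family property --- by combining a direct moment-matching computation with the real-stability framework of Borcea--Br\"and\'en.

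For the identity~\eqref{eq:pemp}, I would expand both sides in the Gram-determinant basis. Set $\beta_i=\E[\xi_i]-\xi_i$, so $\E\beta_i=0$ and $\E\beta_i^2=\tau_i^2$. Since $\vv_i\vv_i^*$ has rank one, $\det[xI+\sum_i c_i\vv_i\vv_i^*]$ is multilinear in $(c_1,\ldots,c_n)$, and a standard expansion yields
\[
\det\Big[xI\pm\sum_i\beta_i\vv_i\vv_i^*\Big]=\sum_{S\subseteq[n]}(\pm\beta)^S\,x^{d-|S|}\,G_S,
\]
where $G_S=\det(V_S^*V_S)$ and $V_S$ is the matrix of columns $\{\vv_i:i\in S\}$. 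Multiplying the two expansions and taking expectation, independence together with $\E\beta_i=0$ and $\E\beta_i^2=\tau_i^2$ kill every cross term except those with $S=T$, giving
\[
p_\emptyset(x)=\sum_{S\subseteq[n]}(-1)^{|S|}\Big(\prod_{i\in S}\tau_i^2\Big)\,x^{2(d-|S|)}\,G_S^2.
\]
The same multilinear expansion applied to $Q(x,z)=\det[xI+\sum_i z_i\tau_i\vv_i\vv_i^*]^2$ produces a double sum over $(S,T)$. A coordinate-wise check shows $(1-\frac{1}{2}\partial_{z_i}^2)\bigm|_{z_i=0}$ maps $z_i^0,z_i^1,z_i^2$ to $1,0,-1$, so only the diagonal $S=T$ terms survive with sign $(-1)^{|S|}$, matching the previous display term by term.

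For the interlacing-family property, I would apply the Marcus--Spielman--Srivastava criterion that $\{p_{s_1,\ldots,s_k,t}\}_{t\in\cS_{k+1}}$ shares a common interlacing whenever every nonnegative convex combination of its members is real-rooted. Any such convex combination, up to a positive scalar, equals $\E\bigl[\det[xI-\widehat c-M']\det[xI+\widehat c+M']\bigr]$, where $\widehat c$ is a fixed Hermitian matrix absorbing both the frozen coordinates $s_1,\ldots,s_k$ and the mean shift of the tilted distribution of $\xi_{k+1}$, and $M'=\sum_{i>k}\gamma_i\vv_i\vv_i^*$ has independent re-centered summands with $\E\gamma_i=0$ and $\E\gamma_i^2=\widehat\tau_i^2$. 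Setting $A_1=xI-\widehat c$ and $A_2=xI+\widehat c$, the moment-matching argument above still applies (the linear-in-$\gamma_i$ cross terms cancel by $\E\gamma_i=0$), yielding the shifted analogue
\[
\E\bigl[\det[A_1-M']\det[A_2+M']\bigr]=\prod_{i>k}\Bigl(1-\tfrac{1}{2}\partial_{z_i}^2\Bigr)\bigg|_{z_i=0}\det\Big[A_1+\sum_{i>k}z_i\widehat\tau_i\vv_i\vv_i^*\Big]\det\Big[A_2+\sum_{i>k}z_i\widehat\tau_i\vv_i\vv_i^*\Big].
\]
Each factor on the right is real stable in $(x,z_{k+1},\ldots,z_n)$: the coefficient of $x$ is $I$ (positive definite), the coefficient of each $z_i$ is the PSD matrix $\widehat\tau_i\vv_i\vv_i^*$, and $\pm\widehat c$ is Hermitian, so any zero with all imaginary parts positive would contradict the strict positive definiteness of the imaginary part of the matrix. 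Their product is therefore real stable, and the operator $1-\frac{1}{2}\partial_{z_i}^2$ factors as $(1+\frac{1}{\sqrt 2}\partial_{z_i})(1-\frac{1}{\sqrt 2}\partial_{z_i})$, whose univariate factors $1+c\partial_{z_i}$ each preserve real stability by the (multivariate) Hermite--Kakeya--Obreschkoff theorem. Since specialization $z_i=0$ is also stability-preserving, the result is a real-rooted univariate polynomial in $x$, establishing the common-interlacing condition at every level.

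The main obstacle will be the shifted analogue of~\eqref{eq:pemp}: one must carefully absorb the tilt bias into $\widehat c$ so the centered-moment identity still applies, and then verify that the two shifted determinants in the product faithfully play the roles previously filled by $\det[xI\mp M]$. Once this bookkeeping is in place, the chain of real-stability invocations --- product of real-stable polynomials, repeated application of $1-\frac{1}{2}\partial_{z_i}^2$, and final specialization --- follows routinely from the established results of Borcea--Br\"and\'en.
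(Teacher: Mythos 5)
Your proposal is correct: the multilinear (Cauchy--Binet type) expansion with moment matching gives the identity \eqref{eq:pemp}, and the tilted-distribution/mean-shift bookkeeping plus the Marcus--Spielman--Srivastava convex-combination criterion and the Borcea--Br\"and\'en stability facts (which are exactly Lemmas \ref{le:sta1} and \ref{le:sta2} here, so you need not appeal separately to Hermite--Kakeya--Obreschkoff) yields the interlacing-family property. The paper itself gives no proof of this lemma, importing it from Kyng--Luh--Song (their Proposition 3.3), and your argument is essentially a faithful reconstruction of that proof rather than a new route.
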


According to Lemma \ref{lemma:propertiesofinterfamily},  the largest root of
the individual polynomial ~$p_{\varepsilon_1,\ldots,\varepsilon_n}$ is related to the expected
polynomial $p_{\emptyset}$, which leads us to   estimate $\lambda_{\max}(p_{\emptyset})$.
To bound the largest root of $p_{\emptyset} $,
 Kyng, Luh and Song \cite{kyng2019four} utilized
 the {\em barrier function arguments} developed in \cite{Marcus2015InterlacingII,anari2014}. 

A key observation in this paper is the following lemma which says that
$Q(x,z_1,\ldots,z_n)$ is quadratic with respect to each $z_i$. Hence, we can  use the
technology developed in \cite{bownik2019improved} to obtain a better upper bound of
$\lambda_{\max}(p_{\emptyset})$.
\begin{lemma}\label{lemma:Qisquadratic}
The multivariate polynomial $Q(x,z_1,\ldots,z_n)$ defined in
(\ref{eq:quadratic_poly}) is   quadratic with respect to each $z_i$.
\end{lemma}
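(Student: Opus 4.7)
The plan is to fix an index $i \in \{1,\ldots,n\}$ and treat $Q(x,z_1,\ldots,z_n)$ as a function of the single variable $z_i$, with all other variables (including $x$ and $z_j$ for $j\neq i$) regarded as constants. Writing
\[
\mathbf{M}_i \,:=\, x\mathbf{I} + \sum_{j\neq i} z_j\tau_j \vv_j\vv_j^*,
\]
the matrix inside the determinant in \eqref{eq:quadratic_poly} becomes $\mathbf{M}_i + z_i\tau_i \vv_i\vv_i^*$, which is a rank-one update of $\mathbf{M}_i$.

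Next, I would apply the Matrix Determinant Lemma (Lemma \ref{lemma:maxdet}) with $\mathbf{A} = \mathbf{M}_i$, $\vu = z_i\tau_i \vv_i$ and $\vv = \vv_i$, giving
\[
\det\big[\mathbf{M}_i + z_i\tau_i \vv_i\vv_i^*\big] \,=\, \det[\mathbf{M}_i] + z_i\tau_i\, \vv_i^*\,\mathrm{adj}(\mathbf{M}_i)\,\vv_i.
\]
Since neither $\det[\mathbf{M}_i]$ nor $\vv_i^*\,\mathrm{adj}(\mathbf{M}_i)\,\vv_i$ depends on $z_i$, this expression is affine (degree at most one) in $z_i$. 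Squaring it produces a polynomial of degree at most two in $z_i$, which is exactly the claim that $Q(x,z_1,\ldots,z_n)$ is quadratic in $z_i$. Since $i$ was arbitrary, the same argument applies to each variable.

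There is essentially no obstacle: the argument is a direct, one-line consequence of the Matrix Determinant Lemma applied to the rank-one structure $\tau_i \vv_i\vv_i^*$. The only thing worth double-checking is that the lemma applies without requiring invertibility of $\mathbf{M}_i$, which is fine because the identity $\det[\mathbf{A}+\vu\vv^*] = \det[\mathbf{A}] + \vv^*\mathrm{adj}(\mathbf{A})\vu$ is a polynomial identity in the entries of $\mathbf{A}$ and therefore holds for all $\mathbf{A}$.
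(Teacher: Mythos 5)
Your proof is correct and is essentially identical to the paper's: both set $\mathbf{M}_i = x\mathbf{I}+\sum_{j\neq i}z_j\tau_j\vv_j\vv_j^*$, apply the Matrix Determinant Lemma to the rank-one update $z_i\tau_i\vv_i\vv_i^*$ to see that the determinant is affine in $z_i$, and conclude that its square is quadratic in $z_i$. Your remark that the adjugate identity holds without invertibility (being a polynomial identity) is a fine extra observation but not needed, since the paper's Lemma \ref{lemma:maxdet} is stated without an invertibility hypothesis.
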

\begin{proof}
According to (\ref{eq:quadratic_poly}), $Q(x,z_1,\ldots,z_n)$ is a multivariate
polynomial in term of $x,z_1,\ldots,z_n$.  Set $\mathbf{A}_i:= x\mathbf{I}+\sum_{j\neq i}z_j\tau_j\vv_j\vv_j^*$ for $1\leq i\leq n$.
Then we have
	\begin{equation*}
		\begin{aligned}
			Q(x,z_1,\ldots,z_n) &= \det[\mathbf{A}_i+z_i\tau_i\vv_i\vv_i^*]^2\\
								&= \big(\det[\mathbf{A}_i] + \vv_i^*{\rm adj}(\mathbf{A}_i)(z_i\tau_i\vv_i)\big)^2\\
								&= \big(\det[\mathbf{A}_i]+z_i(\tau_i\vv_i^*{\rm adj}(\mathbf{A}_i)\vv_i)\big)^2,
		\end{aligned}
	\end{equation*}
		where the second equality follows from Lemma \ref{lemma:maxdet}.
	 It follows
that $Q(x,z_1,\ldots,z_n)$ is quadratic with respect to $z_i$.
\end{proof}




\subsection{Barrier functions and mixed discriminant}
In this subsection, we introduce  the definitions of barrier functions and  mixed
discriminant, which are useful  in estimating $\lambda_{\max}(p_\emptyset)$. Let us
begin with the definition of real stability.

\begin{definition}
A   polynomial
$p\in \mathbb{R}[z_1,\ldots,z_n]$  is \emph{real stable} if
~$p(z_1,\ldots,z_n)\neq 0$ for  all $(z_1,\ldots,z_n)\in\mathbb{C}^n$ with $\mbox{{\bf Im}}(z_i)>0$ and $1\leq i\leq n$.
\end{definition}

The following lemma provides an important class of real stable polynomials.
\begin{lemma}[{\cite[Proposition 2.4]{borcea2008applications}}]\label{le:sta1}
If $\mathbf{A}_1,\ldots,\mathbf{A}_n\in\mathbb{C}^{d\times d}$  are positive semidefinite symmetric matrices and $\mathbf{B}\in\mathbb{C}^{d\times d}$ is a Hermitian matrix, then the
polynomial
\[
p(z_1,\ldots,z_n):=\det \bigg(\sum_{i=1}^n z_i\mathbf{A}_i+\mathbf{B}\bigg)
\]
is real stable.
\end{lemma}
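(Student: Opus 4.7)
The plan is to verify both defining properties of real stability of $p$ directly from the Hermitian/PSD structure of the given matrices. For the real-coefficient part, observe that whenever $(z_1,\ldots,z_n)\in\R^n$ the matrix $M(z):=\sum_{i=1}^n z_i\mathbf{A}_i+\mathbf{B}$ is a real combination of Hermitian matrices plus a Hermitian matrix, hence Hermitian, so $\det M(z)\in\R$; this forces every coefficient of $p$ to be real. The remaining task is to show $p(z)\neq 0$ whenever $z\in\MC^n$ satisfies $\mathrm{Im}(z_i)>0$ for each $i$, which I would attack by contradiction.

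Suppose $p(z^\ast)=0$ at such a point $z^\ast$. Singularity of $M(z^\ast)$ supplies a nonzero $\vv\in\MC^d$ with $M(z^\ast)\vv=\mathbf{0}$. Decompose $M(z^\ast)=H_1+\i H_2$, where $H_1:=\sum_{i}\mathrm{Re}(z_i^\ast)\mathbf{A}_i+\mathbf{B}$ and $H_2:=\sum_{i}\mathrm{Im}(z_i^\ast)\mathbf{A}_i$ are both Hermitian, and $H_2$ is in fact positive semidefinite because each $\mathrm{Im}(z_i^\ast)>0$ and each $\mathbf{A}_i\succeq 0$. Pairing with $\vv$ and separating real and imaginary parts of $\langle\vv,M(z^\ast)\vv\rangle=0$ yields $\langle\vv,H_1\vv\rangle=0$ and $\langle\vv,H_2\vv\rangle=0$. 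The second identity expands as $0=\sum_{i=1}^n \mathrm{Im}(z_i^\ast)\langle\vv,\mathbf{A}_i\vv\rangle$, whose summands are all nonnegative; thus each $\langle\vv,\mathbf{A}_i\vv\rangle=0$, and positive semidefiniteness of $\mathbf{A}_i$ then forces $\mathbf{A}_i\vv=\mathbf{0}$ for every $i$. Substituting back into $M(z^\ast)\vv=\mathbf{0}$ gives $\mathbf{B}\vv=\mathbf{0}$ as well.

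Hence $\vv$ lies in the common kernel $\bigcap_{i=1}^n\ker\mathbf{A}_i\cap\ker\mathbf{B}$, which immediately implies $M(w)\vv=\mathbf{0}$ for every $w\in\MC^n$, so $p\equiv 0$ on all of $\MC^n$. Under the standard convention that the zero polynomial is real stable, this closes the argument; alternatively, one may preface the lemma with the harmless assumption $p\not\equiv 0$ to convert the conclusion into an outright contradiction. The main subtlety, and really the only obstacle in the whole proof, is precisely this degenerate branch: the positive-semidefinite structure rules out isolated zeros of $p$ in the open upper half-polydisk, but it does \emph{not} prevent $p$ from vanishing identically when all of the matrices share a nontrivial common kernel, so the real-stability claim must be read with that caveat in mind.
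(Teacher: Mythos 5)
Your argument is correct, and it is worth noting that the paper does not prove this statement at all: it is quoted verbatim from Borcea and Br\"and\'en \cite{borcea2008applications}, whose own proof runs along a different route, namely restricting to real lines $z(t)=t\lambda+\mu$ with $\lambda\in\R^n_{>0}$ to reduce to the real-rootedness of $\det\big(tC+H\big)$ for $C\succ 0$ and $H$ Hermitian (a characteristic-polynomial computation), and then passing from positive definite to positive semidefinite coefficient matrices by a Hurwitz-type limit argument. Your proof is more elementary and entirely self-contained: the quadratic-form computation $\langle \vv,M(z^*)\vv\rangle=0$, splitting into the Hermitian parts $H_1$ and the PSD part $H_2$, correctly forces $\mathbf{A}_i\vv=\mathbf{0}$ for all $i$ and then $\mathbf{B}\vv=\mathbf{0}$, so it handles the semidefinite case directly without any perturbation. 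What the limiting argument buys in exchange is exactly the degenerate branch you flag: with your approach one only concludes that $p$ is nonvanishing on the open upper half-polyplane \emph{or} $p\equiv 0$, and under the paper's Definition 3.1 the zero polynomial is not real stable, so the lemma as literally stated requires either that convention or the harmless nondegeneracy hypothesis you mention. This caveat is immaterial for the paper's use of the lemma, since there it is applied to $\det\big[x\mathbf{I}+\sum_i z_i\tau_i\vv_i\vv_i^*\big]$, where the coefficient matrix of $x$ is $\mathbf{I}\succ 0$, so no common kernel exists and the polynomial cannot vanish identically. Your real-coefficients observation (real-valuedness on $\R^n$ forces real coefficients) is also fine.
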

The real stability can be preserved by some differential operators.
\begin{lemma}[{\cite[Theorem 1.3]{borcea2010multivariate}}]\label{le:sta2}
If $p\in \R[z_1,\ldots,z_n]$ is real stable, then for any $c>0$, the polynomial
\[
(1-c\partial_{z_i}^2)p(z_1,\ldots,z_n)
\]
is real stable for all $1\leq i \leq n$.
\end{lemma}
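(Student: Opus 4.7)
The plan is a direct Hermite--Biehler-style reduction to a one-variable statement. Since $1-c\partial_{z_i}^2$ with real $c$ preserves real coefficients, real stability of $(1-c\partial_{z_i}^2)p$ reduces to showing this polynomial has no zero in the open upper half polydisc $\{(z_1,\ldots,z_n)\in\MC^n:\operatorname{Im}(z_j)>0 \text{ for every } j\}$. I would fix an arbitrary such point $(w_1,\ldots,w_n)$ and introduce the one-variable restriction
\[
\tilde{p}(z)\;:=\;p(w_1,\ldots,w_{i-1},z,w_{i+1},\ldots,w_n).
\]
Real stability of $p$ forces $\tilde{p}$ to be a nonzero polynomial whose roots $\alpha_1,\ldots,\alpha_k$ all satisfy $\operatorname{Im}(\alpha_j)\leq 0$, and one checks $\bigl((1-c\partial_{z_i}^2)p\bigr)(w_1,\ldots,w_n)=\bigl((1-c\partial_z^2)\tilde{p}\bigr)(w_i)$. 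Hence it suffices to prove the one-variable claim: if a nonzero polynomial has all roots in the closed lower half plane, so does its image under $1-c\partial_z^2$.

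For the one-variable step I would use the factorization $1-c\partial_z^2=(1-\sqrt{c}\,\partial_z)(1+\sqrt{c}\,\partial_z)$, which is valid because $c>0$, and show each factor preserves the closed lower half plane as a root locus; iterating then yields the claim. Let $w$ be a root of $(1+\sqrt{c}\,\partial_z)\tilde{p}$. If $\tilde{p}(w)=0$ then $w\in\{\alpha_1,\ldots,\alpha_k\}$ and $\operatorname{Im}(w)\leq 0$ automatically. Otherwise, dividing $\tilde{p}(w)+\sqrt{c}\,\tilde{p}'(w)=0$ by $\tilde{p}(w)$ and using the logarithmic-derivative identity $\tilde{p}'/\tilde{p}=\sum_{j=1}^{k}1/(z-\alpha_j)$ yields
\[
-\frac{1}{\sqrt{c}}\;=\;\sum_{j=1}^{k}\frac{1}{w-\alpha_j}.
\]
The imaginary part of the right-hand side equals $-\sum_j(\operatorname{Im}(w)-\operatorname{Im}(\alpha_j))/|w-\alpha_j|^2$, which is strictly negative whenever $\operatorname{Im}(w)>0$ because every summand is then negative; this contradicts the reality of $-1/\sqrt{c}$. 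The identical argument (with $+1/\sqrt{c}$ on the left) handles $(1-\sqrt{c}\,\partial_z)$, and composing the two linear factors completes the one-variable step.

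The main obstacle I expect is finding the right packaging for the reduction: real stability is a multivariate condition, yet the operator differentiates only in $z_i$, so one must convert multivariate non-vanishing into a statement about a root locus of a single-variable polynomial that in general has complex coefficients. Once this reduction is in hand, the logarithmic-derivative inequality is essentially forced by the sign of $\operatorname{Im}(w)-\operatorname{Im}(\alpha_j)$, and the hypothesis $c>0$ enters precisely so that $\sqrt{c}$ is real, keeping the factorization within operators that preserve the real-rooted structure. A secondary subtlety is the degenerate case where $w$ coincides with some root of $\tilde{p}$, which I handle above by the simple observation that such $w$ already lies in the closed lower half plane.
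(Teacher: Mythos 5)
Your argument is correct, and it is genuinely different in character from what the paper does: the paper does not prove this lemma at all, but simply quotes Borcea--Br\"and\'en's general classification of stability-preserving differential operators (Theorem 1.3 of \cite{borcea2010multivariate}, the symbol criterion), under which $1-c\partial_{z_i}^2$ qualifies because its symbol $1-cw_i^2=(1-\sqrt{c}\,w_i)(1+\sqrt{c}\,w_i)$ is a product of real linear forms, hence real stable when $c>0$. You instead give a self-contained elementary proof: the factorization $1-c\partial_{z_i}^2=(1-\sqrt{c}\,\partial_{z_i})(1+\sqrt{c}\,\partial_{z_i})$, the reduction to one variable by freezing the other coordinates at points of the open upper half-plane (your set should be called the open upper half-space rather than a ``polydisc,'' but you define it correctly), and the logarithmic-derivative argument $\tilde{p}'/\tilde{p}=\sum_j 1/(z-\alpha_j)$ with the sign of the imaginary part forcing roots of $\tilde{p}\pm\sqrt{c}\,\tilde{p}'$ into the closed lower half-plane. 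All the small checks are in place or routine: $\tilde{p}\not\equiv 0$ by stability of $p$, the operator does not change the leading coefficient so the intermediate polynomial is again nonzero with the same root-location property, substitution in the other variables commutes with $\partial_{z_i}$, and real coefficients are preserved since $c$ is real. In effect your first-order factorization is the same algebraic fact the symbol criterion exploits, but carried out by hand; the citation buys brevity and vast generality, while your route buys a short, transparent, self-contained verification adequate for the one operator the paper actually needs.
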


We also need the following definition.

\begin{definition}[\cite{Marcus2015InterlacingII}]
\label{Def:above the roots} Let $p(z_1,\ldots,z_n)$ be a multivariate polynomial. We
say that   ~$\vz\in\mathbb{R}^n$ is \textit{above the roots} of $p$ if
\begin{equation*}
 p(\vz+\vt)>0 \quad \text{for all} \quad 
 \vt\in\mathbb{R}^n_{\geq 0}.
\end{equation*}
We use $\mbox{{\bf Ab}}_p$ to denote the set of  points  which are above the roots of
$p$.
\end{definition}

Now we are ready to introduce  the definition of barrier functions. 	
\begin{definition}[\cite{batson2012twice-ramanujan, Marcus2015InterlacingII}]
 Given a real stable polynomial $p(z_1,\ldots,z_n)$ and a point ~$\vz\in \mbox{{\bf Ab}}_p$,
 the \textit{barrier function} of $p$ in direction $i$ at ${\vz}=(z_1,\ldots,z_n)$ is defined as
$$\Phi_p^i({\mathbf z}) = \frac{\partial_{z_i}p(\vz)}{p(\vz)}.$$
\end{definition}

The barrier function of a polynomial $p\in\mathbb{R}[z_1,\ldots,z_n]$ at a point $\mathbf{z}=(z_1,\ldots,z_n)\in\mathbf{Ab}_p$ can be viewed as  quantization of the point $ \mathbf{z}=(z_1,\ldots,z_n) $ lying above the roots of $p$, see the physical explanation for the univariate case in \cite{batson2012twice-ramanujan}. The purpose of  barrier function argument is to  study the relationship between $\mathbf{Ab}_p$ and $\mathbf{Ab}_{F(p)}$  with the help of barrier function, where $F$ is a differential operator. In our problem, the differential operator is $F=1-\frac{1}{2}\partial_{z_i}^2$, see Lemma \ref{lemma:defineinterfaimily}. 
	
In the remainder of this section, we introduce properties of barrier function and connections with the mixed discriminant. The following lemma shows the monotonicity of barrier function in each direction.
\begin{lemma}[{\cite[Lemma 5.8]{Marcus2015InterlacingII}}]\label{le:monoto}
Assume that $p(z_1,\ldots,z_n)$ is a real stable polynomial and  ~$\vz\in \mbox{{\bf
Ab}}_p$. Then
\[
\Phi_p^i(\vz+\vt)\,\,\leq\,\, \Phi_p^i(\vz)
\]
holds for any  $\vt\in\R_{\geq 0}^n$
 and $i\in \{1,\ldots,n\}$.
\end{lemma}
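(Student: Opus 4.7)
The plan is to establish the pointwise inequality $\partial_{z_j}\Phi_p^i(\vz)\leq 0$ at every $\vz\in\mbox{{\bf Ab}}_p$ and every pair $(i,j)$, and then to integrate along the straight-line path from $\vz$ to $\vz+\vt$ to conclude. Since $\Phi_p^i=\partial_{z_i}\log p$, what must be shown is that every entry of the Hessian of $\log p$ on $\mbox{{\bf Ab}}_p$ is non-positive.

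First I would handle the diagonal case $j=i$. Restricting $p$ to the line $\vz+s\ve_i$, real stability forces the restricted univariate polynomial to be real-rooted; since $\vz\in\mbox{{\bf Ab}}_p$, all of its roots $\alpha_1,\ldots,\alpha_m$ are strictly negative, and
\[
\Phi_p^i(\vz+s\ve_i)=\sum_{k}\frac{1}{s-\alpha_k}
\]
is a sum of positive, strictly decreasing functions of $s\geq 0$.

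The genuine work lies in the off-diagonal case $j\neq i$. I would freeze the remaining coordinates at the values they take in $\vz$ and reduce to a bivariate real-stable polynomial $r(s,t):=p(\ldots,s,\ldots,t,\ldots)$. Writing $r(s,t)=c(t)\prod_{k}(s-\alpha_k(t))$ and tracking the (real) roots, the central step is to prove that each $\alpha_k(t)$ is non-increasing as $t$ moves along the real line. This is where the full strength of real stability enters, and I would prove it by a Hurwitz/implicit-function argument: if $\alpha_k'(t_0)>0$ at some simple real root $\alpha_k(t_0)$, then perturbing $t_0$ to $t_0+\delta\i$ for small $\delta>0$ and tracking the root by the implicit function theorem yields a zero of $r$ at $(s,t)$ with $\mathrm{Im}(s)>0$ and $\mathrm{Im}(t)>0$, contradicting the real stability of $r$. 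Combining this root-monotonicity with $\alpha_k(0)<0$ (forced by $\vz\in\mbox{{\bf Ab}}_p$) gives
\[
\Phi_p^i(\vz+t\ve_j)=\sum_{k}\frac{1}{-\alpha_k(t)}\;\leq\;\sum_{k}\frac{1}{-\alpha_k(0)}=\Phi_p^i(\vz),
\]
and iterating over the components of $\vt$ delivers the full statement.

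The main obstacle will be the bivariate root-monotonicity: the univariate case is essentially a Stieltjes-transform argument, but transporting monotonicity across distinct coordinates is exactly where real stability (as opposed to mere real-rootedness of restrictions) is needed. As a sanity check, for the particular polynomial $Q(x,z_1,\ldots,z_n)=\det[x\mathbf{I}+\sum_{k} z_k\tau_k\vv_k\vv_k^*]^2$ to which the authors apply the lemma, one can bypass the Hurwitz step by a direct calculation: with $M(\vz):=x\mathbf{I}+\sum_{k}z_k\tau_k\vv_k\vv_k^*$, differentiating $\Phi_Q^i=2\tau_i\,\tr(M(\vz)^{-1}\vv_i\vv_i^*)$ yields $\partial_{z_j}\Phi_Q^i(\vz)=-2\tau_i\tau_j\,|\vv_i^*M(\vz)^{-1}\vv_j|^2\leq 0$ on the region $M(\vz)\succ 0$, which recovers the conclusion for this class directly from the PSD structure provided by Lemma~\ref{le:sta1}.
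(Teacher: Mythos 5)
Your outline is sound, but note first that the paper contains no proof of this lemma: it is imported verbatim from \cite{Marcus2015InterlacingII} (Lemma 5.8 there), and the proof given there reduces, exactly as you do, to a bivariate restriction, but then invokes the Helton--Vinnikov determinantal representation (as in Theorem \ref{thm:detrep} and Remark \ref{rem:detrep} of this paper): writing the restriction as $\pm\det(s\mathbf{A}+t\mathbf{B}+\mathbf{C})$ with $\mathbf{A},\mathbf{B}\succeq 0$, monotonicity becomes precisely the trace computation you carry out in your closing ``sanity check,'' namely $\partial_t\,\mathrm{Tr}(\mathbf{M}^{-1}\mathbf{A})=-\mathrm{Tr}(\mathbf{M}^{-1}\mathbf{B}\mathbf{M}^{-1}\mathbf{A})\leq 0$ for $\mathbf{M}\succ 0$. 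Your main argument instead proves directly that the roots in one variable of a bivariate real stable polynomial are non-increasing in the other variable, via the Hurwitz/implicit-function perturbation into the upper half-plane; this monotonicity is a true and classical fact, so your route works and is more elementary in that it avoids the deep representation theorem, at the cost of having to track root branches rather than differentiating a resolvent.

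Two points need tightening. First, the implicit-function step as written only covers simple roots and constant $s$-degree: at the finitely many $t$ where roots collide, or where the leading coefficient of $r(\cdot,t)$ vanishes (roots then escape to $-\infty$, as they are confined to the negative axis for $t\geq 0$), you need a supplementary argument --- e.g. pass to the square-free part (each factor of a real stable polynomial is real stable, and $\Phi_p^i$ is additive over factors), note that the discriminant and leading coefficient in $s$ vanish at only finitely many real $t$, and conclude by continuity of the ordered roots; escaping roots contribute $0$ to $\sum_k 1/(-\alpha_k(t))$, so the inequality survives the limit. Second, your remark that the direct PSD computation ``bypasses'' the general lemma for the authors' application is not quite accurate: in the proof of Lemma \ref{Lemma:mainlemma2} the monotonicity is applied to the polynomials $Q_k$ obtained from $Q$ by applying the operators $1-\tfrac{1}{2}\partial_{z_i}^2$, which are real stable but no longer of the form $\det[\cdot]^2$, so the statement for general real stable polynomials (or a determinantal representation of their bivariate restrictions, as in \cite{Marcus2015InterlacingII,bownik2019improved}) is genuinely needed.
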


 We also need the following result about barrier functions.
\begin{lemma}[{\cite[Lemma 4.5]{anari2014}}]\label{le:secbar}
Assume that $p(z_1,\ldots,z_n)$ is a real stable polynomial and  ~$\vz\in \mbox{{\bf Ab}}_p$. Then
\[
\frac{\partial_{z_i}^2p}{p}(\vz)\leq\Phi_p^i(\vz)^2
\]
holds for each $i\in\{1,\ldots,n\}$.
\end{lemma}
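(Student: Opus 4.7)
The plan is to reduce the multivariate claim to the familiar one-variable fact that $\log q$ is concave off the roots of a polynomial $q$ with only real roots. Fix $\vz\in\mbox{{\bf Ab}}_p$ and a direction $i\in\{1,\ldots,n\}$, and introduce the restriction
\[
q(t)\,:=\,p(\vz+t\ve_i),\qquad t\in\mathbb{R}.
\]
My first task is to argue that $q$ is a real-rooted univariate polynomial. This is a standard consequence of real stability: applying Hurwitz's theorem to the family $p(\vz+i\varepsilon\vone+t\ve_i)$ as $\varepsilon\downarrow 0$, and invoking that $p$ has real coefficients (so it is non-vanishing in the open lower half-plane as well), forces every root of $q$ to lie on the real axis.

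Next I exploit $\vz\in\mbox{{\bf Ab}}_p$ to locate these roots. By Definition \ref{Def:above the roots}, $q(t)>0$ for all $t\geq 0$, so $q\not\equiv 0$ and all of its real roots must lie in $(-\infty,0)$. Factoring,
\[
q(t)\,=\,c\prod_j (t-r_j),\qquad r_j<0,\ c>0,
\]
and taking logarithms yields $\log q(t)=\log c + \sum_j \log(t-r_j)$ on $t\geq 0$. Differentiating twice produces
\[
\partial_t^2 \log q(t)\,=\,-\sum_j \frac{1}{(t-r_j)^2}\,\leq\,0,
\]
which is the core inequality driving everything.

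Finally I translate back via the chain-rule identity $\partial_t^2\log q=q''/q-(q'/q)^2$. Evaluating at $t=0$ gives $q(0)=p(\vz)$, $q'(0)=\partial_{z_i}p(\vz)$, and $q''(0)=\partial_{z_i}^2 p(\vz)$, so the inequality above reads exactly
\[
\frac{\partial_{z_i}^2 p(\vz)}{p(\vz)}\,-\,\Phi_p^i(\vz)^2\,\leq\,0,
\]
which is the claim. The only step requiring real care is the real-rootedness of $q$: everything else is elementary partial-fraction calculus in a single variable, so I expect the bulk of the work to go into cleanly invoking stability to justify this initial reduction.
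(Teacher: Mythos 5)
Your argument is correct: restricting to the line $t\mapsto p(\vz+t\ve_i)$, using Hurwitz plus real coefficients to get real-rootedness, locating the roots in $(-\infty,0)$ via $\vz\in\mbox{{\bf Ab}}_p$, and concluding from concavity of $\log q$ that $q''/q\leq (q'/q)^2$ at $t=0$ is exactly the standard proof of this fact. The paper itself does not prove the lemma but quotes it from Anari--Gharan, and their proof proceeds by essentially this same one-variable reduction, so there is nothing to flag.
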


For a univariate quadratic  polynomial,  we have the following result related to its barrier function.
\begin{lemma}[{\cite[Lemma 3.8]{bownik2019improved}}]
\label{Lemma:quadraticbarriernonincreasing}
 Suppose that $s(x)$ is a univariate, quadratic polynomial with positive leading coefficient.  Let $\Phi_s(x) = \frac{s'(x)}{s(x)}$ be its barrier function. Then
\[
f(x)=x-\frac{2}{\Phi_s(x)}
\]
is a nonincreasing function on $ \mbox{{\bf Ab}}_s$.
\end{lemma}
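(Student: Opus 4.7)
The plan is to exploit the fact that a quadratic polynomial with positive leading coefficient has a simple closed-form factorization, which makes both $\Phi_s$ and $f'$ explicit. First I would factor $s(x) = a(x - r_1)(x - r_2)$ with $a > 0$ and roots $r_1,r_2$ (real on the region of interest, since on $\mathbf{Ab}_s$ the polynomial is positive and its logarithmic derivative is well-defined). Then the logarithmic derivative gives the partial-fraction form
\[
\Phi_s(x) \;=\; \frac{1}{x - r_1} + \frac{1}{x - r_2},\qquad \Phi_s'(x) \;=\; -\frac{1}{(x - r_1)^2} - \frac{1}{(x - r_2)^2}.
\]

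Next, I would differentiate $f(x) = x - 2/\Phi_s(x)$ directly to obtain
\[
f'(x) \;=\; 1 + \frac{2\,\Phi_s'(x)}{\Phi_s(x)^2},
\]
so that showing $f'(x) \leq 0$ on $\mathbf{Ab}_s$ reduces to the pointwise inequality $\Phi_s(x)^2 \leq -2\,\Phi_s'(x)$. Substituting the explicit expressions above, and writing $a_i := 1/(x - r_i)$, this becomes $(a_1 + a_2)^2 \leq 2(a_1^2 + a_2^2)$, which is just the expanded form of $(a_1 - a_2)^2 \geq 0$. So the desired monotonicity is ultimately a one-line application of the Cauchy--Schwarz / AM--QM inequality.

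The only subtlety I would need to address is that $f$ is well-defined on all of $\mathbf{Ab}_s$, i.e.\ that $\Phi_s(x) \neq 0$ there. This follows because on $\mathbf{Ab}_s$ the values $x - r_i$ are positive (for real roots this is immediate from the definition of being above the roots; if the roots are complex, one verifies it directly since $s > 0$ everywhere and the factors $a_i$ remain complex conjugates so $\Phi_s$ stays real with the correct sign on the relevant subset). Once $\Phi_s > 0$ on $\mathbf{Ab}_s$ is noted, the previous computation gives $f' \leq 0$ throughout $\mathbf{Ab}_s$, completing the proof. I do not anticipate a serious obstacle here; the main work is simply to carry out the algebra carefully, since the inequality is ultimately a consequence of a perfect square being nonnegative.
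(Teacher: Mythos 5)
Your main computation is correct and is essentially the standard proof: for a real-rooted quadratic $s(x)=a(x-r_1)(x-r_2)$ with $a>0$, on $\mathbf{Ab}_s$ one has $x>r_1,r_2$, so with $a_i=1/(x-r_i)>0$ the identity $f'(x)=1+2\Phi_s'(x)/\Phi_s(x)^2$ together with $(a_1+a_2)^2\le 2(a_1^2+a_2^2)$ gives $f'\le 0$. Note that the paper itself does not prove this lemma but quotes it from Bownik--Casazza--Marcus--Speegle (Lemma 3.8 there), and your partial-fraction argument is the same calculation used in that source.

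The one genuine problem is your closing remark that the complex-root case ``verifies directly.'' It does not, and in fact the statement is false for a quadratic with no real roots: then $s>0$ everywhere, so $\mathbf{Ab}_s=\mathbb{R}$, but for $s(x)=x^2+1$ one gets $\Phi_s(x)=2x/(x^2+1)$ and $f(x)=x-(x^2+1)/x=-1/x$, which is undefined at $0$ and strictly increasing on each of $(-\infty,0)$ and $(0,\infty)$; also $\Phi_s$ changes sign, contradicting your claim that it keeps ``the correct sign.'' Algebraically, with conjugate roots $a_2=\overline{a_1}$ the inequality you need reverses, since $2(a_1^2+a_2^2)=4\bigl((\operatorname{Re}a_1)^2-(\operatorname{Im}a_1)^2\bigr)$ while $(a_1+a_2)^2=4(\operatorname{Re}a_1)^2$. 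So the lemma must be read with the implicit hypothesis that $s$ is real-rooted (equivalently, real stable), which is exactly the situation in which it is invoked in the paper: the univariate restrictions $s(x)=Q_k(\alpha,0,\ldots,0,x,-\delta_{k+2},\ldots,-\delta_n)$ appearing in the proof of Lemma \ref{Lemma:mainlemma2} are real stable and hence real-rooted. State that hypothesis and delete the complex-root paragraph; with that change your proof is complete.
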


The following lemma, which is essential for our proof, shows the behavior of barrier
functions when applying the differential operator $(1-\frac{1}{2}\partial_x^2)$ to a
bivariate quadratic polynomial. We postpone its proof to the end of this subsection.

\begin{lemma}\label{Lemma:QuadraticBarrier}
Assume that  $p(x,y)$ is a bivariate real stable polynomial and  it is quadratic with
respect to $x$. Suppose that $(x_0,y_0)\in \mbox{{\bf Ab}}_p$ and
$(x_0+\delta,y_0)\in \mbox{{\bf Ab}}_{(1-\frac{1}{2}\partial_x^2)p}$. The followings
hold:
\begin{enumerate}[ (i)]
\item If $\delta=1$, then
    $\Phi_{(1-\frac{1}{2}\partial_x^2)p}^y(x_0+\delta,y_0)\leq\Phi_p^y(x_0,y_0)$.
\item If $\delta\in(0,1)$  and the barrier function  satisfies
    $\Phi_p^x(x_0,y_0)\leq \frac{\delta}{1-\delta^2},$ then
 $\Phi_{(1-\frac{1}{2}\partial_x^2)p}^y(x_0+\delta,y_0)\leq\Phi_p^y(x_0,y_0)$.
\end{enumerate}
\end{lemma}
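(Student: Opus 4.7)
The plan is to leverage the quadratic structure of $p$ in $x$ to reduce the barrier-function inequality to an algebraic statement about the roots of $p(\cdot, y_0)$. Since $p(x,y) = A(y)x^2 + B(y)x + C(y)$ with $A(y) > 0$ (by real stability), the operator simplifies to $q(x,y) := (1 - \tfrac{1}{2}\partial_x^2)p(x,y) = p(x,y) - A(y)$. A Taylor expansion in $x$ (which truncates after the quadratic term) gives
\[
q(x_0+\delta, y_0) \;=\; p(x_0, y_0) \,+\, \delta\, p_x(x_0, y_0) \,-\, (1-\delta^2)\, A(y_0),
\]
together with the analogous identity for $\partial_y q(x_0+\delta, y_0)$. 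Cross-multiplying the desired inequality $\Phi^y_q(x_0+\delta, y_0) \leq \Phi^y_p(x_0, y_0)$ (both denominators positive by the assumptions) and cancelling common terms reduces the task at $(x_0, y_0)$ to showing
\[
\delta\bigl(p\, \partial_{xy} p - p_x\, \partial_y p\bigr) \;\leq\; (1-\delta^2)\bigl(p\, A' - A\, \partial_y p\bigr).
\]

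For the next step, I factor $p(x,y) = A(y)\bigl(x-\alpha_1(y)\bigr)\bigl(x-\alpha_2(y)\bigr)$ with $\alpha_1(y)\leq\alpha_2(y)$ real near $y_0$, and set $u := 1/(x_0-\alpha_1(y_0))$ and $v := 1/(x_0-\alpha_2(y_0))$, both positive. Logarithmic differentiation gives
\[
\Phi^y_p(x_0, y_0) \,=\, \tfrac{A'}{A} - u\alpha_1'(y_0) - v\alpha_2'(y_0), \qquad \partial_x\Phi^y_p(x_0, y_0) \,=\, u^2\alpha_1'(y_0) + v^2\alpha_2'(y_0).
\]
Using the identities $p\,\partial_{xy}p - p_x\,\partial_y p = p^2\,\partial_x\Phi^y_p$ and $A/p = uv$, the inequality from the first paragraph simplifies to
\[
u^2\alpha_1'(y_0)\bigl[\delta - (1-\delta^2)v\bigr] \,+\, v^2\alpha_2'(y_0)\bigl[\delta - (1-\delta^2)u\bigr] \;\leq\; 0.
\]

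The key input needed is that real stability of $p$ forces $\alpha_i'(y_0) \leq 0$ \emph{individually}: perturbing $y_0$ to $y_0 + i\epsilon$ with $\epsilon > 0$ small, real stability prevents any zero in the upper half $x$-plane, so $\mathrm{Im}\,\alpha_i(y_0+i\epsilon) = \epsilon\,\alpha_i'(y_0) + o(\epsilon) \leq 0$, giving $\alpha_i'(y_0)\leq 0$. With this in hand, case (i) ($\delta=1$, $1-\delta^2=0$) collapses the displayed inequality to $u^2\alpha_1'(y_0) + v^2\alpha_2'(y_0) \leq 0$, which is immediate (and is also $\partial_x\Phi^y_p \leq 0$, directly from Lemma~\ref{le:monoto}). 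In case (ii), the hypothesis $\Phi^x_p(x_0, y_0) = u+v \leq \delta/(1-\delta^2)$ yields $(1-\delta^2)u, (1-\delta^2)v \leq \delta$, hence both brackets $[\delta - (1-\delta^2)v]$ and $[\delta - (1-\delta^2)u]$ are nonnegative, and each summand becomes a product of a nonpositive factor and a nonnegative one. The main obstacle is the individual-sign assertion $\alpha_i'(y_0)\leq 0$: the weaker combinations $u\alpha_1'+v\alpha_2'\leq 0$ and $u^2\alpha_1'+v^2\alpha_2'\leq 0$ that follow from Lemma~\ref{le:monoto} alone permit configurations with $\alpha_1', \alpha_2'$ of opposite signs in which a direct numerical check shows the reduced inequality can fail, so real stability is used here in an essential way.
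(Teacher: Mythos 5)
Your proposal is correct in substance, but it proves the lemma by a genuinely different route than the paper. The paper follows the Bownik--Casazza--Marcus--Speegle machinery: it writes $(1-\tfrac12\partial_x^2)p(x+\delta,y)=(1+\delta\partial_x+\tfrac12(\delta^2-1)\partial_x^2)p(x,y)$, invokes the Helton--Vinnikov determinantal representation (Theorem~\ref{thm:detrep}) to translate the barrier inequality into the mixed-discriminant condition $a_1\tilde D(\hat{\mathbf{L}})+2a_2\tilde D(\hat{\mathbf{A}},\hat{\mathbf{L}})\geq 0$ via Lemma~\ref{Lemma:barrierequvimixed}, and closes with the Artstein-Avidan--Florentin--Ostrover inequality $\tilde D(\hat{\mathbf{A}})\tilde D(\hat{\mathbf{L}})\geq\tilde D(\hat{\mathbf{A}},\hat{\mathbf{L}})$ (Lemma~\ref{le:Dtil}) together with $\tilde D(\hat{\mathbf{A}})=\Phi^x_p(x_0,y_0)\leq\delta/(1-\delta^2)$. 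You instead exploit the quadratic-in-$x$ structure directly: your identity $q=p-A(y)$, the cross-multiplied reduction, and the factorized form $u^2\alpha_1'[\delta-(1-\delta^2)v]+v^2\alpha_2'[\delta-(1-\delta^2)u]\leq 0$ are all correct (I verified the algebra), and the only non-elementary input is that each root of a real stable polynomial is non-increasing in the other variable, which is a standard and true consequence of stability. What your route buys is a short, self-contained argument avoiding the Lax/Helton--Vinnikov theorem and mixed discriminants entirely; what the paper's route buys is uniformity in degenerate situations that your write-up glosses over: (a) the assertion ``$A(y)>0$ by real stability'' is not justified as stated --- from $(x_0,y_0)\in\mathbf{Ab}_p$ you only get $A(y_0)\geq 0$, and the case $A(y_0)=0$ (degree drop at $y_0$) needs a separate, easy remark (there $q(\cdot,y_0)$ essentially coincides with $p(\cdot,y_0)$ up to the $A'$ term, or one argues by a limiting/perturbation step); and (b) your derivative argument for $\alpha_i'(y_0)\leq 0$ presumes the roots are differentiable at $y_0$, which can fail for the \emph{ordered} roots when $p(\cdot,y_0)$ has a double root --- this is repaired by taking analytic branches from the quadratic formula (the discriminant is nonnegative near $y_0$, hence admits an analytic square root) or by continuity from nearby simple-root points. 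These are patchable technicalities rather than gaps in the idea, so your proof stands as a more elementary alternative to the paper's mixed-discriminant argument.
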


Motivated by the method developed in \cite{bownik2019improved}, we employ the mixed
discriminant to prove Lemma \ref{Lemma:QuadraticBarrier}.
 We next recall  the definition of mixed discriminant as well
as its properties (see \cite{bownik2019improved}).
\begin{definition}\label{mixdis}
Let $\mathbf{X}_1,\ldots,\mathbf{X}_d\in\mathbb{R}^{d\times d}$. The  \textit{mixed discriminant}  of
$\mathbf{X}_1,\ldots,\mathbf{X}_d$ is defined as
$$
D(\mathbf{X}_1,\ldots,\mathbf{X}_d):=\frac{\partial^d}{\partial t_1\ldots\partial t_d}\det\bigg[\sum_{i=1}^dt_i\mathbf{X}_i\bigg].
$$
\end{definition}
For convenience, given a matrix $X$, we set $X[k]:=(\underbrace{X,\ldots,X}_{k})$.
Let $\mathbf{X}_1,\ldots,\mathbf{X}_k\in\mathbb{R}^{d\times d}$   and we set
$$
\tilde{D}(\mathbf{X}_1,\ldots,\mathbf{X}_k):=\frac{D(\mathbf{X}_1,\ldots,\mathbf{X}_k,\mathbf{I}[d-k])}{(d-k)!}.
$$
The following lemma shows an interesting property of $\tilde{D}$.
\begin{lemma}[{\cite[Theorem 1.1]{artsteinavidan2014remarks}}]\label{le:Dtil}
Assume that $\mathbf{X}_1,\mathbf{X}_2\in\mathbb{R}^{d\times d}$ are  positive semidefinite matrices with $d\geq 2$.
Then
\[
\tilde{D}(\mathbf{X}_1)\tilde{D}(\mathbf{X}_2) \geq \tilde{D}(\mathbf{X}_1,\mathbf{X}_2).
\]
\end{lemma}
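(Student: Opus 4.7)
The plan is to reduce the inequality to explicit trace identities by computing both sides in closed form. In fact, I expect that $\tilde{D}(\mathbf{X}) = \tr(\mathbf{X})$ and $\tilde{D}(\mathbf{X}_1,\mathbf{X}_2) = \tr(\mathbf{X}_1)\tr(\mathbf{X}_2) - \tr(\mathbf{X}_1\mathbf{X}_2)$, after which the asserted inequality collapses to the manifestly nonnegative bound $\tr(\mathbf{X}_1 \mathbf{X}_2) \geq 0$ for PSD matrices.

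First, I would unpack $\tilde{D}(\mathbf{X}) = D(\mathbf{X}, \mathbf{I}[d-1])/(d-1)!$ by setting $s = t_2 + \cdots + t_d$ and expanding
\[
\det(t_1 \mathbf{X} + s \mathbf{I}) \;=\; \sum_{k=0}^{d} s^{d-k} t_1^{k}\, e_k(\mathbf{X}),
\]
where $e_k$ is the $k$-th elementary symmetric polynomial in the eigenvalues of $\mathbf{X}$. Extracting the coefficient of $t_1 t_2 \cdots t_d$ forces $k=1$ and then reads off the coefficient of $t_2 \cdots t_d$ from $(t_2+\cdots+t_d)^{d-1}$, which is $(d-1)!$. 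Dividing by $(d-1)!$ gives $\tilde{D}(\mathbf{X}) = e_1(\mathbf{X}) = \tr(\mathbf{X})$.

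Second, I would compute $\tilde{D}(\mathbf{X}_1,\mathbf{X}_2) = D(\mathbf{X}_1,\mathbf{X}_2,\mathbf{I}[d-2])/(d-2)!$ by the same device. With $\mathbf{Y} = t_1 \mathbf{X}_1 + t_2 \mathbf{X}_2$ and $s = t_3 + \cdots + t_d$, Newton's identity $e_2(\mathbf{Y}) = \tfrac{1}{2}(\tr(\mathbf{Y})^2 - \tr(\mathbf{Y}^2))$ shows that the coefficient of $t_1 t_2$ inside $e_2(\mathbf{Y})$ equals $\tr(\mathbf{X}_1)\tr(\mathbf{X}_2) - \tr(\mathbf{X}_1 \mathbf{X}_2)$, while the coefficient of $t_3 \cdots t_d$ in $s^{d-2}$ is $(d-2)!$, which cancels the normalization. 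Subtracting the two closed forms yields
\[
\tilde{D}(\mathbf{X}_1)\tilde{D}(\mathbf{X}_2) - \tilde{D}(\mathbf{X}_1,\mathbf{X}_2) \;=\; \tr(\mathbf{X}_1 \mathbf{X}_2),
\]
and the right-hand side is nonnegative because $\tr(\mathbf{X}_1 \mathbf{X}_2) = \tr(\mathbf{X}_1^{1/2} \mathbf{X}_2 \mathbf{X}_1^{1/2})$ is the trace of a PSD matrix.

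The only delicate step is the combinatorial coefficient extraction: the factorial denominator built into $\tilde{D}$ is precisely what cancels the multinomial weight coming from expanding the $d-k$ repeated $\mathbf{I}$ slots, so the formulas collapse to traces independent of $d$. There is no genuine analytic obstacle beyond careful bookkeeping; once both quantities are written as traces, the positivity of $\tr(\mathbf{X}_1\mathbf{X}_2)$ finishes the argument in one line.
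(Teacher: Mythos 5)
Your computation is correct, and it proves the lemma in a way the paper itself does not attempt: the paper simply cites the statement as Theorem 1.1 of Artstein-Avidan, Florentin and Ostrover \cite{artsteinavidan2014remarks} and gives no internal proof. Your route is an elementary, self-contained verification of exactly the two-matrix case that the paper needs. The coefficient extraction is sound: since $\partial^d/\partial t_1\cdots\partial t_d$ picks out the coefficient of the squarefree monomial $t_1\cdots t_d$, your expansion $\det(t_1\mathbf{X}+s\mathbf{I})=\sum_k s^{d-k}t_1^k e_k(\mathbf{X})$ together with the multinomial count $(d-1)!$ gives $\tilde D(\mathbf{X})=\tr(\mathbf{X})$ (consistent with the identity $\tilde D(\hat{\mathbf{L}})=\Tr(\hat{\mathbf{L}})$ used later in the paper), and the Newton--Girard step gives $\tilde D(\mathbf{X}_1,\mathbf{X}_2)=\tr(\mathbf{X}_1)\tr(\mathbf{X}_2)-\tr(\mathbf{X}_1\mathbf{X}_2)$, so the inequality reduces to $\tr(\mathbf{X}_1\mathbf{X}_2)=\tr(\mathbf{X}_1^{1/2}\mathbf{X}_2\mathbf{X}_1^{1/2})\geq 0$, which is where positive semidefiniteness (and the symmetry it entails) enters; the hypothesis $d\geq 2$ is only needed for $\tilde D(\mathbf{X}_1,\mathbf{X}_2)$ to be defined. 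What the citation buys the paper is generality: the result in \cite{artsteinavidan2014remarks} covers normalized mixed discriminants with more arguments, whereas your argument is tailored to the case of one matrix in each factor. What your argument buys is transparency and a slightly stronger statement, namely the exact identity $\tilde D(\mathbf{X}_1)\tilde D(\mathbf{X}_2)-\tilde D(\mathbf{X}_1,\mathbf{X}_2)=\tr(\mathbf{X}_1\mathbf{X}_2)$ quantifying the gap, which is all that the proof of Lemma \ref{Lemma:QuadraticBarrier} requires.
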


For a bivariate polynomial $p\in\mathbb{R}[x,y]$, we can find a determinantal representation for the polynomial $p$ if $p$ is real stable, which follows by the Lax conjecture proved in \cite{LPR05} by the results in \cite{HV07}.
\begin{theorem}[{\cite[Theorem 1.13]{borcea2010multivariate}}]\label{thm:detrep}
	Suppose that  $p(x,y)$ is a bivariate real stable polynomial of degree $d$. Then there exsit positive semidefinite matrices $\mathbf{A},\mathbf{B}\in\mathbb{R}^{d\times d}$ and symmetric matrix $\mathbf{C}\in\mathbb{R}^{d\times d}$ such that
	\[
		p(x,y)=\pm \det[x\mathbf{A}+y\mathbf{B}+\mathbf{C}]
	\]
\end{theorem}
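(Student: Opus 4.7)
My plan is to deduce this determinantal representation from the Helton--Vinnikov theorem (the resolution of the Lax conjecture for ternary hyperbolic forms), combined with a standard homogenization that converts bivariate real stability into ternary hyperbolicity.

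First, I would homogenize. Let $d = \deg p$ and set
\[
P(x,y,w) \,:=\, w^d\, p(x/w,\, y/w) \;\in\; \R[x,y,w],
\]
which is a homogeneous polynomial of degree $d$ that restores $p$ on the slice $\{w=1\}$. A classical equivalence in the theory of real stable polynomials (see e.g.\ the work of Borcea--Br{\"a}nd{\'e}n) asserts that $p(x,y)$ is real stable if and only if $P(x,y,w)$ is hyperbolic with hyperbolicity cone containing the positive orthant $\R^3_{>0}$; one concrete reference direction is $e=(1,1,1)$. Verifying this equivalence is elementary: zeros of $p$ with positive imaginary parts in both coordinates correspond to non-real zeros of suitable univariate restrictions of $P$ along directions in $\R^3_{>0}$.

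Next, I would apply the Helton--Vinnikov theorem: every hyperbolic ternary form admits a symmetric determinantal representation
\[
P(x,y,w) \,=\, \pm\det(x\mathbf{A} + y\mathbf{B} + w\mathbf{C})
\]
with $\mathbf{A},\mathbf{B},\mathbf{C}$ real symmetric $d \times d$ matrices, in which the matrix paired with any vector in the closed hyperbolicity cone is positive semidefinite. Since that cone contains the positive orthant of $\R^3$, the boundary directions $(1,0,0)$ and $(0,1,0)$ lie in its closure, forcing $\mathbf{A}$ and $\mathbf{B}$ to be positive semidefinite. The matrix $\mathbf{C}$ is only guaranteed to be symmetric, which matches the statement. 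Dehomogenizing by setting $w=1$ then gives $p(x,y) = \pm \det[x\mathbf{A} + y\mathbf{B} + \mathbf{C}]$, as required.

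The decisive obstacle is the Helton--Vinnikov theorem itself. Its proof is genuinely deep: it proceeds by realizing the projective zero set $\{P=0\} \subset \mathbb{P}^2_{\mathbb{C}}$ as a (possibly singular) compact curve and invoking the theory of theta functions on its normalized Riemann surface, together with a Torelli-type argument, to construct the required symmetric pencil. Reproducing this is well beyond the scope of a sketch, which is precisely why the present paper invokes it as a black box via \cite{borcea2010multivariate}. Once Helton--Vinnikov is granted, the remaining steps are routine bookkeeping: the real-stability-to-hyperbolicity translation and the reading off of positive semidefiniteness of $\mathbf{A}$ and $\mathbf{B}$ from the cone condition are direct consequences of the definitions.
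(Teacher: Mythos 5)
Your overall architecture matches the intended derivation: the paper gives no proof of this statement, quoting it from Borcea--Br\"{a}nd\'{e}n with the remark that it follows from the Lax conjecture as established by Helton--Vinnikov, and your plan (homogenize, invoke Helton--Vinnikov as a black box, read off positive semidefiniteness from the cone, dehomogenize) is exactly that route. The problem is that the one step you actually argue --- the ``classical equivalence'' --- is false as stated. It is not true that $p(x,y)$ is real stable if and only if $P(x,y,w)=w^{d}p(x/w,y/w)$ is hyperbolic with respect to $e=(1,1,1)$ with hyperbolicity cone containing $\R^{3}_{>0}$. Take $p(x,y)=xy-1$, which is real stable: then $P(x,y,w)=xy-w^{2}$ has $P(1,1,1)=0$, so $(1,1,1)$ is not even an admissible hyperbolicity direction, and the hyperbolicity cone of $P$ with respect to, say, $(1,1,0)$ is $\{x>0,\ y>0,\ xy>w^{2}\}$, which does not contain $(1,1,2)\in\R^{3}_{>0}$. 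Already $p=x+y-1$ violates the cone claim. So the lemma you rely on to place the positive orthant inside the cone, and hence to conclude $\mathbf{A},\mathbf{B}\succeq 0$, does not hold.

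The correct bridge --- and the one your own heuristic proves if carried out carefully --- is: $p$ of degree $d$ is real stable if and only if $P$ is hyperbolic with respect to every direction $(e_{1},e_{2},0)$ with $e_{1},e_{2}>0$. Indeed, a zero $(x_{0},y_{0})$ of $p$ with both imaginary parts positive produces the non-real root $t=i$ of $t\mapsto P\big((\mathrm{Re}\,x_{0},\mathrm{Re}\,y_{0},1)+t(\mathrm{Im}\,x_{0},\mathrm{Im}\,y_{0},0)\big)$; note the third coordinate of the direction is $0$, not positive. One must also check $P(e_{1},e_{2},0)=p_{d}(e_{1},e_{2})\neq 0$, which holds because the top homogeneous part $p_{d}$ of a bivariate real stable polynomial factors into real linear forms with coefficients of equal sign. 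With this, $(1,0,0)$ and $(0,1,0)$ lie in the closed hyperbolicity cone of $P$ with respect to $(1,1,0)$, and Helton--Vinnikov normalized at $(1,1,0)$ (so that the pencil there is the identity, after rescaling) yields $P=\pm\det(x\mathbf{A}+y\mathbf{B}+w\mathbf{C})$ with $\mathbf{A},\mathbf{B}$ positive semidefinite and $\mathbf{C}$ symmetric, since the pencil evaluated at any point of the closed cone is positive semidefinite; setting $w=1$ finishes. So the repair is standard, but as written the reduction is incorrect, and it is the only part of the argument you did not delegate to the cited deep theorem.
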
	
\begin{remark}\label{rem:detrep}
 	Futhermore, if $(x_0,y_0)\in \mathbf{Ab}_p$, it is observed that $p(x,y)= \det[x\mathbf{A}+y\mathbf{B}+\mathbf{C}]$ and the matrix  $\mathbf{M}:=x_0\mathbf{A}+y_0\mathbf{B}+\mathbf{C}$ is positive definite in \cite[Corollary 3.4]{bownik2019improved}.
\end{remark}
 The following lemma
characterizes the relationship between the mixed determinant and the barrier function of a bivariate polynomial under  transformation of a differential operator.

\begin{lemma}[{\cite[Corollary 3.5, Lemma 3.6]{bownik2019improved}}]
\label{Lemma:barrierequvimixed}
 Suppose that  $p(x,y)$ is a bivariate real stable polynomial and
    $(x_0,y_0)\in {\bf Ab}_p$.  Let $F=\sum_{i=0}^na_i\partial_x^i$ be  a differential operator  with real
    coefficients $\{a_i\}_{i=1}^n$ and let ~$q(x,y)=F(p(x,y))$.
    Set
    \begin{equation}\label{eq:ABL}
    	\hat{\mathbf{A}}:=\mathbf{M}^{-\frac{1}{2}}\mathbf{A}\mathbf{M}^{-\frac{1}{2}},\quad
    	\hat{\mathbf{B}}:=\mathbf{M}^{-\frac{1}{2}}\mathbf{B}\mathbf{M}^{-\frac{1}{2}} \quad \mbox{and} \quad
    	\hat{\mathbf{L}}:=\hat{\mathbf{A}}^{\frac{1}{2}}\hat{\mathbf{B}}\hat{\mathbf{A}}^{\frac{1}{2}},
    \end{equation}
	 where the matrices $\mathbf{A},\mathbf{B}$ and $\mathbf{M}$ are defined in Theorem \ref{thm:detrep} and Remark\ \ref{rem:detrep}.
 Then  the followings hold:
\begin{enumerate}[(i)]
\item  
The barrier function and the mixed discriminant is related as follows:
\[
\Phi_p^x(x_0,y_0)={\tilde D}(\hat{\mathbf{A}}).
\]
\item 
     If $(x_0,y_0) \in \mbox{{\bf Ab}}_p\cap \mbox{{\bf Ab}}_q$, then
$$\Phi_q^y(x_0,y_0) \leq \Phi_p^y(x_0,y_0)$$ if and only if
$$\sum_{i=1}^nia_i\tilde{D}(\hat{\mathbf{A}}[i-1],\hat{\mathbf{L}})\geq 0.$$

\end{enumerate}

\end{lemma}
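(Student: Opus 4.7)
Both parts are read off a single Taylor expansion of $p$ at $(x_0,y_0)$. By Theorem~\ref{thm:detrep} and Remark~\ref{rem:detrep}, write $p(x,y)=\det[x\mathbf{A}+y\mathbf{B}+\mathbf{C}]$ with $\mathbf{M}$ positive definite, so that shifting and conjugating by $\mathbf{M}^{1/2}$ gives $p(x_0+s,y_0+t)=\det(\mathbf{M})\det[\mathbf{I}+s\hat{\mathbf{A}}+t\hat{\mathbf{B}}]$. The polarization formula for mixed discriminants (an immediate consequence of Definition~\ref{mixdis}, grouping $i$ copies of $\hat{\mathbf{A}}$, $j$ copies of $\hat{\mathbf{B}}$, and $d-i-j$ copies of $\mathbf{I}$ among the $d$ matrix arguments of $D$) yields
\[
\det[\mathbf{I}+s\hat{\mathbf{A}}+t\hat{\mathbf{B}}]=\sum_{i+j\le d}\frac{\tilde D(\hat{\mathbf{A}}[i],\hat{\mathbf{B}}[j])}{i!\,j!}\,s^i t^j,
\]
and hence $\partial_x^i\partial_y^j p(x_0,y_0)=\det(\mathbf{M})\,\tilde D(\hat{\mathbf{A}}[i],\hat{\mathbf{B}}[j])$. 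Specialising $(i,j)=(1,0)$ and dividing by $p(x_0,y_0)=\det(\mathbf{M})$ proves (i): $\Phi_p^x(x_0,y_0)=\tilde D(\hat{\mathbf{A}})$.

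\textbf{Reduction of (ii).} Commuting $\partial_y$ past each $\partial_x^i$ gives $q(x_0,y_0)=\det(\mathbf{M})\sum_i a_i\tilde D(\hat{\mathbf{A}}[i])$ and $\partial_y q(x_0,y_0)=\det(\mathbf{M})\sum_i a_i\tilde D(\hat{\mathbf{A}}[i],\hat{\mathbf{B}})$. The hypothesis $(x_0,y_0)\in\mathbf{Ab}_p\cap\mathbf{Ab}_q$ forces $p(x_0,y_0),q(x_0,y_0)>0$, so $\Phi_q^y\le\Phi_p^y$ at $(x_0,y_0)$ is equivalent to $p\,\partial_y q\le q\,\partial_y p$; cancelling the common positive factor $\det(\mathbf{M})^2$ reduces the desired inequality to
\[
\sum_{i=0}^n a_i\bigl[\tilde D(\hat{\mathbf{A}}[i])\,\tilde D(\hat{\mathbf{B}})-\tilde D(\hat{\mathbf{A}}[i],\hat{\mathbf{B}})\bigr]\ge 0.
\]
The whole of (ii) will then follow from the pointwise identity
\[
\tilde D(\hat{\mathbf{A}}[i])\,\tilde D(\hat{\mathbf{B}})-\tilde D(\hat{\mathbf{A}}[i],\hat{\mathbf{B}})=i\,\tilde D(\hat{\mathbf{A}}[i-1],\hat{\mathbf{L}}),\qquad i=0,1,\ldots,n,
\]
since its $i=0$ term is zero and the remaining sum is exactly $\sum_{i=1}^n i\,a_i\,\tilde D(\hat{\mathbf{A}}[i-1],\hat{\mathbf{L}})$.

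\textbf{The key identity (main obstacle).} The plan is to deduce it from the adjugate relation. Rewrite $\mathrm{adj}(\mathbf{I}+u\hat{\mathbf{A}})(\mathbf{I}+u\hat{\mathbf{A}})=\det(\mathbf{I}+u\hat{\mathbf{A}})\,\mathbf{I}$ as $u\,\hat{\mathbf{A}}\,\mathrm{adj}(\mathbf{I}+u\hat{\mathbf{A}})=\det(\mathbf{I}+u\hat{\mathbf{A}})\,\mathbf{I}-\mathrm{adj}(\mathbf{I}+u\hat{\mathbf{A}})$, multiply on the right by $\hat{\mathbf{B}}$, and take the trace. Since $\mathbf{A}$ is positive semidefinite and $\mathbf{M}$ is positive definite, $\hat{\mathbf{A}}$ is psd, so $\hat{\mathbf{A}}^{1/2}$ exists; because $\mathrm{adj}(\mathbf{I}+u\hat{\mathbf{A}})$ is a polynomial in $\hat{\mathbf{A}}$ (by Cayley--Hamilton applied to $\mathbf{I}+u\hat{\mathbf{A}}$), it commutes with $\hat{\mathbf{A}}^{1/2}$, and cyclicity of the trace gives $\mathrm{Tr}[\hat{\mathbf{A}}\,\mathrm{adj}(\mathbf{I}+u\hat{\mathbf{A}})\hat{\mathbf{B}}]=\mathrm{Tr}[\mathrm{adj}(\mathbf{I}+u\hat{\mathbf{A}})\,\hat{\mathbf{A}}^{1/2}\hat{\mathbf{B}}\hat{\mathbf{A}}^{1/2}]=\mathrm{Tr}[\mathrm{adj}(\mathbf{I}+u\hat{\mathbf{A}})\hat{\mathbf{L}}]$. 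This produces the scalar polynomial identity
\[
u\,\mathrm{Tr}[\mathrm{adj}(\mathbf{I}+u\hat{\mathbf{A}})\hat{\mathbf{L}}]=\mathrm{Tr}(\hat{\mathbf{B}})\,\det(\mathbf{I}+u\hat{\mathbf{A}})-\mathrm{Tr}[\mathrm{adj}(\mathbf{I}+u\hat{\mathbf{A}})\hat{\mathbf{B}}].
\]
Applying Jacobi's formula (Lemma~\ref{lemma:dermaxfun}) to $\partial_v\det[\mathbf{I}+u\hat{\mathbf{A}}+v\mathbf{X}]|_{v=0}=\mathrm{Tr}[\mathrm{adj}(\mathbf{I}+u\hat{\mathbf{A}})\mathbf{X}]$ and matching with the expansion in paragraph one identifies, for any Hermitian $\mathbf{X}$, $[u^i]\mathrm{Tr}[\mathrm{adj}(\mathbf{I}+u\hat{\mathbf{A}})\mathbf{X}]=\tilde D(\hat{\mathbf{A}}[i],\mathbf{X})/i!$ and $[u^i]\det(\mathbf{I}+u\hat{\mathbf{A}})=\tilde D(\hat{\mathbf{A}}[i])/i!$. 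Extracting the $u^i$-coefficient of the displayed identity and multiplying by $i!$ produces exactly $i\,\tilde D(\hat{\mathbf{A}}[i-1],\hat{\mathbf{L}})=\tilde D(\hat{\mathbf{A}}[i])\tilde D(\hat{\mathbf{B}})-\tilde D(\hat{\mathbf{A}}[i],\hat{\mathbf{B}})$. The subtle step—and the main obstacle—is pinpointing this cyclicity move, which is what forces the particular symmetric quadratic form $\hat{\mathbf{L}}=\hat{\mathbf{A}}^{1/2}\hat{\mathbf{B}}\hat{\mathbf{A}}^{1/2}$ (rather than $\hat{\mathbf{A}}\hat{\mathbf{B}}$) to appear in the final statement and which crucially relies on the positive semidefiniteness supplied by the determinantal representation of Theorem~\ref{thm:detrep}.
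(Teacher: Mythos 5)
Your proposal is correct. Note that the paper itself gives no proof of this lemma: it is imported verbatim from Bownik--Casazza--Marcus--Speegle \cite{bownik2019improved} (their Corollary 3.5 and Lemma 3.6), so there is nothing in-text to compare against; your argument is a valid self-contained reconstruction in the same spirit as the cited source, namely the Helton--Vinnikov determinantal representation $p(x_0+s,y_0+t)=\det(\mathbf{M})\det[\mathbf{I}+s\hat{\mathbf{A}}+t\hat{\mathbf{B}}]$ together with the identification of Taylor coefficients with normalized mixed discriminants, from which (i) and the reduction of (ii) to $\sum_i a_i\bigl[\tilde D(\hat{\mathbf{A}}[i])\tilde D(\hat{\mathbf{B}})-\tilde D(\hat{\mathbf{A}}[i],\hat{\mathbf{B}})\bigr]\ge 0$ follow exactly as you say (positivity of $p$ and $q$ at $(x_0,y_0)$ is indeed guaranteed by the definition of being above the roots, and $\det\mathbf{M}>0$ by the Remark). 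Your derivation of the key identity $i\,\tilde D(\hat{\mathbf{A}}[i-1],\hat{\mathbf{L}})=\tilde D(\hat{\mathbf{A}}[i])\tilde D(\hat{\mathbf{B}})-\tilde D(\hat{\mathbf{A}}[i],\hat{\mathbf{B}})$ via the adjugate relation, Jacobi's formula and coefficient extraction in $u$ checks out, including the cyclicity step that produces $\hat{\mathbf{L}}=\hat{\mathbf{A}}^{1/2}\hat{\mathbf{B}}\hat{\mathbf{A}}^{1/2}$ (legitimate since $\hat{\mathbf{A}}$ is positive semidefinite and $\mathrm{adj}(\mathbf{I}+u\hat{\mathbf{A}})$ is a polynomial in $\hat{\mathbf{A}}$); the only cosmetic point is the tacit convention that terms with $i$ exceeding the degree $d$ vanish, which is harmless here since $\partial_x^i p\equiv 0$ for such $i$.
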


Now, we are ready to prove Lemma \ref{Lemma:QuadraticBarrier}.

\begin{proof}[Proof of Lemma \ref{Lemma:QuadraticBarrier}]
  Our aim is  to prove
  \begin{equation}\label{eq:technical_lemma}
    \Phi_q^y(x_0,y_0)\leq\Phi_p^y(x_0,y_0),
  \end{equation}
where    $q(x,y)=(1-\frac{1}{2}\partial_x^2)p(x+\delta,y)$.
 Since $p(x,y)$ is quadratic in $x$,  by Taylor expansion, we have
$$
\begin{aligned}
q(x,y)&=p(x+\delta,y)-\frac{1}{2}\partial_x^2 p(x+\delta,y)\\
   &=(1+\delta\partial_x+\frac{1}{2}\delta^2\partial_x^2)p(x,y)-\frac{1}{2}\partial_x^2p(x,y)\\
  &=(a_0 +a_1 \partial_x + a_2 \partial_x^2)p(x,y),
\end{aligned}
$$
  where $a_0 =1, a_1=\delta>0$
and  $a_2=\frac{1}{2}(\delta^2-1)$. According to   Lemma
\ref{Lemma:barrierequvimixed},  (\ref{eq:technical_lemma}) is equivalent to
\begin{equation}
\label{proofl-1}
a_1\widetilde{D}(\hat{\mathbf{L}})+2a_2\widetilde{D}(\hat{\mathbf{A}},\hat{\mathbf{L}})\geq 0.
\end{equation}
Here, $\hat{\mathbf{A}}$ and $\hat{\mathbf{L}}$ are defined  in \eqref{eq:ABL}.
According to the  definition of $\hat{\mathbf{L}}$,   we know that $\hat{\mathbf{L}}$ is a positive
semidefinite matrix. The definition of mixed discriminant implies
\begin{equation}\label{eq:Dpos}
\tilde{D}(\hat{\mathbf{L}})=D(\hat{\mathbf{L}},\mathbf{I}[d-1])/(d-1)!={\rm Tr}(\hat{\mathbf{L}})\geq 0.
\end{equation}

We first consider (i). If $\delta =1$, then $a_2=\frac{1}{2}(\delta^2-1)=0$. Hence,
\eqref{proofl-1} follows from \eqref{eq:Dpos}.

We next turn to (ii), i.e. $\delta\in (0,1)$. If $\tilde{D}(\hat{\mathbf{L}})={\rm Tr}(\hat{
\mathbf{L}})=0$, then $\hat{\mathbf{L}}=0$ which implies $\tilde{D}(\hat{\mathbf{A}},\hat{\mathbf{L}})=0$. So
\eqref{proofl-1} holds. We next consider the case where $\tilde{D}(\hat{\mathbf{L}})>0$.

According to Lemma \ref{Lemma:barrierequvimixed}, we have
\[
\widetilde{D}(\hat{\mathbf{A}})=\Phi_p^x(x_0,y_0)\leq\frac{\delta}{1-\delta^2}.
\]
Hence, we obtain that
\[
\frac{\tilde{D}(\hat{\mathbf{A}},\hat{\mathbf{L}})}{\tilde{D}(\hat{\mathbf{L}})}\leq \widetilde{D}(\hat{\mathbf{A}})\leq\frac{\delta}{1-\delta^2},
\]
which implies \eqref{proofl-1}. Here, the first inequality follows from Lemma
\ref{le:Dtil}.
\end{proof}

\section{ Proof of Theorem \ref{th:main}}\label{sec:proof}

To prove Theorem \ref{th:main}, as discussed in Section \ref{subsec:reviewofKLS}, we
need to bound the largest root of $p_{\emptyset}(x)$  defined in
\eqref{eq:pemp}. The following theorem presents an upper bound of
$\lambda_{\max}(p_{\emptyset})$ and we postpone its proof.
  \begin{theorem}\label{lemma:estimatethemaxroot}
 Let
$\vv_1,\ldots,\vv_n\in \mathbb{C}^d$  and $\tau_1,\ldots,\tau_n> 0 $ such that
  \begin{equation}\label{eq:normalized_condition}
   \sum_{i=1}^n\tau_i^2(\vv_i \vv_i^{*})^2\preceq \mathbf{I}.
  \end{equation}
Then the largest root of the polynomial
    \begin{equation}\label{eq:pemptyset}
      p_{\emptyset}(x) :=\prod_{i=1}^n\Bigg(1-\frac{1}{2}\partial_{z_i}^2\Bigg)\Bigg{|}_{z_i=0}\det\bigg[x\mathbf{I}+\sum_{i=1}^n z_i\tau_i \vv_i \vv_i^{*}\bigg]^2
    \end{equation}
is at most $3$.
  \end{theorem}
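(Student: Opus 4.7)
The strategy is an inductive barrier-function argument in the spirit of Marcus--Spielman--Srivastava, sharpened by exploiting the fact that $Q$ is quadratic in each $z_i$ (Lemma~\ref{lemma:Qisquadratic}) together with the quadratic-barrier inequality of Bownik--Casazza--Marcus--Speegle (Lemma~\ref{Lemma:QuadraticBarrier}). It is precisely this quadraticity that distinguishes our argument from the one in Kyng--Luh--Song and lets us replace their constant $4$ by $3$.

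I first verify the structural prerequisites. The polynomial $Q(x,z_1,\ldots,z_n)=\det[x\mathbf{I}+\sum_i z_i\tau_i\vv_i\vv_i^*]^2$ is real stable by Lemma~\ref{le:sta1} and the fact that products of real-stable polynomials are real stable. The operators $1-\tfrac{1}{2}\partial_{z_i}^2$ preserve real stability (Lemma~\ref{le:sta2}), so if I set $T_k:=\prod_{i=1}^{k}(1-\tfrac{1}{2}\partial_{z_i}^2)Q$, then every $T_k$ is real stable; moreover $T_0=Q$ and $T_n(x,0,\ldots,0)=p_\emptyset(x)$. For a real stable $T_n$, the statement $(x_*,0,\ldots,0)\in \mathbf{Ab}_{T_n}$ immediately yields $\lambda_{\max}(p_\emptyset)\leq x_*$, so the target reduces to producing such a point with $x_*\leq 3$. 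The initial barriers, computed via Jacobi's formula (Lemma~\ref{lemma:dermaxfun}), are
\[
\Phi_Q^{z_i}(x_0,0,\ldots,0)=\frac{2\tau_i\|\vv_i\|^{2}}{x_0}.
\]
The hypothesis $\sum_i\tau_i^2(\vv_i\vv_i^*)^2\preceq\mathbf{I}$ forces $\tau_i^2\|\vv_i\|^{4}\leq 1$ (each summand is positive semidefinite, so its operator norm $\tau_i^{2}\|\vv_i\|^{4}$ is bounded by $1$), hence $\tau_i\|\vv_i\|^{2}\leq 1$ and every initial barrier is at most $2/x_0$.

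The inductive step passes from $T_k$ to $T_{k+1}=(1-\tfrac{1}{2}\partial_{z_{k+1}}^{2})T_k$. I would apply Lemma~\ref{Lemma:QuadraticBarrier} with $z_{k+1}$ playing the role of ``$x$'': shifting the reference point in the $z_{k+1}$-coordinate by some $\delta_{k+1}\in(0,1]$, the lemma guarantees that the remaining barriers (in directions $z_j$ with $j\neq k+1$) do not increase, provided either case (i) ($\delta_{k+1}=1$, no additional hypothesis) or case (ii) ($\delta_{k+1}\in(0,1)$ with $\Phi_{T_k}^{z_{k+1}}\leq \delta_{k+1}/(1-\delta_{k+1}^{2})$) applies. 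Simultaneously, Lemma~\ref{le:secbar} controls positivity: since $\tfrac{1}{2}\partial_{z_{k+1}}^{2}T_k/T_k\leq \tfrac{1}{2}(\Phi_{T_k}^{z_{k+1}})^{2}$, whenever the current $z_{k+1}$-barrier is strictly below $\sqrt{2}$ at the reference point the shifted point remains in $\mathbf{Ab}_{T_{k+1}}$. Combining these at every step, the barriers stay uniformly bounded by $2/x_0<\sqrt{2}$ (for $x_0>\sqrt{2}$), so the induction runs through all $n$ iterations.

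The main obstacle is the bookkeeping needed to translate the shifted endpoint produced by the iteration into the desired statement $(x_*,0,\ldots,0)\in \mathbf{Ab}_{T_n}$ with $x_*\leq 3$. The improvement over Kyng--Luh--Song hinges on the two cases of Lemma~\ref{Lemma:QuadraticBarrier}: the ``free'' shift of size $1$ in case~(i) permits steps for which the generic MSS barrier update would require extra room, while the sharper case~(ii) ratio $\delta/(1-\delta^2)$ lets us exploit small barriers more aggressively than the $1-1/\delta$-style MSS bound used in Kyng--Luh--Song. Once $x_0$ and the sequence $\{\delta_{k+1}\}$ are optimized to minimize the cumulative budget, one obtains $x_*\leq 3$, hence $\lambda_{\max}(p_\emptyset)\leq 3$, completing the proof.
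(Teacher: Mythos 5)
Your structural setup matches the paper's strategy (real stability of $Q_k$, quadraticity in each $z_i$, Lemma \ref{le:secbar} plus monotonicity for positivity, Lemma \ref{Lemma:QuadraticBarrier} for the other directions), but the quantitative core is missing and, as set up, the walk runs in the wrong direction. You start at $(x_0,\mathbf{0})$ and shift each $z$-coordinate \emph{upward} by $\delta_{k+1}$, so after $n$ steps you only know above-the-rootsness at a point $(x_0,\delta_1,\ldots,\delta_n)$ with strictly positive $z$-entries. Since $\mbox{{\bf Ab}}_{Q_n}$ is upward-closed, this is strictly weaker than the statement you need, namely $(x_*,\mathbf{0})\in\mbox{{\bf Ab}}_{Q_n}$: the orthant above $(x_*,\mathbf{0})$ contains points with $z_i<\delta_i$ about which your conclusion says nothing, and once the operators $1-\tfrac12\partial_{z_i}^2$ have been applied, $Q_n$ is no longer a determinant, so there is no comparison allowing you to trade an increase of $x$ for a decrease of the $z_i$. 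The "bookkeeping" you defer is therefore not bookkeeping but the missing idea. The paper resolves it by reversing the walk: fix $x=3$ throughout, set $\delta_i=\tau_i\|\vv_i\|^2\le 1$, start at $\vw_0=(-\delta_1,\ldots,-\delta_n)$, and shift $z_{k+1}$ from $-\delta_{k+1}$ up to $0$ exactly when $1-\tfrac12\partial_{z_{k+1}}^2$ is applied, so that the endpoint is $(3,\mathbf{0})$ and $Q_n(x,\mathbf{0})=p_\emptyset(x)$ finishes the argument.

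This reversed setup is also where the hypothesis \eqref{eq:normalized_condition} and the constant $3$ actually enter, which your proposal never derives (you delegate it to an unspecified optimization). At the initial point one has $Q(3,\vw_0)=\det\big[3\mathbf{I}-\sum_i(\tau_i\vv_i\vv_i^*)^2\big]^2$ and $3\mathbf{I}-\sum_i(\tau_i\vv_i\vv_i^*)^2\succeq 2\mathbf{I}$, which gives both $(3,\vw_0)\in\mbox{{\bf Ab}}_{Q}$ and the initial barrier bound $\Phi_Q^i(3,\vw_0)\le 2\cdot\tfrac12\,\tau_i\|\vv_i\|^2=\delta_i$; the requirement $\tfrac{2}{x_0-1}\le 1$ is precisely what forces $x_0=3$. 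The bound $\Phi_{Q_k}^{k+1}(\alpha,\vw_k)\le\delta_{k+1}\le\delta_{k+1}/(1-\delta_{k+1}^2)$ is then exactly the hypothesis of Lemma \ref{Lemma:QuadraticBarrier} at every step (case (i) covering $\delta_{k+1}=1$), and the paper propagates it inductively via its Lemma \ref{Lemma:mainlemma2}. Relatedly, your claim that "the barriers stay uniformly bounded by $2/x_0<\sqrt2$" across all steps is not justified as stated: the needed control is $\Phi_{Q_k}^{k+1}(\alpha,\vw_{k+1})<\sqrt2$ at the \emph{shifted} point, which the paper obtains from the univariate quadratic-barrier lemma (Lemma \ref{Lemma:quadraticbarriernonincreasing}) applied to the restriction $s(x)$, using the inductively maintained bound $\Phi_{Q_k}^{k+1}(\alpha,\vw_k)\le\delta_{k+1}$, not by carrying the initial estimate along unchanged. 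Your correct observations (the initial barrier value $2\tau_i\|\vv_i\|^2/x_0$ and $\tau_i\|\vv_i\|^2\le1$) are compatible with this, but the argument as proposed does not close.
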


 Using this theorem, we next present the proof of Theorem \ref{th:main}.
\begin{proof}[Proof of Theorem \ref{th:main}]
Set $\vv_i=\frac{\vu_i}{\sqrt{\sigma}}$ for $i=1,\ldots,n$. Then the vectors $\vv_1,\ldots,\vv_n$
satisfy $\bigg\|\sum\limits_{i=1}^n{\mathbf{Var}}[\xi_i](\vv_i\vv_i^*)^2\bigg\|=1$. Recall that
\[
p_{\varepsilon_1,\ldots,\varepsilon_n}(x)
=\prod_{j=1}^n \mathbb{P}(\xi_j=\varepsilon_j) \det\bigg[x^2\mathbf{I}-\bigg(\sum_{i=1}^n\big(\mathbb{E}[\xi_i]-\varepsilon_i\big)
  \vv_i\vv_i^*\bigg)^2\bigg]
\]
and
\[
\bigg\|\sum\limits_{i=1}^n\mathbb{E}[\xi_i]\vv_i\vv_i^*-\sum\limits_{i=1}^n\varepsilon_i\vv_i\vv_i^*\bigg\|
=\lambda_{\max}( p_{\varepsilon_1,\ldots,\varepsilon_n}).
\]
 According to Lemma \ref{lemma:defineinterfaimily}, $\{
p_{\varepsilon_1,\ldots,\varepsilon_n}:(\varepsilon_1,\ldots,\varepsilon_n)\in \cS_1\times
\cdots\times  \cS_n\}$ form an interlacing family where $\cS_j$ is the support of
$\xi_j$. Combining   Lemmas \ref{lemma:propertiesofinterfamily}, \ref{lemma:defineinterfaimily} and Theorem
\ref{lemma:estimatethemaxroot}, we obtain that there exists
$(\varepsilon_1,\ldots,\varepsilon_n)\in \cS_1\times \cdots\times  \cS_n$ such that
\[
\lambda_{\max}(p_{\varepsilon_1,\ldots,\varepsilon_n})\leq \lambda_{\max}(p_{\emptyset})\leq 3.
\]
Hence, there exists a choice of outcomes $\varepsilon_1,\ldots,\varepsilon_n$ such that
$$
\bigg\|\sum\limits_{i=1}^n\mathbb{E}[\xi_i]\vv_i\vv_i^*-\sum\limits_{i=1}^n\varepsilon_i\vv_i\vv_i^*\bigg\|\leq 3,
$$
which implies
$$
\bigg\|\sum_{i=1}^n\mathbb{E}[\xi_i]\vu_i\vu_i^*-\sum_{i=1}^n\varepsilon_i\vu_i\vu_i^*\bigg\|\leq 3\sigma.
$$
\end{proof}

The rest of this section aims to prove Theorem \ref{lemma:estimatethemaxroot}. 
Our proof adapts the multivariate barrier argument developed  by Marcus, Spielman,
and Srivastava in \cite{Marcus2015InterlacingII}. Recall that
$$
Q(x,z_1,\ldots,z_n)\,\,=\,\,\det\bigg[x\mathbf{I}+\sum_{i=1}^n z_i\tau_i \vv_i \vv_i^{*}\bigg]^2.
$$
We set
\begin{equation}
\label{eq:w_k}
    Q_k(x,z_1,\ldots,z_n)\,\,:=\,\,\prod_{i=1}^k \big(1-\frac{1}{2}\partial_{z_i}^2\big)Q(x,z_1,\ldots,z_n), \ \ k=1,\ldots,n.
\end{equation}
A simple observation is that $Q_n(x,0,\ldots,0)=p_{\emptyset}(x)$.  We set
\[
\delta_i \,\,:=\,\, \tau_i \vv_i^{*} \vv_i, i=1,2,\ldots,n,
\]
and
\[
\vw_k:=(0,\ldots,0,-\delta_{k+1},\ldots,-\delta_n)\in\mathbb{R}^n, \quad k=0,1, \ldots, n-1.
\]
We also set   $\vw_n={\bf 0}\in\mathbb{R}^n$.
According to (\ref{eq:normalized_condition}), we know  $\delta_i\in (0,1]$. Lemma \ref{lemma:Qisquadratic} shows that $Q(x,z_1,\ldots,z_n)$ is quadratic with respect
to each ~$z_i$ and hence  $Q_k$ is also quadratic with respect to each $z_i$.
Combining Lemma \ref{le:sta1} and Lemma \ref{le:sta2}, we obtain that ~$Q_k$ is a
real stable polynomial.


We study the conversion of the barrier function of polynomial transformed by the operator $1-\frac{1}{2}\partial_{z_i}$ as the following lemma.
To state our proof clearly,  we postpone its proof until  the end of this section.
\begin{lemma}\label{Lemma:mainlemma2}
Let $k$ be an integer with $0\leq k\leq n-1$ and $\vw_k$ be defined as in
(\ref{eq:w_k}). Suppose that $\alpha\in\mathbb{R}$ satisfies $(\alpha, \vw_k)\in
\mbox{{\bf Ab}}_{Q_k}$   and   $$\Phi_{Q_k}^{k+1}(\alpha,\vw_k)\leq \delta_{k+1}.$$
Then $(\alpha,\vw_{k+1})\in \mbox{{\bf Ab}}_{Q_{k+1}}$ and
  $$
  \Phi_{Q_{k+1}}^j(\alpha,\vw_{k+1})\leq \Phi_{Q_k}^j(\alpha,\vw_k),
  $$
   where $j$ is an integer with $k+1<j\leq n$.
\end{lemma}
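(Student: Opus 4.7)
The plan is to establish the two conclusions in order, with the above-the-roots statement serving as the prerequisite for the barrier inequality. Both arguments lean on three structural facts already in place: $Q_k$ is real stable (by Lemmas~\ref{le:sta1} and~\ref{le:sta2}); $Q_k$ is quadratic in each $z_i$ (by Lemma~\ref{lemma:Qisquadratic}); and the shift $\vw_{k+1}-\vw_k=\delta_{k+1}\ve_{k+1}$ occurs precisely in the coordinate of the differential operator used to pass from $Q_k$ to $Q_{k+1}$.

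For the above-the-roots conclusion $(\alpha,\vw_{k+1})\in \mathbf{Ab}_{Q_{k+1}}$, I would fix an arbitrary $\vt=(t_x,t_1,\dots,t_n)\in \R^{n+1}_{\geq 0}$ and reduce to a univariate problem by setting
$$
g(u):=Q_k(\alpha+t_x,\,t_1,\dots,t_k,\,u,\,-\delta_{k+2}+t_{k+2},\dots,-\delta_n+t_n).
$$
Then $g$ has degree at most two in $u$, and $g(u)>0$ for all $u\geq -\delta_{k+1}$ by hypothesis; moreover Lemma~\ref{le:monoto} gives $\Phi_g(-\delta_{k+1})\leq \Phi_{Q_k}^{k+1}(\alpha,\vw_k)\leq \delta_{k+1}$. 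When $\deg g\leq 1$, $g-\tfrac12 g''=g>0$ on $[0,\infty)$ automatically. When $\deg g=2$, writing $g(u)=a(u-r_1)(u-r_2)$ with $a>0$ and $r_1\leq r_2<-\delta_{k+1}$, and setting $s=-\delta_{k+1}-r_1$, $t=-\delta_{k+1}-r_2$, the barrier bound reads $\tfrac{1}{s}+\tfrac{1}{t}\leq \delta_{k+1}$ with $s,t>0$; an elementary optimization shows $(s+\delta_{k+1})(t+\delta_{k+1})\geq \tfrac{(\delta_{k+1}^2+2)^2}{\delta_{k+1}^2}>1$, which is equivalent to $g(0)>\tfrac12 g''(0)$. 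Since $g-\tfrac12 g''$ is a quadratic with the same positive leading coefficient $a$, this places $0$ above its largest root, whence $g(t_{k+1})-\tfrac12 g''(t_{k+1})>0$ for every $t_{k+1}\geq 0$.

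For the barrier inequality, I would pass to a bivariate slice. For each $j\in\{k+2,\dots,n\}$, define
$$
p_j(u,v):=Q_k(\alpha,0,\dots,0,u,-\delta_{k+2},\dots,-\delta_{j-1},v,-\delta_{j+1},\dots,-\delta_n),
$$
with $u,v$ in positions $k+1$ and $j$. Specialization of real stable polynomials to real values preserves stability, so $p_j$ is bivariate real stable, and it is quadratic in $u$. The hypothesis yields $(-\delta_{k+1},-\delta_j)\in\mathbf{Ab}_{p_j}$; the previous step yields $(0,-\delta_j)\in\mathbf{Ab}_{(1-\frac12\partial_u^2)p_j}$; and $\Phi_{p_j}^u(-\delta_{k+1},-\delta_j)=\Phi_{Q_k}^{k+1}(\alpha,\vw_k)\leq \delta_{k+1}$. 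The hypotheses of Lemma~\ref{Lemma:QuadraticBarrier} are thus met: if $\delta_{k+1}=1$ apply part~(i); if $\delta_{k+1}\in(0,1)$ the trivial inequality $\delta_{k+1}\leq \delta_{k+1}/(1-\delta_{k+1}^2)$ activates part~(ii). Either way $\Phi_{(1-\frac12\partial_u^2)p_j}^v(0,-\delta_j)\leq \Phi_{p_j}^v(-\delta_{k+1},-\delta_j)$, which unpacks to the desired $\Phi_{Q_{k+1}}^j(\alpha,\vw_{k+1})\leq \Phi_{Q_k}^j(\alpha,\vw_k)$.

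The main obstacle is the univariate quadratic calculation in Step~1: the hypothesis $\Phi^{k+1}_{Q_k}(\alpha,\vw_k)\leq \delta_{k+1}$ sits below but very close to the threshold $\delta_{k+1}/(1-\delta_{k+1}^2)$ appearing in Lemma~\ref{Lemma:QuadraticBarrier}(ii), so above-the-roots after a shift of exactly $\delta_{k+1}$ is essentially tight and must be verified by honest minimization of $(s+\delta_{k+1})(t+\delta_{k+1})$ over the admissible root configurations. The real stability of the bivariate slices and the invocation of Lemma~\ref{Lemma:QuadraticBarrier} in Step~2 are then comparatively routine.
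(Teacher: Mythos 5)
Your proposal is correct, and its second half is exactly the paper's argument: the same bivariate slice $p_{k,j}$, the same verification that $(-\delta_{k+1},-\delta_j)\in\mathbf{Ab}_{p_{k,j}}$ and $(0,-\delta_j)\in\mathbf{Ab}_{(1-\frac12\partial_u^2)p_{k,j}}$, and the same invocation of Lemma \ref{Lemma:QuadraticBarrier} with the split $\delta_{k+1}=1$ versus $\delta_{k+1}\in(0,1)$. The first half, however, takes a genuinely different route. The paper first bounds $\Phi_{Q_k}^{k+1}(\alpha,\vw_{k+1})<\sqrt2$ using Lemma \ref{Lemma:quadraticbarriernonincreasing}, and then deduces positivity of $Q_{k+1}$ at all shifted points from the second-order barrier estimate of Lemma \ref{le:secbar} combined with the monotonicity of Lemma \ref{le:monoto}. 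You instead freeze a general nonnegative shift $\vt$, pass to the univariate quadratic slice $g$, and prove $g-\tfrac12 g''>0$ on $[0,\infty)$ by an explicit root computation under the constraint $\tfrac1s+\tfrac1t\le\delta_{k+1}$. This is more elementary and avoids Lemmas \ref{Lemma:quadraticbarriernonincreasing} and \ref{le:secbar} altogether, at the cost of a small case analysis on $\deg_u g$; you should also state explicitly that $g$ is real-rooted (it is a real specialization of the real stable $Q_k$), since your factorization $g(u)=a(u-r_1)(u-r_2)$ relies on it — you note this fact only for the bivariate slices in your second step.

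Two small repairs. First, the inference ``$g(0)>\tfrac12 g''(0)$ and positive leading coefficient, hence $0$ lies above the largest root of $g-\tfrac12 g''$'' is not valid as stated: a quadratic that is positive at a point could equally have that point below its smallest root. The conclusion is true here, but it needs the location of the roots; the cleanest fix is to bypass the root discussion entirely: for every $u\ge 0$ one has $u-r_i\ge -r_i>0$, so $(u-r_1)(u-r_2)\ge r_1r_2=(s+\delta_{k+1})(t+\delta_{k+1})>1$, giving $g(u)-\tfrac12 g''(u)>0$ on all of $[0,\infty)$ directly. Second, your remark that the optimization is ``essentially tight'' is inaccurate: since $\delta_{k+1}\in(0,1]$, the minimum $\tfrac{(\delta_{k+1}^2+2)^2}{\delta_{k+1}^2}$ is at least $9$, far above the threshold $1$ (even the crude bounds $s,t\ge 1/\delta_{k+1}$ suffice); this does not affect correctness, only the commentary.
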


 Now we are ready to prove Theorem \ref{lemma:estimatethemaxroot}.

 \begin{proof}[Proof of Theorem \ref{lemma:estimatethemaxroot}]
   Plugging $(3,\vw_0)$ into $Q$ and noting that   $\sum_{i=1}^n\tau_i^2(\vv_i\vv_i^*)^2\preceq \mathbf{I}$,
    we have
$$
 Q(3,\vw_0)=Q(3,-\delta_1,\ldots,-\delta_n)
  =\det\bigg[3\mathbf{I} -\sum_{i=1}^n\tau_i^2(\vv_i\vv_i^*)^2\bigg]^2
   \geq\det[2\mathbf{I}]^2>0,
$$
which implies that the  initial point  $(3,\vw_0)\in \mbox{{\bf Ab}}_Q$.
 The barrier function  of $Q$ at $(3,\mathbf{w}_0)$ in direction $i$ is
$$
\begin{aligned}
 \Phi_{Q}^i(3,\vw_0)&=\frac{\partial_{z_i}Q}{Q}\Bigg|_{(x,\vz)=(3,\vw_0)}\\
 &=\frac{2\det[x\mathbf{I}+\sum_{j=1}^nz_j\tau_j\vv_j\vv_j^*]
 \cdot\frac{\partial}{\partial z_i}\det[x\mathbf{I}+\sum_{j=1}^nz_j\tau_j\vv_j\vv_j^*]}
 {\det[x\mathbf{I}+\sum_{j=1}^nz_j\tau_j\vv_j\vv_j^*]^2}\Bigg|_{(x,\vz)=(3,\vw_0)}\\
  &=2\Tr \Bigg(\Big(x\mathbf{I}+\sum_{j=1}^n z_j\tau_j\vv_j\vv_j^{*}\Big)^{-1}\tau_i\vv_i\vv_i^{*}\Bigg)\Bigg|_{(x,\vz)=(3,\vw_0)} \\
   &=2\Tr\Bigg(\Big(3\mathbf{I}-\sum_{j=1}^n(\tau_j\vv_j\vv_j^*)^2\Big)^{-1}\tau_i\vv_i\vv_i^*\Bigg)\\
    &\leq 2 \Tr (2^{-1}\tau_i \vv_i \vv_i^{*})=\tau_i\vv_i^*\vv_i=\delta_i,
\end{aligned}
$$
where the third equality follows from Lemma \ref{lemma:dermaxfun}. Hence, we have
   $ \Phi_{Q}^i(3,\vw_0)\leq \delta_i$ for any
$i=1,\ldots,n$. According to  Lemma \ref{Lemma:mainlemma2}, we obtain that
~$(3,\vw_1)\in \mbox{{\bf Ab}}_{Q_1}$ and
$\Phi^i_{Q_1}(3,\vw_1)\leq\Phi^i_{Q}(3,\vw_0)\leq \delta_i$ for any $i=2,\ldots,n$.

Repeating this argument for each $i\in\{2,\ldots,n\}$, we conclude that
\[
(3,\vw_n)=(3,0,\ldots,0)\in \mbox{{\bf Ab}}_{Q_n}.
\]
 The definition of
$Q_n$ implies  $Q_n(x,0,\ldots,0)=p_{\emptyset}(x)$. Hence, we have
$\lambda_{\max}(p_{\emptyset})\leq 3$.
 \end{proof}



  \begin{proof}[Proof of Lemma \ref{Lemma:mainlemma2}]
Firstly, we  will show that $(\alpha,\vw_{k+1})\in \mbox{{\bf Ab}}_{Q_{k+1}}$.
According to Definition \ref{Def:above the roots}, it is sufficient to show that
\begin{equation}
\label{proofl321}
Q_{k+1}((\alpha,\vw_{k+1})+\vt)=(1-\frac{1}{2}\partial_{z_{k+1}}^2)Q_k((\alpha,\vw_{k+1})+\vt)>0
\end{equation}
for any $\vt\in\R_{\geq 0}^{n+1}$.
 Noting that $(\alpha,\vw_{k})\in \mbox{{\bf Ab}}_{Q_k}$ and
  $\vw_{k+1}=\vw_{k}+\delta_{k+1}\ve_{k+1}, \delta_{k+1}>0$, we have
  $(\alpha,\vw_{k+1})\in \mbox{{\bf Ab}}_{Q_k}$. Here, we use $\ve_j,j = 1, \ldots,n$
  to denote a vector in $\mathbb{R}^{n}$ whose $j$-th entry is $1$ and other
  entries are
$0$. So $Q_k((\alpha,\vw_{k+1})+\vt)>0$ for any 
$\vt\in\R_{\geq 0}^{n+1}$.
 We claim
\begin{equation}\label{eq:claims2}
\Phi_{Q_k}^{k+1}(\alpha,w_{k+1})< \sqrt 2.
\end{equation}
Hence, we have
\begin{equation*}
	\begin{aligned}
\frac{\partial_{z_{k+1}}^2Q_k}{Q_k}((\alpha,\vw_{k+1})+\vt)&
\leq
  \big(\Phi_{Q_k}^{k+1}((\alpha,\vw_{k+1})+\vt)\big)^2\\
 &\leq \big(\Phi_{Q_k}^{k+1}(\alpha,\vw_{k+1})\big)^2< 2,
    \end{aligned}
\end{equation*}
which implies (\ref{proofl321}).
 Here, the first line follows from Lemma \ref{le:secbar} 
  and the second line follows from Lemma \ref{le:monoto}.

We still need to prove (\ref{eq:claims2}). Set
\[
s(x)\,\, :=\,\,Q_k(\alpha,\underbrace{0,\ldots,0}_k,x,-\delta_{k+2},\ldots,-\delta_n),
\]
which is quadratic polynomial and real stable. Recall that $(\alpha,w_k),(\alpha,w_{k+1})\in
\mbox{{\bf Ab}}_{Q_k}$. So, we have  $-\delta_{k+1}\in \mbox{{\bf Ab}}_s$  and $0\in
\mbox{{\bf Ab}}_s$. The definition of $s(x)$ also shows that
\begin{equation*}
  \begin{array}{ll}
    \Phi_s(0)=\Phi_{Q_k}^{k+1}(\alpha,\vw_{k+1})
  \end{array}
  \hspace{1em}
  \text{and}
  \hspace{1em}
  \begin{array}{ll}
    \Phi_s(-\delta_{k+1})=\Phi_{Q_k}^{k+1}(\alpha,\vw_k).
  \end{array}
\end{equation*}
We have
\begin{equation*}
\begin{aligned}
\Phi_{Q_k}^{k+1}(\alpha,\vw_{k+1})&=\Phi_s(0)\\
&\leq \frac{2\Phi_s(-\delta_{k+1})}{\delta_{k+1}\Phi_s(-\delta_{k+1})+2}\\
&<\sqrt{2},
\end{aligned}
\end{equation*}
which implies (\ref{eq:claims2}). Here, the second line follows from Lemma
\ref{Lemma:quadraticbarriernonincreasing}, i.e.,
$$
-\delta_{k+1}-\frac{2}{\Phi_s(-\delta_{k+1})}\geq 0-\frac{2}{\Phi_s(0)},
$$
and the third line follows from
$\Phi_s(-\delta_{k+1})=\Phi_{Q_k}^{k+1}(\alpha,w_k)\leq \delta_{k+1}<
\frac{2}{\sqrt 2-\delta_{k+1}} $.

We next prove that $\Phi_{Q_{k+1}}^j(\alpha,w_{k+1})\leq \Phi_{Q_k}^j(\alpha,w_k)$
for $k+1<j \leq n$. Let
\[
p_{k,j}(x,y) =
Q_k(\alpha,0,\cdots,0,\mathop{\underline{x}}\limits_{k+1},-\delta_{k+2},\cdots,-\delta_{j-1},
\mathop{\underline{y}}\limits_{j},-\delta_{j+1},\ldots,-\delta_{n}),
\]
which is quadratic with respect to $x$
and real stable. According to
$(\alpha,\vw_{k})\in \mbox{{\bf Ab}}_{Q_{k}}$, we have
  $(x_0,y_0)\in \mbox{{\bf Ab}}_{p_{k,j}}$ where $(x_0,y_0)=(-\delta_{k+1},-\delta_{j})\in
  \mathbb{R}^2$.
   By the above the discussion, we know that
$(x_0+\delta_{k+1},y_0)\in \mbox{{\bf
Ab}}_{(1-\frac{1}{2}\partial_x^2)p_{k,j}(x,y)}$. So, we have
 \begin{equation*}
    \begin{array}{ll}
      \Phi_{p_{k,j}}^x(x_0,y_0)=\Phi_{Q_k}^{k+1}(\alpha,w_k)\leq\delta_{k+1}
    \end{array}
    \hspace{1em}
    \text{and}
    \hspace{1em}
    \begin{array}{ll}
      \Phi_{p_{k,j}}^x(x_0+\delta_{k+1},y_0)=\Phi_{Q_{k}}^{k+1}(\alpha,w_{k+1}).
    \end{array}
       \end{equation*}
Therefore, to show  $\Phi_{Q_{k+1}}^j(\alpha,w_{k+1})\leq \Phi_{Q_k}^j(\alpha,w_k)$
for $k+1<j\leq n$, it is sufficient to prove that
   \begin{equation}\label{eq:zuihou}
   \Phi^y_{(1-\frac{1}{2}\partial_x^2)p_{k,j}}(x_0+\delta_{k+1},y_0)\leq \Phi_{p_{k,j}}^y(x_0,y_0).
   \end{equation}
   According to Lemma \ref{Lemma:QuadraticBarrier},
   (\ref{eq:zuihou}) holds for $\delta_{k+1} = 1$. For the case where  $\delta_{k+1}\in(0,1)$,
   we have
\begin{equation}
\label{eq:keyeqinlemma2}
  \Phi_{p_{k,j}}^x(x_0,y_0)\leq \delta_{k+1} \leq\frac{\delta_{k+1}}{1-\delta_{k+1}^2}.
\end{equation}
Combining (\ref{eq:keyeqinlemma2}) and Lemma \ref{Lemma:QuadraticBarrier},
we arrive at the conclusion.
  \end{proof}

\section{Proof of Theorem \ref{thm:sharp}}\label{sec:schatten}
This section aims to prove Theorem \ref{thm:sharp}. We first introduce the following lemma.
\begin{lemma}
\label{lemma-2021-5-5}
Suppose that  $\{\vu_1,\ldots,\vu_n\} \subset \mathbb{C}^d$  is  a tight frame in ${\mathbb C}^d$ with frame bound $C$.  Then
$$
\bigg\|\sum\limits_{i=1}^n(\vu_i\vu_i^*)^2\bigg\|\geq C^2 \cdot \frac{d}{n}.
$$
\end{lemma}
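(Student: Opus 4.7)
The plan is to reduce the problem to a simple trace/Cauchy--Schwarz computation. The key observation is that each summand $(\vu_i\vu_i^*)^2$ has a very convenient rank-one form: since $\vu_i^*\vu_i = \|\vu_i\|^2$ is a scalar, one computes
\[
(\vu_i\vu_i^*)^2 \;=\; \vu_i(\vu_i^*\vu_i)\vu_i^* \;=\; \|\vu_i\|^2\,\vu_i\vu_i^*.
\]
Thus $\sum_{i=1}^n(\vu_i\vu_i^*)^2 = \sum_{i=1}^n \|\vu_i\|^2\,\vu_i\vu_i^*$ is a positive semidefinite matrix, and its spectral norm equals its largest eigenvalue.

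Next I would use the standard bound that, for any positive semidefinite $d\times d$ matrix $\mathbf{M}$, the spectral norm dominates the average eigenvalue: $\|\mathbf{M}\| \geq \tr(\mathbf{M})/d$. Applying this to $\mathbf{M} = \sum_i \|\vu_i\|^2 \vu_i\vu_i^*$ gives
\[
\bigg\|\sum_{i=1}^n(\vu_i\vu_i^*)^2\bigg\| \;\geq\; \frac{1}{d}\sum_{i=1}^n \|\vu_i\|^4.
\]
Now Cauchy--Schwarz (applied to the vectors $(\|\vu_1\|^2,\ldots,\|\vu_n\|^2)$ and $(1,\ldots,1)$) yields $\sum_{i=1}^n \|\vu_i\|^4 \geq \frac{1}{n}\bigl(\sum_{i=1}^n\|\vu_i\|^2\bigr)^2$.

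Finally, I would evaluate $\sum_{i=1}^n \|\vu_i\|^2$ using the tight frame hypothesis: taking traces on both sides of $\sum_{i=1}^n \vu_i\vu_i^* = C\mathbf{I}$ gives $\sum_{i=1}^n \|\vu_i\|^2 = \tr(C\mathbf{I}) = Cd$. Substituting back yields the desired bound
\[
\bigg\|\sum_{i=1}^n(\vu_i\vu_i^*)^2\bigg\| \;\geq\; \frac{1}{d}\cdot\frac{(Cd)^2}{n} \;=\; C^2\cdot\frac{d}{n}.
\]

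There is really no serious obstacle here; the proof is a three-line chain (rank-one identity, norm $\geq$ average eigenvalue, Cauchy--Schwarz), and the tight frame hypothesis enters only through the easy trace computation $\sum_i\|\vu_i\|^2 = Cd$. The only thing worth emphasizing is the first step, since the identity $(\vu\vu^*)^2 = \|\vu\|^2\,\vu\vu^*$ is what makes the whole summation a non-negative combination of rank-one projectors and unlocks the trace lower bound.
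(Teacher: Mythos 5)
Your proposal is correct and follows essentially the same route as the paper's own proof: the identity $(\vu_i\vu_i^*)^2=\|\vu_i\|^2\vu_i\vu_i^*$ to compute the trace as $\sum_i\|\vu_i\|^4$, Cauchy--Schwarz to bound this below by $\frac{1}{n}\bigl(\sum_i\|\vu_i\|^2\bigr)^2=\frac{C^2d^2}{n}$, and the bound $\|\cdot\|\geq \mathrm{Tr}(\cdot)/d$ for positive semidefinite matrices. No gaps; nothing further is needed.
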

\begin{proof} Since $\sum\limits_{i=1}^n\vu_i\vu_i^*=C\cdot\mathbf{I}$, we have
\begin{equation}\label{proof-12-12}
\sum\limits_{i=1}^n\|\vu_i\|^2=\mbox{Tr}\bigg(\sum\limits_{i=1}^n\vu_i\vu_i^*\bigg)=C\cdot d.
\end{equation}
Noting that
$$
\begin{array}{ll}
\mbox{Tr}\bigg(\sum\limits_{i=1}^n(\vu_i\vu_i^*)^2\bigg)&=
\mbox{Tr}\bigg(\sum\limits_{i=1}^n\|\vu_i\|^2\vu_i\vu_i^*\bigg)\\
&=\sum\limits_{i=1}^n \|\vu_i\|^4\\
&\geq \frac{1}{n}\cdot \bigg(\sum\limits_{i=1}^n \|\vu_i\|^2\bigg)^2\\
&= C^2\cdot \frac{d^2}{n},
\end{array}
$$
where the inequality follows from Cauchy-Schwarz inequality. 
Hence
$$
\bigg\|\sum\limits_{i=1}^n(\vu_i\vu_i^*)^2\bigg\|=
\lambda_{\max}\bigg(\sum\limits_{i=1}^n(\vu_i\vu_i^*)^2\bigg)\geq \frac{1}{d}{\mbox{Tr}\big(\sum\limits_{i=1}^n(\vu_i\vu_i^*)^2\big)}\geq C^2\cdot\frac{d}{n},
$$
where $\lambda_{\max}$ denotes the largest eigenvalue of a  matrix.
\end{proof}


We also need the following lemma.
\begin{lemma}
	\label{lemma-lower-321-1}
Suppose that $\vu_1,\ldots,\vu_n\in\mathbb{C}^d$  satisfy $\sum\limits_{i=1}^n \vu_i\vu_i^* = \frac{n}{d}\cdot \mathbf{I}.$ 
If $n\leq 2d-1$, then for any $\varepsilon_1,\ldots,\varepsilon_n\in\{-1,1\}$, we have
	$$
	\bigg\|\sum\limits_{i=1}^n \varepsilon_i \vu_i\vu_i^*\bigg\|=\frac{n}{d}.
	$$
\end{lemma}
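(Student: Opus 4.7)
The plan is to split the $\pm 1$ combination into its positive and negative parts and exploit the tight-frame identity. Given signs $\varepsilon_1,\ldots,\varepsilon_n\in\{\pm 1\}$, let
$$
S^+ := \{i : \varepsilon_i = +1\}, \qquad S^- := \{i : \varepsilon_i = -1\},
$$
and set $P := \sum_{i\in S^+}\vu_i\vu_i^*$ and $N := \sum_{i\in S^-}\vu_i\vu_i^*$, both of which are positive semidefinite. Writing $M := \sum_{i=1}^n \varepsilon_i \vu_i\vu_i^* = P - N$ and using the tight-frame identity $P + N = \frac{n}{d}\mathbf{I}$, we obtain
$$
M \;=\; 2P - \frac{n}{d}\mathbf{I}.
$$

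From $0 \preceq P \preceq \frac{n}{d}\mathbf{I}$, setting $a := \lambda_{\max}(P)$ and $b := \lambda_{\min}(P)$, both $a$ and $b$ lie in $[0, n/d]$, so every eigenvalue of $M$ lies in $[-n/d,\, n/d]$ and hence $\|M\|\leq n/d$. For the matching lower bound $\|M\|\geq n/d$, it suffices to show that either $a = n/d$ or $b = 0$, since in either case one of $|2a - n/d|$, $|2b - n/d|$ equals $n/d$.

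The final step is a pigeonhole argument based on the hypothesis $n\leq 2d-1$. Since $|S^+| + |S^-| = n \leq 2d - 1 < 2d$, at least one of $|S^+|,\,|S^-|$ is strictly less than $d$. If $|S^+|< d$, then $P$ is a sum of fewer than $d$ rank-one matrices on $\mathbb{C}^d$, so $\operatorname{rank}(P)<d$ and therefore $b = \lambda_{\min}(P) = 0$. If instead $|S^-|<d$, the same reasoning gives $\lambda_{\min}(N) = 0$, and from $P = \frac{n}{d}\mathbf{I} - N$ this forces $a = \lambda_{\max}(P) = n/d$. In either case $\|M\| = n/d$, as desired.

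There is no real obstacle here; the heart of the argument is noticing that the tight-frame identity turns the signed sum into an affine function of $P$ alone, after which the pigeonhole on $|S^+|,|S^-|$ forces one of the extremal eigenvalues of $P$ to sit at the appropriate endpoint of $[0, n/d]$. The bound $n\leq 2d-1$ is used exactly once, and sharply.
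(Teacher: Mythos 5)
Your proof is correct and follows essentially the same route as the paper: both reduce the signed sum to an affine function of one of the two partial sums via the tight-frame identity, bound it between $\pm\frac{n}{d}\mathbf{I}$, and use $n\leq 2d-1$ to force the smaller partial sum to be rank-deficient, pinning an eigenvalue at $\pm\frac{n}{d}$. The only cosmetic difference is that the paper passes WLOG to the smaller index set, while you handle the two pigeonhole cases explicitly.
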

\begin{proof}
To prove the conclusion, it is enough  to show that for any partition $\cS_1,\cS_2$ of $\{1,\ldots,n\}$, we have
	$$
	\bigg\|\sum\limits_{i\in \cS_2}  \vu_i\vu_i^*-\sum\limits_{j\in \cS_1}  \vu_j\vu_j^*\bigg\|=\frac{n}{d}.
	$$
	Without loss of generality, we assume $|\cS_1|\leq|\cS_2|$. The  $n\leq 2d-1$ implies that $|\cS_1|<d$.
	Since
	$$\mbox{rank}\bigg(\sum\limits_{j\in \cS_1}  \vu_j\vu_j^*\bigg)\leq |\cS_1|<d \ \ \mbox{and} \ \ 0\preceq 2\cdot\sum\limits_{j\in \cS_1}  \vu_j\vu_j^*\preceq 2\cdot\frac{n}{d}\mathbf{I},$$
there exists a unitary matrix $\mathbf{U}\in\mathbb{C}^{d\times d}$ such that
	$$
	\mathbf{U}^*\bigg(2\cdot\sum\limits_{j\in \cS_1}  \vu_j\vu_j^*\bigg) \mathbf{U}=\left(
	\begin{array}{ccccc}
		\lambda_1 & 0 & \cdots & 0 & 0 \\
		0 & \lambda_2 & \cdots & 0 & 0 \\
		\vdots& \vdots & \ddots & \vdots & \vdots \\
		0 & 0 & \cdots & \lambda_{d-1} & 0 \\
		0 & 0 & \cdots & 0 & 0 \\
	\end{array}
	\right),
	$$
	where $0\leq \lambda_i\leq 2\cdot\frac{n}{d},i=1,\ldots,d-1$. Hence
	$$
\frac{n}{d}	\cdot\mathbf{I}-2\cdot\sum\limits_{j\in \cS_1}  \vu_j\vu_j^*=\mathbf{U}\left(
	\begin{array}{ccccc}
		\frac{n}{d}-\lambda_1 & 0 & \cdots & 0 & 0 \\
		0 & \frac{n}{d}-\lambda_2 & \cdots & 0 & 0 \\
		\vdots& \vdots & \ddots & \vdots & \vdots \\
		0 & 0 & \cdots & \frac{n}{d}-\lambda_{d-1} & 0 \\
		0 & 0 & \cdots & 0 & \frac{n}{d} \\
	\end{array}
	\right)\mathbf{U}^*.
	$$
	Since $-\frac{n}{d}\leq \frac{n}{d}-\lambda_i\leq \frac{n}{d},i=1,\ldots,d-1$, we obtain that
	$$
	\bigg\|\sum\limits_{i\in \cS_2}  \vu_i\vu_i^*-\sum\limits_{j\in \cS_1}  \vu_j\vu_j^*\bigg\|=	\bigg\|\frac{n}{d}\cdot\mathbf{I}-2\cdot\sum\limits_{j\in \cS_1}  \vu_j\vu_j^*\bigg\|=\frac{n}{d}.
	$$
\end{proof}

Now we are ready to prove Theorem \ref{thm:sharp}.
\begin{proof}[Proof of Theorem \ref{thm:sharp}]
	Let $\{\vu_1,\ldots,\vu_n\}\subset\mathbb{C}^{d}$ be a tight frame with frame bound $C$.
	Since $\sum\limits_{i=1}^n\vu_i\vu_i^*=C\cdot\mathbf{I}$,  for any $\varepsilon_1,\ldots,\varepsilon_n\in\{-1,1\}$, we have
	$$
	- C \cdot \mathbf{I} \preceq \sum\limits_{i=1}^n\varepsilon_i\vu_i\vu_i^*\preceq C\cdot \mathbf{I}.
	$$
	Hence for any $\varepsilon_1,\ldots,\varepsilon_n\in\{-1,1\}$,
	$$
	\bigg\|\sum\limits_{i=1}^n\varepsilon_i\vu_i\vu_i^*\bigg\|\leq C,
	$$
	which implies
	$$
	\Disc(\mathbf{u}_1\mathbf{u}_1^*,\ldots,\mathbf{u}_n\mathbf{u}_n^*;\xi_1,\ldots,\xi_n)\leq C \leq \sqrt{\frac{n}{d}}\cdot \sqrt{\bigg\|\sum\limits_{i=1}^n(\vu_i\vu_i^*)^2\bigg\|}= \sqrt{\frac{n}{d}}\cdot\sigma.
	$$
We obtain (\ref{eq:tf}). Here, we use   Lemma \ref{lemma-2021-5-5}.

If $\{\vu_1,\ldots,\vu_n\}\subset\mathbb{C}^{d}$ forms a unit-norm tight frame and $d\leq n\leq 2d-1$, 
then we have
	$$
	\sigma^2=\bigg\|\sum\limits_{i=1}^n (\vu_i\vu_i^*)^2\bigg\|=\bigg\|\sum\limits_{i=1}^n \|\vu_i\|^2\vu_i\vu_i^*\bigg\|=\bigg\| \sum\limits_{i=1}^n \vu_i\vu_i^*\bigg\|=\frac{n}{d}.
	$$
 Lemma \ref{lemma-lower-321-1} implies  that for any  $\varepsilon_1,\ldots,\varepsilon_n\in\{-1,1\}$,
	$$
	\bigg\|\sum\limits_{i=1}^n \varepsilon_i \vu_i\vu_i^*\bigg\|=\frac{n}{d}=\sqrt{\frac{n}{d}}\cdot\sigma,
	$$
	which implies (\ref{eq:utf}).
\end{proof}

\section{Matrix discrepancy corresponding  to Schatten $p$-norm.}\label{sec:conc}
We end this paper by extending  matrix discrepancy to  a more generalized form, i.e.,  matrix discrepancy corresponding to Schatten $p$-norm. Recall that  the Schatten $p$-norm of a matrix $\mathbf{A}\in\mathbb{C}^{d\times d}$ is defined as
\[
\|\mathbf{A}\|_p = \bigg(\sum_{i=1}^d s_i(\mathbf{A})^p\bigg)^{1/p},
\]
where $s_i(\mathbf{A}), i = 1,\ldots,d$ are the singular values of $\mathbf{A}\in\mathbb{C}^{d\times d}$.
\begin{definition}\label{def:mdp}
	Let  $\mathbf{A}_1,\ldots,\mathbf{A}_n\in\mathbb{C}^{d\times d}$ and  $\xi_1,\ldots,\xi_n$ be independent random variables. Let $\cS_j$ be the support of $\xi_j$ for $1\leq j\leq n$. We define the matrix discrepancy of
	 matrices $\mathbf{A}_1,\ldots,\mathbf{A}_n$ and random variables $\xi_1,\ldots,\xi_n$  corresponding  to Schatten $p$-norm as
		$$
	\Disc_p(\mathbf{A}_1,\ldots,\mathbf{A}_n; \xi_1,\ldots,\xi_n)\,\,:=\,\,\min_{\varepsilon_1\in \cS_1,\ldots,\varepsilon_n\in \cS_n}\bigg\| \sum_{j=1}^n \varepsilon_j\mathbf{A}_j-\sum_{j=1}^n\E[\xi_j]\mathbf{A}_j \bigg\|_p.
	$$
\end{definition}
 	Note that the main results in this paper correspond the case of $p = \infty$ in Definition \ref{def:mdp}.
	For $ p\in[2, \infty)$, an upper bound  of matrix discrepancy corresponding to  Schatten $p$-norm can be derived by  the matrix Khintchine inequality. For more extensions and applications of the matrix Khintchine inequality, we refer the reader to \cite{MJCFT2014,Tro18,ivanov2020approximation}.
	\begin{lemma}[\cite{MJCFT2014}, Theorem 7.1]
		\label{lemma-MJC2014}
		Suppose that $2\leq p<\infty$. Consider a finite sequence $\{\mathbf{Y}_k\}_{k\geq1}$ of independent random Hermitian matrices  for which
		$
		\mathbb{E} \mathbf{Y}_k=\mathbf{0}.
		$
		Then
		$$
		\mathbb{E}\bigg\|\sum\limits_k \mathbf{Y}_k\bigg\|_{p}^{p}\leq \bigg(\frac{p-1}{2}\bigg)^{p/2}\cdot \mathbb{E}\bigg\|\bigg(\sum\limits_k\big(\mathbf{Y}_k^2+\mathbb{E}\mathbf{Y}_k^2)\bigg)^{1/2}\bigg\|_p^p.
		$$
	\end{lemma}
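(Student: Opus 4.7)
The plan is to view this as a matrix Rosenthal-type inequality and derive it through symmetrization followed by the sharp noncommutative Khintchine inequality for matrix Rademacher sums, using operator monotonicity to replace random variances by their expectations. This route gives a clean bound but loses a multiplicative constant; recovering the exact prefactor $((p-1)/2)^{p/2}$ requires the matrix Stein / exchangeable-pair framework, which I will also sketch.

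First I would symmetrize: let $\{\mathbf{Y}'_k\}$ be an independent copy of $\{\mathbf{Y}_k\}$ and let $\{\varepsilon_k\}$ be an independent Rademacher sequence. Since $\E\mathbf{Y}'_k=\mathbf{0}$, convexity of $\|\cdot\|_p^p$ with conditional Jensen, together with the fact that $\mathbf{Y}_k-\mathbf{Y}'_k$ is symmetric in distribution, gives
\[
\E\bigg\|\sum_k \mathbf{Y}_k\bigg\|_p^p \,=\, \E\bigg\|\sum_k\big(\mathbf{Y}_k-\E\mathbf{Y}'_k\big)\bigg\|_p^p \,\leq\, \E\bigg\|\sum_k \varepsilon_k(\mathbf{Y}_k-\mathbf{Y}'_k)\bigg\|_p^p.
\]
Conditioning on $\{\mathbf{Y}_k,\mathbf{Y}'_k\}$ and applying the Lust-Piquard-Buchholz noncommutative Khintchine inequality to the Hermitian Rademacher sum with $\mathbf{X}_k=\mathbf{Y}_k-\mathbf{Y}'_k$ yields
\[
\E_\varepsilon\bigg\|\sum_k \varepsilon_k \mathbf{X}_k\bigg\|_p^p \,\leq\, (p-1)^{p/2}\bigg\|\bigg(\sum_k \mathbf{X}_k^2\bigg)^{\!1/2}\bigg\|_p^p.
\]

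Next I would use the elementary operator identity $2(A^2+B^2)-(A-B)^2=(A+B)^2\succeq 0$ termwise, so that $\sum_k (\mathbf{Y}_k-\mathbf{Y}'_k)^2 \preceq 2\sum_k(\mathbf{Y}_k^2+(\mathbf{Y}'_k)^2)$. Operator monotonicity of $x\mapsto x^{1/2}$ together with Schatten-$p$ monotonicity on the positive-semidefinite cone then gives
\[
\bigg\|\bigg(\sum_k \mathbf{X}_k^2\bigg)^{\!1/2}\bigg\|_p \,\leq\, \sqrt{2}\,\bigg\|\bigg(\sum_k\big(\mathbf{Y}_k^2+(\mathbf{Y}'_k)^2\big)\bigg)^{\!1/2}\bigg\|_p.
\]
Finally, averaging over the primed copy via the operator Jensen inequality (Hansen-Pedersen) for the concave function $x\mapsto x^{1/2}$ replaces each $(\mathbf{Y}'_k)^2$ inside the square root by $\E\mathbf{Y}_k^2$, giving the desired right-hand side up to a constant.

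The main obstacle is sharpness of the constant. Naive symmetrization and Khintchine as above produce a prefactor of order $(2(p-1))^{p/2}$, off by a factor $4^{p/2}$ from the claimed $((p-1)/2)^{p/2}$. To obtain the exact constant one should instead follow the Mackey-Jordan-Chen-Farrell-Tropp program: build an exchangeable pair $(\mathbf{Z},\mathbf{Z}')$ with $\mathbf{Z}=\sum_k\mathbf{Y}_k$ by resampling a uniformly chosen summand, derive a matrix Stein identity $\E[\,(\mathbf{Z}-\mathbf{Z}')\otimes f(\mathbf{Z}+\mathbf{Z}')\,]=0$, and iterate the resulting Dirichlet-form bound at the level of the trace moment $\E\Tr\mathbf{Z}^{p}$ using the Kemperman-Pisier moment recursion. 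The factor $1/2$ in $(p-1)/2$ emerges precisely because the exchangeable-pair variance proxy $\tfrac12\E[(\mathbf{Z}-\mathbf{Z}')^2\mid\mathbf{Z}]$ directly reproduces $\sum_k\mathbf{Y}_k^2$ in expectation, sparing the lossy factor of $2$ inherent to the symmetrization approach; verifying this identification carefully, and controlling the accompanying $\E\mathbf{Y}_k^2$ term that arises from centering the pair, is the step I expect to be most delicate.
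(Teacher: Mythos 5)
This lemma is not proved in the paper at all: it is quoted verbatim from the cited reference [MJC$^+$14, Theorem 7.1], so the paper's ``proof'' is the citation itself. Your attempt therefore has to stand on its own, and as written it has two genuine gaps. First, the final averaging step is in the wrong direction. For $p\geq 2$ the map $\mathbf{M}\mapsto \big\|(\mathbf{A}+\mathbf{M})^{1/2}\big\|_p^p=\Tr\,(\mathbf{A}+\mathbf{M})^{p/2}$ is \emph{convex} on the positive semidefinite cone, so Jensen gives $\E_{\mathbf{Y}'}\Tr\big(\mathbf{A}+\sum_k(\mathbf{Y}'_k)^2\big)^{p/2}\geq \Tr\big(\mathbf{A}+\sum_k\E\mathbf{Y}_k^2\big)^{p/2}$, i.e.\ replacing $(\mathbf{Y}'_k)^2$ by $\E\mathbf{Y}_k^2$ can only \emph{lower} the quantity you are trying to bound from above. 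The Hansen--Pedersen operator Jensen inequality for $x\mapsto x^{1/2}$ controls $\big\|\E_{\mathbf{Y}'}(\cdot)^{1/2}\big\|_p^p$, which lies on the wrong side of $\E_{\mathbf{Y}'}\big\|(\cdot)^{1/2}\big\|_p^p$, so it does not rescue the step. One can patch this (e.g.\ via $\Tr(\mathbf{A}+\mathbf{B})^{p/2}\leq 2^{p/2-1}(\Tr \mathbf{A}^{p/2}+\Tr \mathbf{B}^{p/2})$ and the monotonicity $\big\|(\sum_k\mathbf{Y}_k^2)^{1/2}\big\|_p\leq\big\|(\sum_k(\mathbf{Y}_k^2+\E\mathbf{Y}_k^2))^{1/2}\big\|_p$), but only at the cost of yet another constant.

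Second, and more fundamentally, the statement carries the specific prefactor $\big(\tfrac{p-1}{2}\big)^{p/2}$, and you concede that symmetrization plus noncommutative Khintchine cannot reach it (it gives at best order $(2(p-1))^{p/2}$). The only route you offer to the sharp constant --- the exchangeable-pair / matrix Stein identity program --- is a description of the strategy of the cited Mackey--Jordan--Chen--Farrell--Tropp paper, with none of its substantive steps carried out: the construction of the matrix Stein pair, the mean value trace inequality used to handle $\E\Tr\,\mathbf{Z}^{p}$, the moment recursion, and the identification of the conditional variance proxy with $\tfrac12\sum_k(\mathbf{Y}_k^2+\E\mathbf{Y}_k^2)$ are all asserted rather than proved. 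So the proposal establishes at most a weaker inequality with a larger constant, not the lemma as stated; for the purposes of this paper, citing [MJC$^+$14, Theorem 7.1] (as the authors do) is the correct and complete justification, and a self-contained proof would require reproducing that paper's exchangeable-pairs argument in full.
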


We can  employ Lemma \ref{lemma-MJC2014} to obtain the following result.

\begin{corollary}
\label{main-thm-schatten-norm}
Suppose that $2\leq p<\infty$ and $\xi_1,\ldots,\xi_n$ are independent scalar random variables with finite support.
Let $\mathbf{A}_1,\ldots,\mathbf{A}_n\in\mathbb{C}^{d\times d}$ be Hermitian matrices.
Then
$$
\Disc_p(\mathbf{A}_1,\ldots,\mathbf{A}_n; \xi_1,\ldots,\xi_n)\leq \sqrt{\frac{p-1}{2}}\cdot \bigg(\mathbb{E}\bigg\|\bigg(\sum\limits_{i=1}^n\big((\xi_i-\mathbb{E}[\xi_i])^2\mathbf{A}_i^2+({\bf Var}[\xi_i]\mathbf{A}_i)^2\big)\bigg)^{1/2}\bigg\|^p_p\bigg)^{1/p}.
$$
\end{corollary}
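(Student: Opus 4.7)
The plan is to apply the matrix Khintchine inequality (Lemma~\ref{lemma-MJC2014}) directly to the centered random matrices $\mathbf{Y}_i := (\xi_i - \mathbb{E}[\xi_i])\mathbf{A}_i$ for $i = 1,\ldots,n$. Each $\mathbf{Y}_i$ is Hermitian (since $\mathbf{A}_i$ is Hermitian and $\xi_i$ is a real scalar) and satisfies $\mathbb{E}\mathbf{Y}_i = \mathbf{0}$; moreover the $\mathbf{Y}_i$ inherit independence from the $\xi_i$, so the hypotheses of Lemma~\ref{lemma-MJC2014} are met verbatim.

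First I would reduce the combinatorial quantity $\Disc_p$ to a probabilistic moment. Since the minimum in the definition of $\Disc_p$ is taken over $\cS_1 \times \cdots \times \cS_n$, which is precisely the support of the joint distribution of $(\xi_1,\ldots,\xi_n)$, the deterministic minimum is bounded by any average of the random variable, so
\[
\Disc_p(\mathbf{A}_1,\ldots,\mathbf{A}_n;\xi_1,\ldots,\xi_n) \,\leq\, \mathbb{E}\bigg\|\sum_{i=1}^n \mathbf{Y}_i\bigg\|_p \,\leq\, \bigg(\mathbb{E}\bigg\|\sum_{i=1}^n \mathbf{Y}_i\bigg\|_p^p\bigg)^{1/p},
\]
where the second inequality is Jensen's inequality applied to the convex map $t \mapsto t^p$ on $[0,\infty)$ for $p \geq 1$.

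Second, I would apply Lemma~\ref{lemma-MJC2014} to the right-hand side to obtain
\[
\bigg(\mathbb{E}\bigg\|\sum_{i=1}^n \mathbf{Y}_i\bigg\|_p^p\bigg)^{1/p} \,\leq\, \sqrt{\frac{p-1}{2}}\cdot \bigg(\mathbb{E}\bigg\|\bigg(\sum_{i=1}^n \big(\mathbf{Y}_i^2 + \mathbb{E}\mathbf{Y}_i^2\big)\bigg)^{1/2}\bigg\|_p^p\bigg)^{1/p}.
\]
A direct calculation then gives $\mathbf{Y}_i^2 = (\xi_i - \mathbb{E}[\xi_i])^2 \mathbf{A}_i^2$ and $\mathbb{E}\mathbf{Y}_i^2 = \mathbf{Var}[\xi_i]\,\mathbf{A}_i^2$; substituting these into the displayed bound and combining with the first step yields the claim.

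The argument presents no substantial technical obstacle, as the heavy lifting is packaged inside Lemma~\ref{lemma-MJC2014}. The only point requiring mild care is the initial probabilistic-method reduction: one must observe that the feasible set $\cS_1 \times \cdots \times \cS_n$ over which the minimum in $\Disc_p$ is taken coincides with the joint support of $(\xi_1,\ldots,\xi_n)$, which allows the deterministic minimum to be controlled by the $L^p$-moment via the chain $\min \leq \mathbb{E}[\,\cdot\,] \leq (\mathbb{E}[\,\cdot\,^p])^{1/p}$.
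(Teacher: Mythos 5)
Your proposal is correct and takes essentially the same route as the paper: both reduce $\Disc_p$ to the $p$-th moment via the probabilistic method (minimum $\leq$ expectation, whether phrased through Jensen or through $\|\cdot\|_p^p$ directly) and then apply Lemma~\ref{lemma-MJC2014} to $\mathbf{Y}_i=(\xi_i-\mathbb{E}[\xi_i])\mathbf{A}_i$. One small remark: as you correctly compute, the lemma yields $\mathbb{E}\mathbf{Y}_i^2=\mathbf{Var}[\xi_i]\,\mathbf{A}_i^2$, whereas the corollary (and the paper's own proof) writes $(\mathbf{Var}[\xi_i]\mathbf{A}_i)^2$; this is a notational slip in the paper, and your substitution is the accurate reading of the bound.
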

\begin{proof}
By using Lemma \ref{lemma-MJC2014} with $\mathbf{Y}_i=(\xi_i-\mathbb{E}[\xi_i])\mathbf{A}_i$ for $i\in[n]$, we have
$$
\mathop{\mathbb{E}}\limits_{\xi_1,\ldots,\xi_n}\bigg\|\sum\limits_{i=1}^n(\xi_i-\mathbb{E}[\xi_i])\mathbf{A}_i\bigg\|_p^p
			\leq \bigg(\frac{p-1}{2}\bigg)^{p/2}\cdot\mathop{\mathbb{E}}\limits_{\xi_1,\ldots,\xi_n}\bigg\|\bigg(\sum\limits_{i=1}^n\big((\xi_i-\mathbb{E}[\xi_i])^2\mathbf{A}_i^2+({\bf Var}[\xi_i]\mathbf{A}_i)^2\big)\bigg)^{1/2}\bigg\|^p_p.
$$
By the nonnegativity of the Schatten $p$-norm and the definition of the expectation, we know that there exists a choice of $\varepsilon_1,\ldots,\varepsilon_n$  in the support of $\xi_1,\ldots,\xi_n$ such that
		$$
		\bigg\|\sum\limits_{i=1}^n\big(\varepsilon_i-\mathbb{E}[\xi_i]\big)\mathbf{A}_i\bigg\|_p^p\leq \mathop{\mathbb{E}}\limits_{\xi_1,\ldots,\xi_n}\bigg\|\sum\limits_{i=1}^n(\xi_i-\mathbb{E}[\xi_i])\mathbf{A}_i\bigg\|_p^p.
		$$
		Then we have
		$$
		\Disc_p(\mathbf{A}_1,\ldots,\mathbf{A}_n; \xi_1,\ldots,\xi_n)\leq \sqrt{\frac{p-1}{2}}\cdot \bigg(	\mathop{\mathbb{E}}\bigg\|\bigg(\sum\limits_{i=1}^n\big((\xi_i-\mathbb{E}[\xi_i])^2\mathbf{A}_i^2+({\bf Var}[\xi_i]\mathbf{A}_i)^2\big)\bigg)^{1/2}\bigg\|^p_p\bigg)^{1/p}.
		$$
\end{proof}

Particularly, we have the following result for  independent Rademacher random variables.
\begin{corollary}\label{cor:kh1}
	Suppose that $2\leq p< \infty$ and $\xi_1,\ldots,\xi_n$ are independent  Rademacher random variables. Let $\mathbf{A}_1,\ldots,\mathbf{A}_n\in\mathbb{C}^{d\times d}$ be Hermitian matrices and
	$$
	\sigma= \bigg\|\bigg(\sum\limits_{i=1}^n \mathbf{A}_i^2 \bigg)^{\frac{1}{2}}\bigg\|_p.
	$$
	Then
	$$
	\Disc_p(\mathbf{A}_1,\ldots,\mathbf{A}_n; \xi_1,\ldots,\xi_n)\leq \sqrt{p-1}\cdot \sigma.
	$$
\end{corollary}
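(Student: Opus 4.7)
The plan is to specialize Corollary \ref{main-thm-schatten-norm} directly to the Rademacher setting; because $\pm 1$-valued variables are mean-zero with unit variance, several quantities appearing in the general bound collapse, and the outer expectation vanishes entirely, leaving a clean deterministic expression.

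Concretely, first I would substitute $\mathbb{E}[\xi_i]=0$ and $\mathbf{Var}[\xi_i]=1$ into the bound of Corollary \ref{main-thm-schatten-norm}. Since $\xi_i\in\{-1,+1\}$ almost surely, $(\xi_i-\mathbb{E}[\xi_i])^2=\xi_i^2=1$ holds pointwise, so $(\xi_i-\mathbb{E}[\xi_i])^2\mathbf{A}_i^2=\mathbf{A}_i^2$. Likewise, $(\mathbf{Var}[\xi_i]\mathbf{A}_i)^2=\mathbf{A}_i^2$. Adding the two contributions inside the sum produces $2\mathbf{A}_i^2$ term-by-term, so the random matrix inside the $p$-norm reduces to the deterministic matrix $2\sum_{i=1}^n\mathbf{A}_i^2$.

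Next, pulling the scalar outside the square root gives $\bigl(2\sum_i\mathbf{A}_i^2\bigr)^{1/2}=\sqrt{2}\bigl(\sum_i\mathbf{A}_i^2\bigr)^{1/2}$, which extracts a factor of $\sqrt{2}$ from the Schatten $p$-norm. Because the argument is now deterministic, the expectation is trivial and the $p$-th root cancels the $p$-th power. Multiplying by the prefactor $\sqrt{(p-1)/2}$ yields $\sqrt{(p-1)/2}\cdot\sqrt{2}\cdot\|(\sum_{i=1}^n\mathbf{A}_i^2)^{1/2}\|_p=\sqrt{p-1}\cdot\sigma$, which is exactly the claimed inequality.

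There is essentially no obstacle: the result is an immediate specialization of Corollary \ref{main-thm-schatten-norm}, and the only observation needed is that for Rademacher variables the "deviation" term $(\xi_i-\mathbb{E}[\xi_i])^2\mathbf{A}_i^2$ and the "variance" term $(\mathbf{Var}[\xi_i]\mathbf{A}_i)^2$ contribute equally and deterministically, so no further Khintchine-type manipulation is required.
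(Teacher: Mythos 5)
Your proposal is correct and is essentially identical to the paper's own proof: both specialize Corollary \ref{main-thm-schatten-norm} by observing that for Rademacher variables $(\xi_i-\mathbb{E}[\xi_i])^2=1$ and $\mathbf{Var}[\xi_i]=1$, so the random matrix inside the norm collapses to the deterministic $2\sum_{i=1}^n\mathbf{A}_i^2$, and the factor $\sqrt{2}$ combines with $\sqrt{(p-1)/2}$ to give $\sqrt{p-1}\cdot\sigma$. No gaps.
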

\begin{proof}
Since  for every $i=1,\ldots,n$, we have
 $$(\xi_i-\mathbb{E}[\xi_i])^2=1\ \ \mbox{and} \ \ {\bf Var}[\xi_i]=1,$$
 which derives
 $$
 \sum\limits_{i=1}^n\big((\xi_i-\mathbb{E}[\xi_i])^2\mathbf{A}_i^2+({\bf Var}[\xi_i]\mathbf{A}_i)^2\big)=2\cdot\sum\limits_{i=1}^n\mathbf{A}_i^2.
 $$
 Then by Corollary \ref{main-thm-schatten-norm}, we have this corollary.
\end{proof}

If $p=2$, i.e. the Frobenius norm,  we can get the following result.  We use $\|\cdot\|_F$ to denote the Frobenius norm to avoid confusion.
\begin{corollary}\label{cor:kh2}
	Suppose that $\xi_1,\ldots,\xi_n$ are independent scalar random variables with finite support.
	Let $\mathbf{A}_1,\ldots,\mathbf{A}_n\in\mathbb{C}^{d\times d}$ be Hermitian matrices and
	$$
	\sigma=\bigg\|\bigg(\sum\limits_{i=1}^n \big(\mbox{\bf Var}[\xi_i]\mathbf{A}_i\big)^2\bigg)^{\frac{1}{2}}\bigg\|_F.
	$$
	Then
	$$
	\Disc_2(\mathbf{A}_1,\ldots,\mathbf{A}_n; \xi_1,\ldots,\xi_n)\leq \sigma.
	$$
\end{corollary}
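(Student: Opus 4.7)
The plan is to prove this Frobenius-norm case by a direct second-moment argument, which is considerably simpler than the Schatten-$p$ result of Corollary \ref{main-thm-schatten-norm}: unlike the spectral or Schatten-$p$ norm (for $p>2$), the Frobenius norm squared $\|\cdot\|_F^2$ is an exactly quadratic functional of the matrix entries, so the probabilistic method goes through with an explicit expectation computation and without any appeal to Lemma \ref{lemma-MJC2014}.

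Concretely, I would introduce the centered random matrix
\[
\mathbf{Y} \,:=\, \sum_{i=1}^n (\xi_i-\mathbb{E}[\xi_i])\mathbf{A}_i,
\]
which is Hermitian because each $\mathbf{A}_i$ is. Expanding the Frobenius norm squared as a trace, $\|\mathbf{Y}\|_F^2 = \operatorname{Tr}(\mathbf{Y}^2)$, and using the bilinearity of trace I get
\[
\|\mathbf{Y}\|_F^2 \,=\, \sum_{i,j=1}^n (\xi_i-\mathbb{E}[\xi_i])(\xi_j-\mathbb{E}[\xi_j])\operatorname{Tr}(\mathbf{A}_i\mathbf{A}_j).
\]
Now take expectations over $\xi_1,\ldots,\xi_n$: because the $\xi_i$ are independent and each summand $\xi_i-\mathbb{E}[\xi_i]$ has mean zero, the off-diagonal terms vanish and only the $i=j$ terms survive, giving
\[
\mathbb{E}\|\mathbf{Y}\|_F^2 \,=\, \sum_{i=1}^n \mathbf{Var}[\xi_i]\,\operatorname{Tr}(\mathbf{A}_i^2) \,=\, \operatorname{Tr}\!\Bigl(\sum_{i=1}^n \mathbf{Var}[\xi_i]\mathbf{A}_i^2\Bigr) \,=\, \sigma^2,
\]
where the last identity is the statement $\|\mathbf{X}^{1/2}\|_F^2 = \operatorname{Tr}(\mathbf{X})$ for positive semidefinite $\mathbf{X}$, applied to $\mathbf{X} = \sum_i (\mathbf{Var}[\xi_i]\mathbf{A}_i)^2$ as written in the statement (matching the notational convention already used in Corollary \ref{main-thm-schatten-norm}, where $(\mathbf{Var}[\xi_i]\mathbf{A}_i)^2$ stands for the matrix second-moment contribution $\mathbf{Var}[\xi_i]\mathbf{A}_i^2$).

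The proof then finishes by the standard averaging principle: since $\|\mathbf{Y}\|_F^2 \ge 0$ almost surely and has expected value at most $\sigma^2$, there must exist at least one atom $(\varepsilon_1,\ldots,\varepsilon_n) \in \mathcal{S}_1\times\cdots\times\mathcal{S}_n$ of the joint distribution for which
\[
\Bigl\|\sum_{i=1}^n (\varepsilon_i-\mathbb{E}[\xi_i])\mathbf{A}_i\Bigr\|_F^2 \,\le\, \sigma^2.
\]
Taking square roots and comparing with Definition \ref{def:mdp} yields the desired bound $\Disc_2(\mathbf{A}_1,\ldots,\mathbf{A}_n;\xi_1,\ldots,\xi_n)\le \sigma$. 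I do not foresee any genuine obstacle: the only step that requires a moment of care is ensuring the independence-based cancellation of cross terms, which is elementary and relies only on the assumed independence of the $\xi_i$ (no additional hypothesis such as symmetry is needed, since centering handles the mean).
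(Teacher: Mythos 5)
Your proof is correct, but it takes a genuinely different and more elementary route than the paper. The paper obtains this corollary as the $p=2$ specialization of Corollary \ref{main-thm-schatten-norm}, i.e.\ ultimately of the matrix Khintchine inequality (Lemma \ref{lemma-MJC2014}): the constant $\sqrt{(p-1)/2}=1/\sqrt{2}$ combines with the factor $2$ arising from $\mathbb{E}\big[(\xi_i-\mathbb{E}[\xi_i])^2\mathbf{A}_i^2\big]+\mathbf{Var}[\xi_i]\mathbf{A}_i^2$ to give exactly $\sigma$. You instead exploit the fact that $\|\cdot\|_F^2$ is an exact quadratic functional: expanding $\operatorname{Tr}(\mathbf{Y}^2)$, killing the cross terms by independence, and averaging gives $\mathbb{E}\|\mathbf{Y}\|_F^2=\operatorname{Tr}\big(\sum_i\mathbf{Var}[\xi_i]\mathbf{A}_i^2\big)=\sigma^2$ with no inequality beyond the probabilistic method. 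This buys self-containedness (no appeal to the exchangeable-pairs machinery behind Lemma \ref{lemma-MJC2014}) and makes transparent that the bound is an identity at the level of second moments; the paper's route buys economy, reusing one Schatten-$p$ statement uniformly. Your remark on notation is also the right call: the statement's expression $(\mathbf{Var}[\xi_i]\mathbf{A}_i)^2$ must be read as $\mathbf{Var}[\xi_i]\,\mathbf{A}_i^2$ (as the paper's own expectation computation implicitly does, since $\mathbb{E}[(\xi_i-\mathbb{E}[\xi_i])^2\mathbf{A}_i^2]=\mathbf{Var}[\xi_i]\mathbf{A}_i^2$); under the literal reading $(\mathbf{Var}[\xi_i])^2\mathbf{A}_i^2$ the claimed bound would fail already for $n=d=1$ with a small-variance two-point variable, so flagging and adopting that convention is not optional but necessary, and your proof matches the intended statement.
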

\begin{proof} By using Corollary \ref{main-thm-schatten-norm} and noting that
$$
\begin{array}{ll}
	&\mathop{\mathbb{E}}\limits_{\xi_1,\ldots,\xi_n}\bigg\|\bigg(\sum\limits_{i=1}^n\big((\xi_i-\mathbb{E}[\xi_i])^2\mathbf{A}_i^2+({\bf Var}[\xi_i]\mathbf{A}_i)^2\big)\bigg)^{\frac{1}{2}}\bigg\|^2_F
	\\
	&=\mathop{\mathbb{E}}\limits_{\xi_1,\ldots,\xi_n}\mbox{Tr}\bigg(\sum\limits_{i=1}^n\big((\xi_i-\mathbb{E}[\xi_i])^2\mathbf{A}_i^2+({\bf Var}[\xi_i]\mathbf{A}_i)^2\big)\bigg)\\
	&=\mbox{Tr}\mathop{\mathbb{E}}\limits_{\xi_1,\ldots,\xi_n}\bigg(\sum\limits_{i=1}^n\big((\xi_i-\mathbb{E}[\xi_i])^2\mathbf{A}_i^2+({\bf Var}[\xi_i]\mathbf{A}_i)^2\big)\bigg)\\
	&=2\cdot\mbox{Tr}\bigg(\sum\limits_{i=1}^n({\bf Var}[\xi_i]\mathbf{A}_i)^2\bigg)\\
	&=2\cdot\bigg\|\bigg(\sum\limits_{i=1}^n \big(\mbox{\bf Var}[\xi_i]\mathbf{A}_i\big)^2\bigg)^{\frac{1}{2}}\bigg\|_F^2,
\end{array}
$$
we have this corollary.
\end{proof}

\bibliographystyle{alpha}

\end{document}